\documentclass[english]{scrartcl}

\usepackage[a4paper,left=2.1cm, right=2.1cm, top=2.1cm, bottom=2.1cm]{geometry}
\usepackage{amsmath}
\usepackage{amssymb}
\usepackage{enumitem}
\usepackage[colorlinks = true, linkcolor = blue, citecolor = blue]{hyperref}
\usepackage{amsthm}
\usepackage{cite}
\usepackage[nameinlink]{cleveref}
\usepackage{tikz}
\usepackage{graphicx}
\usepackage{color}
\usepackage[caption=false]{subfig}

\newtheorem{theorem}{Theorem}[section]
\newtheorem{definition}[theorem]{Definition}
\newtheorem{remark}[theorem]{Remark}
\newtheorem{proposition}[theorem]{Proposition}
\newtheorem{lemma}[theorem]{Lemma}
\newtheorem{corollary}[theorem]{Corollary}
\newtheorem{example}[theorem]{Example}

\Crefname{theorem}{Theorem}{Theorems}
\Crefname{definition}{Definition}{Definitions}
\Crefname{remark}{Remark}{Remarks}
\Crefname{proposition}{Proposition}{Propositions}
\Crefname{lemma}{Lemma}{Lemmas}
\Crefname{corollary}{Corollary}{Corollaries}
\Crefname{example}{Example}{Examples}
\crefname{equation}{}{}
\crefname{figure}{Figure}{Figures}

\title{Stabilization of Control-Affine Systems by Local Approximations of Trajectories\footnote{A preliminary version \cite{Suttner2017} of this work has appeared in the Proceedings of the 20th IFAC World Congress, 2017.}
}
\author{
  Raik Suttner\thanks{Institute of Mathematics, University of Wuerzburg, Germany (\texttt{raik.suttner@mathematik.uni-wuerzburg.de}).}
}
\date{}

\begin{document}
\maketitle

\begin{abstract}
We study convergence and stability properties of control-affine systems. Our considerations are motivated by the problem of stabilizing a control-affine system by means of output feedback for states in which the output function attains an extreme value. Following a recently introduced approach to extremum seeking control, we obtain access to the ascent and descent directions of the output function by approximating Lie brackets of suitably defined vector fields. This approximation is based on convergence properties of trajectories of control-affine systems under certain sequences of open-loop controls. We show that a suitable notion of convergence for the sequences of open-loop controls ensures local uniform convergence of the corresponding sequences of flows. We also show how boundedness properties of the control vector fields influence the quality of the approximation. Our convergence results are sufficiently strong to derive conditions under which the proposed output feedback control law induces exponential stability. We also apply this control law to the problem of purely distance-based formation control for non\-holonomic multi-agent systems.
\end{abstract}
%
%
%
%
\section{Introduction}
Consider a driftless control-affine system
\begin{align}
\dot{x} & \ = \ \sum_{k=1}^{p}u_k\,g_k(x), \label{eq:intro:01} \allowdisplaybreaks \\
y & \ = \ \psi(x) \nonumber
\end{align}
on a smooth manifold~$M$ with a real-valued output~$y$. We assume that the control vector fields $g_1,\ldots,g_p$ are smooth, and that the output~$y$ is given by a smooth function~$\psi$ on~$M$. Suppose that we are interested in an output feedback control law for the real-valued input channels~$u_1,\ldots,u_p$ that stabilizes the system around states where~$\psi$ attains an extreme value. Without loss of generality, we restrict our attention to the minima of~$\psi$. In this case,~$\psi$ can be interpreted as a cost function that assigns every system state to a real number. Because we deal with a control-affine system, a natural attempt is to choose each input~$u_k$ in such a way that, at every point~$x$ of~$M$, the tangent vector~$u_kg_k(x)$ points into a descent direction of~$\psi$. The system is then constantly driven into a descent direction of~$\psi$. This can be accomplished by defining each input~$u_k$ at every point~$x$ of~$M$ as the negative directional derivative of~$\psi$ along~$g_k(x)$. In other words, the approach is to choose~$u_k$ as the negative Lie derivative of the function~$\psi$ along the vector field~$g_k$. We denote the Lie derivative of~$\psi$ along~$g_k$ at any point~$x$ of~$M$ by $(g_k\psi)(x)$. However, this control law is not output feedback because its implementation requires information about the Lie derivatives~$g_k\psi$ in any state~$x$ of the system. In our setup, both~$g_k\psi$ and~$x$ are treated as unknown quantities.

The problem of stabilizing~\cref{eq:intro:01} at minima of~$\psi$ by means of output feedback has been studied extensively in the literature on extremum seeking control. In its most general form, the purpose of extremum seeking control is not restricted to control-affine systems with real-valued outputs, but is aimed at optimizing the output of a general non\-linear control system without any information about the model or the current system state. For an overview of different attempts and strategies on obtaining extremum seeking control, the reader is referred to ~\cite{ZhangBook,LiuBook,ScheinkerBook}. A universal solution to this problem is not known.

The content of the present paper is motivated by an approach to extremum seeking control that was first introduced in~\cite{Duerr2010}, and then extended and improved in various subsequent works such as \cite{Duerr2013,Duerr2014,Duerr2015,Scheinker2013,Scheinker20132,Scheinker2014}. As explained above, the negative Lie derivatives~$-g_k\psi$ are promising candidates for the inputs~$u_k$ to steer~\cref{eq:intro:01} into a descent direction of~$\psi$, but they do not fall into the class of output feedback. Nonetheless, there are ways to obtain access to descent directions $-(g_k\psi)(x)g_k(x)$ of~$\psi$ by means of output feedback. For this purpose, we consider vector fields as derivations on the algebra of smooth functions. By doing so, one can define the Lie bracket of two smooth vector fields~$f_1$ and~$f_2$ as the vector field~$[f_1,f_2]$ that acts on any smooth function~$\alpha$ on~$M$ as the derivation $[f_1,f_2]\alpha=f_1(f_2\alpha)-f_2(f_1\alpha)$. It is now easy to check that the vector field~$-(g_k\psi)g_k$ is the same as the Lie bracket~$[\psi{g_k},g_k]$, where~$\psi{g_k}$ denotes the vector field $x\mapsto\psi(x)g_k(x)$. Note that this choice of Lie bracket, which is due to~\cite{Duerr2010}, is not the only possible way to obtain access to the vector field $-(g_k\psi)g_k$. Another option, which is introduced in~\cite{Scheinker2014}, is the Lie bracket $[(\sin\circ\psi)g_k,(\cos\circ\psi)g_k]$. In~\cite{Scheinker20132}, the non\-smooth approach $\frac{1}{1-2a}[\psi^{1-a}g_k,\psi^{a}g_k]$ with fixed $a\in(0,1/2)$ is studied. A systematic investigation of suitable Lie brackets can be found in~\cite{Grushkovskaya2017}. As a common feature, one can recognize that these Lie brackets are of the form $[(h_{\text{s}}\circ\psi)g_k,(h_{\text{c}}\circ\psi)g_k]$ with suitably chosen functions~$h_{\text{s}},h_{\text{c}}$.

At this point, we have merely rewritten the vector fields $-(g_k\psi)g_k$ in terms of Lie brackets. However, it is not yet clear how to relate these Lie brackets to~\cref{eq:intro:01} through output feedback, which is a crucial step in the procedure. The strategy is to find time-varying output feedback for~\cref{eq:intro:01} such that the trajectories behave at least approximately like the trajectories of~\cref{eq:intro:01} under the state feedback $u_k=-g_k\psi$. For this purpose, one can exploit known convergence properties of control-affine systems under sequences of open-loop controls, which are extensively studied in~\cite{Kurzweil1987,Kurzweil19882,Kurzweil1988,Sussmann1991,Liu19972,Liu1997}. In the following, we only indicate this convergence theory through the example of system~\cref{eq:intro:01}. The general formalism is explained in the main part of the paper. Suppose that we have chosen suitable functions $h_{\text{s}},h_{\text{c}}\colon\mathbb{R}\to\mathbb{R}$ such that the Lie brackets $[(h_{\text{s}}\circ\psi)g_k,(h_{\text{c}}\circ\psi)g_k]$ coincide with the vector fields $-(g_k\psi)g_k$ of interest. For $k=1,\ldots,m$ we denote the vector fields $(h_{\text{s}}\circ\psi)g_k$ and $(h_{\text{c}}\circ\psi)g_k$ by~$f_{2k-1}$ and~$f_{2k}$, respectively. This defines~$m=2p$ vector fields on~$M$. Our goal is to approximate the solutions of the differential equation
\[
\Sigma^{\infty}\colon\qquad\dot{x} \ = \ -\sum_{k=1}^{p}(g_k\psi)(x)\,g_k(x) \ = \ \sum_{k=1}^{p}[f_{2k-1},f_{2k}](x).
\]
For each $i=1,\ldots,m$, we choose a sequence $(u^j_i)_j$ of measurable and bounded functions $u^j_i\colon\mathbb{R}\to\mathbb{R}$. Then, for every sequence index~$j$, we consider the differential equation
\[
\Sigma^j\colon\qquad\dot{x} \ = \ \sum_{i=1}^{m}u^j_i(t)\,f_i(x).
\]
Note that each~$\Sigma^j$ can be interpreted as a control-affine system with control vector fields~$f_i$ under the open-loop controls $u_i=u^j_i(t)$. In this situation, we can apply the general convergence theory, which leads to the following result: If the sequences $(u^j_i)_j$ satisfy certain convergence conditions in terms of iterated integrals, then the solutions of the~$\Sigma^j$ converge to the solutions of~$\Sigma^{\infty}$ as the sequence index~$j$ tends to~$\infty$. Clearly, the notion of convergence for the sequences $(u^j_i)_j$ has to depend somehow on the limit equation~$\Sigma^{\infty}$, but we do not go into details at this point. On the other hand, we have the notion of convergence for the trajectories. Roughly speaking, this means that for a fixed initial state and on a fixed compact time-interval, the sequence of solutions of the~$\Sigma^j$ converges uniformly to the solution of~$\Sigma^{\infty}$.

The general convergence theory in~\cite{Kurzweil1988,Liu1997} is not restricted to the example above, but can be applied for arbitrary smooth vector fields~$f_i$ in~$\Sigma^{j}$, and for the case in which iterated Lie brackets of the~$f_i$ with possibly time-varying coefficient functions appear on the right-hand side of~$\Sigma^{\infty}$. A system of this form is then usually referred to as an \emph{extended system}. It is shown in~\cite{Liu19972} that, for every extended system, one can find sequences $(u_i^j)_j$ of inputs such that the trajectories of the corresponding sequence of open-loop systems converge to the trajectories of the desired extended system. This result can be applied in the context of motion planning for non\-holonomic systems, as in~\cite{Tilbury1992}. Another application, which is also studied in the present paper, is the stabilization of control systems. The idea is that stability properties of the extended system carry over to members of the approximating sequence of systems with sufficiently large sequence index. It turns out that this strategy works quite well when the~$f_i$ are homogeneous vector fields in the Euclidean space. For this situation, the following implication is known from~\cite{Morin1999,Moreau2003}: If the extended system is asymptotically stable, then the convergence of trajectories ensures that the same is true for members of the approximating sequence of systems with sufficiently large sequence index. However, this statement is not valid when the assumption of homogeneity is dropped. One obstacle is that convergence for a fixed initial state and a fixed compact time interval is in general not strong enough to transfer stability. Instead, the convergence of the trajectories must be uniform with respect to the initial states within compact sets, and the initial time. Under the assumption that this type of convergence is present, the authors of~\cite{Moreau19994} prove the following implication: If the extended system is \emph{locally uniformly asymptotically stable} (LUAS), then the approximating sequence of systems is \emph{practically locally uniformly asymptotically stable} (PLUAS). Here, \emph{uniform} means uniform with respect to the initial time because the systems are allowed to have possibly non\-periodic time dependencies. Moreover, \emph{practical} means that we can only expect practical stability for members of the approximating sequence with sufficiently large sequence index. 

The strategy of using convergence of trajectories to transfer stability from an extended system to an approximating sequence of systems has also been applied in papers on extremum seeking control, which we cited above. To see this, we insert the sequence of output feedback control laws $u_k = u^j_{2k-1}(t)\,h_{\text{s}}(\psi(x)) + u^j_{2k}(t)\,h_{\text{c}}(\psi(x))$ for $k=1,\ldots,p$ into~\cref{eq:intro:01}, where the functions~$h_{\text{s}},h_{\text{c}}$ and the sequences $(u^j_i)$ are chosen as before. We then obtain the sequence of differential equations~$\Sigma^j$. The results in~\cite{Duerr2013} show that, if the sequences $(u^j_i)_j$ satisfy certain convergence conditions in terms of iterated integrals, which are slightly stronger than the conditions in~\cite{Kurzweil1987,Liu1997}, then the convergence of trajectories is sufficiently strong to transfer stability. In particular, this leads to the following implication: If a point~$x$ of~$M$ is locally asymptotically stable for the autonomous~$\Sigma^{\infty}$, then this point~$x$ is PLUAS for the sequence $(\Sigma^j)_j$. Note that a sufficient condition to ensure that~$x$ is locally asymptotically stable for~$\Sigma^{\infty}$ is that $\psi$ attains a strict local minimum at $x$, and that there exists a punctured neighborhood of $x$ in which $\psi$ has no critical point and the vector fields~$g_k$ span the tangent bundle. In this case, one can solve the problem of stabilizing~\cref{eq:intro:01} at a minimum of~$\psi$ by means of output feedback, at least approximately.

The present paper contributes to the above research fields in various ways. For instance, we generalize the convergence results in~\cite{Duerr2013}, which are valid for extended systems with Lie brackets of two vector fields, toward extended systems with arbitrary iterated Lie brackets on the right-hand side, in the sense of~\cite{Kurzweil1987,Liu1997}. We treat the general case of a control-affine system with drift, and allow a possible time dependence of the vector fields. Our convergence results are sufficiently strong to ensure that the approximating sequence of systems is PLUAS whenever the extended system is LUAS. Moreover, we study how the quality of the approximation is influenced by boundedness assumptions on the vector fields of the control-affine system. Under suitable boundedness assumptions, we prove the following new implication: If an extended system is \emph{locally uniformly exponentially stable} (LUES), then the members of the approximating sequence of systems with sufficiently large sequence index are LUES as well. We also study the problem of stabilizing a general control-affine system at minima of its output function by means of output feedback. Under the additional assumption that at least the minimum value of the output function is known, we present an output feedback control law that not only can induce practical stability but in fact exponential stability. Moreover, we show by the example of a non\-holonomic system that our approach has the ability to obtain access to descent directions of the output function along Lie brackets of the control vector fields. 

The remainder of this paper is organized as follows. In \Cref{sec:02}, we recall and extend the algebraic formalism from~\cite{Liu1997}, which is needed to define a suitable notion of convergence for sequences of open-loop controls. The output feedback control law, which we present in \Cref{sec:06}, is not smooth but at least locally Lipschitz continuous. For this reason, we have to define Lie derivatives and Lie brackets in a suitable way. These and other some other definitions from differential geometry are summarized in \Cref{sec:03}. In \Cref{sec:04}, we recall some known notions of stability and present convergence conditions for trajectories of sequences of time-varying systems that are sufficient for the transfer of stability. Then, in \Cref{sec:05}, we present the main convergence results for control-affine systems. In \Cref{sec:06}, we apply these results to the problem of output optimization and distanced-based formation control.


\section{Algebraic preliminaries}\label{sec:02}
We begin this section by reviewing some aspects of the algebraic formalism in~\cite{Liu1997}, which is used throughout this paper. Let $\mathbf{X}=\{X_1,\ldots,X_{m}\}$ be a finite set of $m\geq1$ non\-commuting variables (also called indeterminates). A \emph{monomial} in~$\mathbf{X}$ is a finite sequence of elements of~$\mathbf{X}$, which is usually written as a product of the form $X_I:=X_{i_1}X_{i_2}\cdots{X_{i_{\ell}}}$, where $I=(i_1,\ldots,i_{\ell})$ is a finite multi-index with $i_1,i_2,\ldots,i_{\ell}\in\{1,\ldots,m\}$. The \emph{length} $\ell$ of such a multi-index~$I$ is denoted by~$|I|$. If $I=(i_1,\ldots,i_{{\ell}})$ and $J=(j_1,\ldots,j_{p})$ are two multi-indices, then we define their \emph{concatenation product}~$IJ$ as the multi-index $(i_1,\ldots,i_{\ell},j_1,\ldots,j_{p})$ by first writing the elements of~$I$, and then those of~$J$. The concatenation product for multi-indices naturally leads to the product $X_IX_J:=X_{IJ}$ of the associated monomials. In particular, for the empty multi-index of length~$0$, we obtain the empty product in~$\mathbf{X}$, which is the unit element in the monoid of monomials. A \emph{non\-commutative polynomial} in~$\mathbf{X}$ over~$\mathbb{R}$ (or simply a \emph{polynomial}) is a linear combination over~$\mathbb{R}$ of monomials in~$\mathbf{X}$. If~$p$ is a polynomial, it can then be written as $p=\sum_{I}p_IX_I$ with unique real-valued \emph{coefficients}~$p_I$, where the sum over all multi-indices~$I$ is finite because all but finitely many of the~$p_I$ are zero. The set of all non\-commutative polynomials in~$\mathbf{X}$ over~$\mathbb{R}$ is denoted by $A(\mathbf{X})$, which has a natural structure of an associative~$\mathbb{R}$-algebra. 
Because of its universal property among all associative~$\mathbb{R}$-algebra with~$m$ generators (cf.~\cite{ReutenauerBook}), the algebra $A(\mathbf{X})$ is called the \emph{free} associative algebra generated by~$\mathbf{X}$ over~$\mathbb{R}$. For any two non\-commutative polynomials $p,q\in{A(\mathbf{X})}$, we define their Lie bracket as usual by $[p,q]:=pq-qp$. This turns $A(\mathbf{X})$ into a Lie algebra. Let $L(\mathbf{X})$ be the Lie subalgebra of $A(\mathbf{X})$ generated by the variables $X_1,\ldots,X_m$ with respect to the bracket product. The elements of $L(\mathbf{X})$ are called \emph{Lie polynomials}. One can show that $L(\mathbf{X})$ is spanned by all Lie brackets of the form
\[
[X_I] \ := \ [X_{i_1},[X_{i_2},\cdots[X_{i_{\ell-1}},X_{i_{\ell}}]\cdots]]
\]
with $I=(i_1,\ldots,i_{\ell})$ being a non\-empty multi-index with $i_1,\ldots,i_{\ell}\in\{1,\ldots,m\}$. Because of its universal property among all Lie algebras with~$m$ generators (cf.~\cite{ReutenauerBook}), the Lie algebra $L(\mathbf{X})$ is called the \emph{free} Lie algebra generated by~$\mathbf{X}$ over~$\mathbb{R}$.

Let~$\mathbf{v}$ be a map defined on~$\mathbb{R}$ with values in $A(\mathbf{X})$. We can then write $\mathbf{v}=\sum_{I}v_IX_I$ with unique \emph{coefficient functions} $v_I\colon\mathbb{R}\to\mathbb{R}$. The map~$\mathbf{v}$ is said to be \emph{bounded} (\emph{Lebesgue measurable}, \emph{continuous}, \emph{locally absolutely continuous} etc.) if all of its coefficient functions $v_I\colon\mathbb{R}\to\mathbb{R}$ are bounded (Lebesgue measurable, continuous, locally absolutely continuous, etc.). For every positive integer $r$, let $A^r_0(\mathbf{X})$ denote the subspace of polynomials $\sum_Ip_IX_I\in{A(\mathbf{X})}$ whose coefficients $p_I$ with multi-indices $I$ of length $|I|=0$ and $|I|>r$ are zero. As in~\cite{Liu1997}, we introduce the following types of $A^r_0(\mathbf{X})$-valued maps.
\begin{definition}\label{def:inputs}
Let~$r$ be a positive integer. A \emph{polynomial input of order~$\leq{r}$} is a Lebesgue measurable and bounded $A^r_0(\mathbf{X})$-valued map $\mathbf{v}$ defined on~$\mathbb{R}$; we write $\mathbf{v}=\sum_{0<|I|\leq{r}}v_IX_I$. An \emph{extended input of order~$\leq{r}$} is a polynomial input of order~$\leq{r}$, which takes values in $L(\mathbf{X})$. An \emph{ordinary input}~$\mathbf{u}$ is a polynomial input of order~$\leq{1}$; here, we write $\mathbf{u}=\sum_{i=1}^{m}u_iX_i$.
\end{definition}
When it is not important to name the indeterminates $X_1,\ldots,X_m$ explicitly, we simply write $\mathbf{v}=(v_I)_{0<|I|\leq{r}}$ instead of $\mathbf{v}=\sum_{0<|I|\leq{r}}v_IX_I$ for a polynomial input of order~$\leq{r}$, and write $\mathbf{u}=(u_i)_{i=1,\ldots,m}$ instead of $\mathbf{u}=\sum_{i=1}^{m}u_iX_i$ for an ordinary input.

The next definition is important ingredient to obtain a suitable notion of convergence for sequences of ordinary inputs. Again, we refer the reader to~\cite{Liu1997} for further explanations.
\begin{definition}\label{def:generalizedDifference}
An \emph{$r$th-order generalized difference} of an ordinary input $\mathbf{u}=\sum_{i=1}^{m}u_iX_i$ and a polynomial input $\mathbf{v}=\sum_{0<|I|\leq{r}}v_IX_I$ of order~$\leq{r}$ is a locally absolutely continuous $A^r_0(\mathbf{X})$-valued map $\mathbf{W}$ defined on~$\mathbb{R}$ that satisfies the differential equation
\begin{equation}\label{eq:GDDefinition}
\dot{\mathbf{W}} \ = \ -\mathbf{u}\mathbf{W} + \mathbf{v} - \mathbf{u}
\end{equation}
up to order~$r$, i.e., if we write $\mathbf{W}=\sum_{0<|I|\leq{r}}\widetilde{UV}\vphantom{UV}_IX_I$, the differential equations
\begin{align*}
\dot{\widetilde{UV}}\vphantom{UV}_i & \ = \ v_i - u_i \allowdisplaybreaks \\
\dot{\widetilde{UV}}\vphantom{UV}_{iI} & \ = \ v_{iI} - u_i\,\widetilde{UV}\vphantom{UV}_I
\end{align*}
are satisfied almost everywhere on $\mathbb{R}$ for $i=1,\ldots,m$ and multi-indices~$I$ of length $0<|I|<r$.
\end{definition}
\begin{remark}
As in~\cite{Liu1997,Liu19972}, we use the notation $\sum_{0<|I|\leq{r}}\widetilde{UV}\vphantom{UV}_IX_I$ for an $r$th-order generalized difference of an ordinary input $\mathbf{u}=\sum_{i=1}^{m}u_iX_i$ and a polynomial input $\mathbf{v}=\sum_{0<|I|\leq{r}}v_IX_I$ of order~$\leq{r}$. Note that for any given initial value $\mathbf{p}=\sum_{0<|I|\leq{r}}p_IX_I\in{A^r_0(\boldsymbol{X})}$ and any initial time $t_0\in\mathbb{R}$, the unique global solution of~\cref{eq:GDDefinition} with initial condition $(t_0,\mathbf{p})$ can be computed recursively by
\begin{align*}
\widetilde{UV}\vphantom{UV}_i(t) \ = \ & p_i + \int_{t_0}^{t}\big(v_i(s) - u_i(s)\big)\,\mathrm{d}s, \allowdisplaybreaks \\
\widetilde{UV}\vphantom{UV}_{iI}(t) \ = \ & p_{iI} + \int_{t_0}^{t}\big(v_{iI}(s) - u_i(s)\,W_I(s)\big)\,\mathrm{d}s
\end{align*}
with the usual convention $\int_{t_0}^{t}\alpha(s)\,\mathrm{d}s:=-\int_{t}^{t_0}\alpha(s)\,\mathrm{d}s$ for the Lebesgue integral of a locally Lebesgue integrable function $\alpha$ if $t<t_0$.
\end{remark}
Now, we define the notion of convergence for sequences of ordinary inputs, which is used in \Cref{sec:05} to prove convergence of trajectories of control systems. The conditions are slightly stronger than in~\cite{Liu1997}. This is the price that we have to pay to obtain stronger convergence results for the trajectories.
\begin{definition}\label{def:GDConvergence}
Let $(\mathbf{u}^j)_j$ be a sequence of ordinary inputs $\mathbf{u}^j=\sum_{i=1}^{m}u^j_iX_i$. Let $\mathbf{v}=\sum_{0<|I|\leq{r}}v_IX_I$ be a polynomial input of order~$\leq{r}$. We say that \emph{$(\mathbf{u}^j)_j$ GD($r$)-converges uniformly to~$\mathbf{v}$} if there exists a sequence of polynomial inputs $\mathbf{v}^j=\sum_{0<|I|\leq{r}}v^j_IX_I$ of order~$\leq{r}$, and a sequence of $r$th-order generalized differences $\sum_{0<|I|\leq{r}}\widetilde{UV}\vphantom{UV}^j_IX_I$ of~$\mathbf{u}^j$ and~$\mathbf{v}^j$ such that for all multi-indices $0<|I|\leq{r}$, the following conditions are satisfied:
\begin{enumerate}[label=c\arabic*($r$)]
	\item\label{def:GDConvergence:1} the sequence of~$v^j_I$ converges to~$v_I$ uniformly on~$\mathbb{R}$ as $j\to\infty$;
	\item\label{def:GDConvergence:2} the sequence of $\widetilde{UV}\vphantom{UV}^j_I$ converges to~$0$ uniformly on~$\mathbb{R}$ as $j\to\infty$; and
	\item\label{def:GDConvergence:3} if $|I|=r$, then, for $i=1,\ldots,m$, the sequence of $u^j_i\widetilde{UV}\vphantom{UV}^j_I$ converges to~$0$ uniformly on~$\mathbb{R}$ as $j\to\infty$,
\end{enumerate}
\end{definition}
For later reference, we cite the following result from~\cite{Liu1997}.
\begin{proposition}\label{thm:algebraicIdentity}
Suppose that the conditions~\ref{def:GDConvergence:1} and~\ref{def:GDConvergence:2} in \Cref{def:GDConvergence} are satisfied. Then, the polynomial input~$\mathbf{v}$ is in fact $L(\mathbf{X})$-valued almost everywhere on~$\mathbb{R}$, and
\[
\sum_{0<|I|\leq{r}}v_IX_I \ = \ \sum_{0<|I|\leq{r}}\frac{v_I}{|I|}\,[X_I]
\]
holds almost everywhere on~$\mathbb{R}$.
\end{proposition}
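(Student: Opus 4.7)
The assertion naturally splits into two parts: (I) the polynomial input $\mathbf{v}(t)$ takes values in $L(\mathbf{X})$ for almost every $t\in\mathbb{R}$, and (II) the displayed Dynkin-type identity. I would dispose of (II) first as a purely algebraic consequence of (I). The Dynkin--Specht--Wever theorem asserts that every homogeneous Lie polynomial $P=\sum_{|I|=n}p_IX_I$ of degree $n$ in $L(\mathbf{X})$ satisfies $nP=\sum_{|I|=n}p_I[X_I]$. Applying this identity degree by degree to $\mathbf{v}(t)$, which is Lie-valued by (I), yields the claimed formula. Hence the substance of the proof lies in (I).

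To establish (I), I would invoke Ree's (Friedrichs') criterion: an element $P=\sum_Ip_IX_I$ of $A(\mathbf{X})$ belongs to $L(\mathbf{X})$ if and only if its coefficients annihilate every non-trivial shuffle, $\sum_{K\in I\shuffle J}p_K=0$ for all non-empty multi-indices $I,J$. The goal thus reduces to verifying, for almost every $t$ and all non-empty $I,J$ with $|I|+|J|\leq r$, the shuffle identity $\sum_{K\in I\shuffle J}v_K(t)=0$. Since equalities almost everywhere are detected by integrals, it is equivalent to show $\int_{t_0}^t\sum_{K\in I\shuffle J}v_K(s)\,\mathrm{d}s=0$ for all $t,t_0\in\mathbb{R}$.

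I would proceed by induction on $k=|I|+|J|$, working in the truncated algebra $A^r_0(\mathbf{X})\oplus\mathbb{R}$. The starting observation is that in the defining relation
\[
\dot{\widetilde{UV}}\vphantom{UV}^j_{iK}\ =\ v^j_{iK}-u^j_i\,\widetilde{UV}\vphantom{UV}^j_K,
\]
one can rewrite $u^j_i = v^j_i - \dot{\widetilde{UV}}\vphantom{UV}^j_i$ (from the $|I|{=}1$ case of~\cref{eq:GDDefinition}). Integrating from $t_0$ to $t$, the left-hand side tends to $0$ by~\ref{def:GDConvergence:2}; the term $\int_{t_0}^t v^j_i(s)\widetilde{UV}\vphantom{UV}^j_K(s)\,\mathrm{d}s$ tends to $0$ as well, since $v^j_i$ is eventually uniformly bounded by~\ref{def:GDConvergence:1} and $\widetilde{UV}\vphantom{UV}^j_K\to0$ uniformly. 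What remains is $\int_{t_0}^tv^j_{iK}(s)\,\mathrm{d}s+\int_{t_0}^t\dot{\widetilde{UV}}\vphantom{UV}^j_i(s)\widetilde{UV}\vphantom{UV}^j_K(s)\,\mathrm{d}s$. Summing the analogous identities over all $K\in I\shuffle J$ and applying integration by parts, the derivative-product terms are designed to telescope into boundary values of products $\prod_\alpha\widetilde{UV}\vphantom{UV}^j_{I_\alpha}(t)$ involving strictly shorter multi-indices (whose shuffle relations have already been established, or which simply vanish by~\ref{def:GDConvergence:2}). Passing to the limit $j\to\infty$ kills every boundary term and leaves the desired shuffle identity for $v_K$. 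The case $r=2$ (where the shuffle relation reads $v_{ij}+v_{ji}=0$) is a clean one-line verification via a single integration by parts and serves as a template.

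The main obstacle is the combinatorial bookkeeping at higher degrees: one must check that the shuffle-summed integration-by-parts actually telescopes, and that condition~\ref{def:GDConvergence:3} is exactly what is needed to handle the top-order terms where the factor $u^j_i\widetilde{UV}\vphantom{UV}^j_I$ appears and cannot be regrouped by integration by parts alone. The cleanest framework is likely to reinterpret the calculation in terms of the formal time-ordered exponential $\Phi^j$ of $\mathbf{u}^j$ in the truncated algebra: its coefficients are iterated integrals of the $u^j_i$, which satisfy the shuffle relations by Chen's theorem, so $\Phi^j$ is group-like. Showing that $\Phi^j$ and an analogous series built from $\mathbf{v}^j$ differ by something controlled by $\mathbf{W}^j$ (which vanishes in the limit) transports group-likeness, and thus shuffle relations, to the limit, giving (I).
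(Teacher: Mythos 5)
The paper does not contain a proof of this proposition: it is stated as a citation from~\cite{Liu1997}, so there is no internal argument to compare against. That said, your strategy is the standard one for results of this type and is in the spirit of the cited reference: split into (I) Lie-valuedness of $\mathbf{v}(t)$ and (II) the Dynkin-type identity, settle (II) by the Dynkin--Specht--Wever theorem degree by degree, and reduce (I) to Friedrichs'/Ree's shuffle criterion, which one verifies from the generalized-difference ODE by integration-by-parts telescoping. Your $r=2$ template is correct: adding the ODEs for $\widetilde{UV}\vphantom{UV}^j_{ik}$ and $\widetilde{UV}\vphantom{UV}^j_{ki}$, substituting $u^j_i=v^j_i-\dot{\widetilde{UV}}\vphantom{UV}^j_i$, and recognizing $\dot{\widetilde{UV}}\vphantom{UV}^j_i\,\widetilde{UV}\vphantom{UV}^j_k+\widetilde{UV}\vphantom{UV}^j_i\,\dot{\widetilde{UV}}\vphantom{UV}^j_k$ as an exact derivative leaves $\int_{t_0}^t(v^j_{ik}+v^j_{ki})\,\mathrm{d}s$ plus terms that vanish uniformly as $j\to\infty$, whence $v_{ik}+v_{ki}=0$ a.e.

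One misstatement should be corrected. You suggest that condition~\ref{def:GDConvergence:3} is needed ``to handle the top-order terms where the factor $u^j_i\widetilde{UV}\vphantom{UV}^j_I$ appears and cannot be regrouped by integration by parts alone.'' The proposition assumes only~\ref{def:GDConvergence:1} and~\ref{def:GDConvergence:2}, and that is indeed sufficient. In your recursion, each substitution $u^j_i=v^j_i-\dot{\widetilde{UV}}\vphantom{UV}^j_i$ followed by an integration by parts pushes the unbounded factor $u^j_{\cdot}$ onto a product $u^j_{\cdot}\,\widetilde{UV}\vphantom{UV}^j_{I'}$ with a strictly \emph{shorter} multi-index $I'$, so the telescoping terminates at degree one, where $\dot{\widetilde{UV}}\vphantom{UV}^j_i=v^j_i-u^j_i$ has no remaining product. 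A term $u^j_i\widetilde{UV}\vphantom{UV}^j_I$ with $|I|=r$, which is what~\ref{def:GDConvergence:3} controls, would correspond to an order-$(r{+}1)$ component of the generalized difference; that component does not exist in the truncated algebra $A^r_0(\mathbf{X})$ and therefore never enters the shuffle computation --- it becomes relevant only in the trajectory estimates of \Cref{thm:IntegralExpansion,thm:convergenceResult}. Your Chen-series reformulation is a legitimate and arguably cleaner alternative, though to make it precise one should identify the group-like quantity $\Phi^j(1+\mathbf{W}^j)$ (where $\dot{\Phi}^j=\Phi^j\mathbf{u}^j$), show by Gronwall that it converges as $j\to\infty$, and then deduce primitivity of $\mathbf{v}$ by differentiating the limit.
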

\begin{remark}\label{thm:SussmannAlgorithm}
For some applications, it is important to know how to find a sequence $(\mathbf{u}^j)_j$ of ordinary inputs that GD($r$)-converges uniformly to a prescribed extended input~$\mathbf{v}$ of order~$\leq{r}$. This problem is solved in~\cite{Liu19972}. A careful analysis of the proof of the main result in~\cite{Liu19972} shows that the sequence $(\mathbf{u}^j)_j$ constructed therein satisfies the above conditions of uniform GD($r$)-convergence.
\end{remark}


\section{Time-varying functions, vector fields, and systems}\label{sec:03}
We assume that the reader is familiar with basic concepts of differential geometry, as presented in \cite{AbrahamBook,LeeBook}, for example. Let~$M$ be a \emph{smooth manifold}, i.e., a paracompact Hausdorff space endowed with an $n$-dimensional smooth structure. Throughout this paper, \emph{smooth} means~$C^{\infty}$, and the term \emph{function} is reserved for real-valued maps. The algebra of smooth functions on~$M$ is denoted by $C^{\infty}(M)$. A \emph{pseudo-distance function} on~$M$ is a non\-negative function $d\colon{M}\times{M}\to\mathbb{R}$ with the following three properties for all $x,y,z\in{M}$: (i)~$x=y$ implies $d(x,y)=0$, (ii)~$d(x,y)=d(y,x)$, and (iii)~$d(x,z)\leq{d(x,y)+d(y,z)}$. If~$d$ is a pseudo-distance function on~$M$, then for every $x\in{M}$ and every non\-empty subset~$E$ of~$M$, we let $d(x,E):=\inf_{y\in{E}}d(x,y)$ denote the pseudo-distance between~$x$ and~$E$ with respect to~$d$. A \emph{distance function} on~$M$ is a pseudo-distance function on~$M$ such that $d(x,y)=0$ implies $x=y$. We use the term \emph{distance function} rather than \emph{metric} to avoid any confusion with the notion of a Riemannian metric. A distance function~$d$ on~$M$ is said to be \emph{locally Euclidean} if for every point $x\in{M}$ there exist a smooth chart $(U,\varphi)$ for~$M$ at~$x$ and constants $L_1,L_2>0$ such that $L_1\|\varphi(z)-\varphi(y)\|\leq{d(y,z)}\leq{L_2}\|\varphi(z)-\varphi(y)\|$ for all $y,z\in{U}$. Here and in the following, $\|\cdot\|$ denotes the Euclidean norm. Note that every distance function that arises from a Riemannian metric is locally Euclidean.

Next, we recall some definitions from~\cite{Sussmann1998}. A \emph{time-varying function on~$M$} is a function with domain $\mathbb{R}\times{M}$. A \emph{Carath{\'e}odory function} on~$M$ is a time-varying function~$\beta$ on~$M$ such that (i) for every $x\in{M}$, the function $\beta(\,\cdot\,,x)\colon\mathbb{R}\to\mathbb{R}$ is Lebesgue measurable, and (ii) for every $t\in\mathbb{R}$, the function $\beta(t,\,\cdot\,)\colon{M}\to\mathbb{R}$ is continuous. Let~$\beta$ be a time-varying function on~$M$, and let~$b$ be a non\-negative function on~$M$. For a given subset~$V$ of~$M$, we say that~$\beta$ is \emph{uniformly bounded by a multiple of~$b$ on~$V$} if there is a constant $c>0$ such that $|\beta(t,x)|\leq{c\,b(x)}$ holds for every $(t,x)\in\mathbb{R}\times{V}$. We say that~$\beta$ is \emph{locally uniformly bounded by a multiple of~$b$} if for every $x\in{M}$ there is a neighborhood~$V$ of~$x$ in~$M$ such that~$\beta$ is uniformly bounded by a multiple of~$b$ on~$V$. Moreover, we simply say that~$\beta$ is \emph{locally uniformly bounded} if it is locally uniformly bounded by $b\equiv1$. Finally, we say that~$\beta$ is \emph{locally uniformly Lipschitz continuous} if for every point $x\in{M}$ there exist a smooth chart $(U,\varphi)$ for~$M$ at~$x$ and a constant $L>0$ such that $|\beta(t,z)-\beta(t,y)|\leq{L\|\varphi(z)-\varphi(y)\|}$ holds for every $t\in\mathbb{R}$ and all $y,z\in{U}$. Equivalently, using a locally Euclidean distance function~$d$ on~$M$, the function~$\beta$ is locally uniformly Lipschitz continuous if and only if for every point $x\in{M}$, there exist a neighborhood~$V$ of~$x$ in~$M$ and a constant $L>0$ such that $|\beta(t,z)-\beta(t,y)|\leq{L d(y,z)}$ for every $t\in\mathbb{R}$ and all $y,z\in{V}$. Note that all of the above definitions also apply to functions on~$M$ because every function $\alpha$ on~$M$ can be considered as the time-varying function $(t,x)\mapsto\alpha(x)$. In this case, we usually omit the adjective ``uniform'', which indicates uniformity with respect to time.

A \emph{time-varying vector field on~$M$} is a map defined on $\mathbb{R}\times{M}$ that assigns every $(t,x)\in\mathbb{R}\times{M}$ to a tangent vector to~$M$ at~$x$. We consider the tangent space at any point $x\in{M}$ as the vector space of all derivations of $C^{\infty}(M)$ at~$x$. In this sense, every time-varying vector field~$f$ on~$M$ is a time-varying differential operator on $C^{\infty}(M)$. For every $(t,x)\in\mathbb{R}\times{M}$ and every $\alpha\in{C^{\infty}(M)}$, we let $(f\alpha)(t,x)$ denote the \emph{Lie derivative} of~$\alpha$ along $f$ at $(t,x)$, i.e., the real number obtained by applying the tangent vector $f(t,x)$ to $\alpha$. Using Lie derivatives along smooth functions on~$M$, all definitions for time-varying functions can be made for time-varying vector fields. For instance,~$f$ is a \emph{Carath{\'e}odory vector field} if for every $\alpha\in{C^{\infty}(M)}$, the time-varying function $f\alpha$ is a Carath{\'e}odory function on~$M$. Using the same convention as for functions on~$M$, we associate every vector field $g$ on~$M$ with the corresponding time-varying vector field $(t,x)\mapsto{g(x)}$.

A \emph{curve} on~$M$ is a continuous map from an open, non\-empty interval into~$M$. A curve $\gamma\colon{J}\to{M}$ is said to be \emph{locally absolutely continuous} if for every $\alpha\in{C^{\infty}}(M)$, the function $\alpha\circ\gamma\colon{J}\to\mathbb{R}$ is locally absolutely continuous. If $\gamma$ is locally absolutely continuous, then it is differentiable at almost every $t\in{J}$, and we denote by $\dot{\gamma}(t)$ the tangent vector to~$M$ at $\gamma(t)$ that maps every $\alpha\in{C^{\infty}(M)}$ to $(\alpha\circ\gamma)\dot{\vphantom{.}}(t)$. Let~$f$ be a time-varying vector field on~$M$. For given $t_0\in\mathbb{R}$ and $x_0\in{M}$, an \emph{integral curve} of~$f$ with initial condition $(t_0,x_0)$ is a locally absolutely continuous curve $\gamma\colon{J}\to{M}$ with $t_0\in{J}$ and $\gamma(t_0)=x_0$ such that $\dot{\gamma}(t)=f(t,\gamma(t))$ holds for almost every $t\in{J}$. We call the formal expression $\dot{x}=f(t,x)$ a \emph{time-varying system} on~$M$. A \emph{solution} (or also \emph{trajectory}) of $\dot{x}=f(t,x)$ is an integral curve of~$f$. The system $\dot{x}=f(t,x)$ is said to have the \emph{existence property of solutions} if for every $t_0\in\mathbb{R}$ and $x_0\in{M}$ there is an integral curve of~$f$ with the initial condition $(t_0,x_0)$. A \emph{maximal solution} of $\dot{x}=f(t,x)$ is a solution that cannot be extended to a solution of $\dot{x}=f(t,x)$ defined on a strictly larger interval. The system $\dot{x}=f(t,x)$ has the \emph{uniqueness property of solutions} if, whenever $\gamma_1\colon{J_1}\to{M}$ and $\gamma_2\colon{J_2}\to{M}$ are two solutions of $\dot{x}=f(t,x)$ for which there exists $t\in{J_1\cap{J_2}}$ with $\gamma_1(t)=\gamma_2(t)$, then $\gamma_1=\gamma_2$ holds on $J_1\cap{J_2}$. Suppose that $\dot{x}=f(t,x)$ has the existence and uniqueness property of solutions. Then we define $\Phi(t,t_0,x_0):=\gamma_{t_0,x_0}(t)$ for every triple $(t,t_0,x_0)$ with $t_0\in\mathbb{R}$, $x_0\in{M}$, and $t$ in the domain of the unique maximal integral curve $\gamma_{t_0,x_0}$ of~$f$ with initial condition $(t_0,x_0)$. The corresponding map $\Phi$ is called the \emph{flow} of~$f$ (or also the \emph{flow} of $\dot{x}=f(t,x)$).

In this paper, we are also interested in Lie derivatives of not necessarily differentiable functions. We define the Lie derivative for the following situation. Suppose that~$f$ is a time-varying vector field on~$M$ with the existence and uniqueness property of integral curves, and let $\Phi$ be the flow of~$f$. Let~$\beta$ be a time-varying function on~$M$, and let $(t,x)\in\mathbb{R}\times{M}$. If the limit
\[
(f\beta)(t,x) \ := \ \lim_{s\to0}\frac{1}{s}\big(\beta\big(t,\Phi(t+s,t,x)\big)-\beta(t,x)\big)
\]
exists, then we call it the \emph{Lie derivative} of~$\beta$ along~$f$ at $(t,x)$. Note that this definition is consistent with the notion of Lie derivatives of smooth functions in terms of derivations (see, e.g., \cite{AbrahamBook}). Moreover, if the limit
\[
(\partial_t\beta)(t,x) \ := \ \lim_{s\to0}\frac{1}{s}\big(\beta(t+s,x)-\beta(t,x)\big)
\]
exists, then we call it the \emph{time derivative} of~$\beta$ at $(t,x)$. Suppose that $f,g$ are time-varying vector fields on~$M$ with the existence and uniqueness property of integral curves. Let $(t,x)\in\mathbb{R}\times{M}$. If for every $\alpha\in{C^{\infty}(M)}$, the Lie derivatives $(f(g\alpha))(t,x)$ and $(g(f\alpha))(t,x)$ exist, then
\[
([f,g]\alpha)(t,x) \ := \ (f(g\alpha))(t,x) - (g(f\alpha))(t,x)
\]
defines a tangent vector $[f,g](t,x)$ to~$M$ at $(t,x)$, which is called the \emph{Lie bracket} of $f,g$ at $(t,x)$.

Let $\pi\colon{M}\to\tilde{M}$ be a smooth map from the smooth manifold~$M$ to another smooth manifold $\tilde{M}$. A function $\alpha$ on~$M$ is said to be \emph{constant on the fibers of~$\pi$} if for all $x,y\in{M}$ with $\pi(x)=\pi(y)$ we have $\alpha(x)=\alpha(y)$. A time-varying vector field~$f$ on~$M$ is said to be \emph{$\pi$-related} to a time-varying vector field $\tilde{f}$ on $\tilde{M}$ if $\operatorname{T}_x\!\pi(f(t,x))=\tilde{f}(t,\pi(x))$ holds for all $t\in\mathbb{R}$ and $x\in{M}$, where $\operatorname{T}_x\!\pi$ denotes the \emph{tangent map} of~$\pi$ at~$x$. We say that a time-varying vector field~$f$ on~$M$ is \emph{tangent to the fibers of~$\pi$} if it is~$\pi$-related to the zero vector field on $\tilde{M}$. The map $\pi\colon{M}\to\tilde{M}$ is said to be \emph{proper} if the preimage of any compact subset of $\tilde{M}$ under~$\pi$ is a compact subset of~$M$. The map $\pi\colon{M}\to\tilde{M}$ is said to be a \emph{submersion} if at every $x\in{M}$, the tangent map $\operatorname{T}_x\!\pi$ is surjective. Under the assumption that $\pi\colon{M}\to\tilde{M}$ is a smooth surjective submersion, it is known (see~\cite{LeeBook}) that $\alpha\in{C^{\infty}(M)}$ is constant on the fibers of $\pi$ if and only if there exists $\tilde{\alpha}\in{C^{\infty}(\tilde{M})}$ such that $\alpha=\tilde{\alpha}\circ\pi$.


\section{Stability of time-varying systems}\label{sec:04}
Let~$M$ be a smooth manifold, and let~$d$ be a pseudo-distance function on~$M$. Throughout this section, we let $(\Sigma^j)_j$ be a sequence of time-varying systems $\Sigma^j$ on~$M$. We assume that each $\Sigma^j$ has the existence and uniqueness property of solutions. For every sequence index~$j$, let $\Phi^j$ be the flow of $\Sigma^j$. On the other hand, let $\Sigma^{\infty}$ be another time-varying system on~$M$ with the existence and uniqueness property of solutions, which will play the role of a ``limit system'' with respect to the sequence $(\Sigma^{j})_j$. Let $\Phi^{\infty}$ be the flow of $\Sigma^{\infty}$.
\begin{remark}
The choice of a sequence index~$j$ as parameter is motivated by the notation in \cite{Kurzweil1987,Sussmann1991,Liu1997,Liu19972}. We note that all results in the present paper remain valid when the notion of a sequence is replaced by a \emph{net}, as it is done in~\cite{Liu1999}. In particular, the sequence index~$j$ may be replaced by a small parameter $\varepsilon>0$ that tends to $0$ as in \cite{Kurzweil1988,Morin1999,Moreau19994,Moreau2003}, or by a frequency parameter $\omega>0$ that tends to $\infty$ as in~\cite{Duerr2013,Duerr2014,Scheinker2013,Scheinker2014}.
\end{remark}

\subsection{Notions of stability}
For a single system like $\Sigma^{\infty}$, we first recall well-known notions of asymptotic and exponential stability with respect to a nonempty subset $E$ of $M$., see, e.g., \cite{MichelBook}.
\begin{definition}\label{def:LUAS}
The set $E$ is said to be \emph{locally uniformly asymptotically stable} (abbreviated \emph{LUAS}) for $\Sigma^{\infty}$ if
\begin{enumerate}[label=(\roman*)]
	\item\label{item:LUAS:stability}~$E$ is \emph{uniformly stable} for $\Sigma^{\infty}$, i.e., for every $\varepsilon>0$ there is $\delta>0$ such that for every $t_0\in\mathbb{R}$ and every $x_0\in{M}$ with $d(x_0,E)\leq\delta$, the trajectory $\Phi^{\infty}(\cdot,t_0,x_0)$ exists on $[t_0,\infty)$ with $d(\Phi^{\infty}(t,t_0,x_0),E)\leq\varepsilon$ for every $t\geq{t_0}$; and if
	\item\label{item:LUAS:attraction}~$E$ is \emph{locally uniformly attractive} for $\Sigma^{\infty}$, i.e., there is some $\delta>0$ with the following property: for every $\varepsilon>0$ there is $T>0$ such that for every $t_0\in\mathbb{R}$ and every $x_0\in{M}$ with $d(x_0,E)\leq\delta$, the trajectory $\Phi^{\infty}(\cdot,t_0,x_0)$ exists on $[t_0,\infty)$ with $d(\Phi^{\infty}(t,t_0,x_0),E)\leq\varepsilon$ for every $t\geq{t_0+T}$.
\end{enumerate}
\end{definition}
\begin{definition}\label{def:LUES}
The set $E$ is said to be \emph{locally uniformly exponentially stable} (abbreviated \emph{LUES}) for $\Sigma^{\infty}$ if there are $\delta,\lambda,\mu>0$ such that for every $t_0\in\mathbb{R}$ and every $x_0\in{M}$ with $d(x_0,E)\leq\delta$, the trajectory $\Phi^{\infty}(\cdot,t_0,x_0)$ exists on $[t_0,\infty)$ with $d(\Phi^{\infty}(t,t_0,x_0),E)\leq\lambda\,d(x_0,E)\,\mathrm{e}^{-\mu(t-t_0)}$ for every $t\geq{t_0}$.
\end{definition}

For sequences of time-varying systems there is a weaker notions of asymptotic stability, which is due to~\cite{Moreau19994}.
\begin{definition}\label{def:PLUAS}
The set $E$ is said to be \emph{practically locally uniformly asymptotically stable} (abbreviated \emph{PLUAS}) for $(\Sigma^{j})_j$ if
\begin{enumerate}[label=(\roman*)]
	\item\label{item:PLUAS:stability}~$E$ is \emph{practically uniformly stable} for $(\Sigma^{j})_j$, i.e., for every $\varepsilon>0$ there are $\delta>0$ and a sequence index $j_0$ such that for every $j\geq{j_0}$, every $t_0\in\mathbb{R}$, and every $x_0\in{M}$ with $d(x_0,E)\leq\delta$, the trajectory $\Phi^{j}(\cdot,t_0,x_0)$ exists on $[t_0,\infty)$ with $d(\Phi^{j}(t,t_0,x_0),E)\leq\varepsilon$ for every $t\geq{t_0}$; and if
	\item\label{item:PLUAS:attraction}~$E$ is \emph{practically locally uniformly attractive} for $(\Sigma^{j})_j$, i.e., there is some $\delta>0$ with the following property: for every $\varepsilon>0$ there are $T>0$ and a sequence index $j_0$ such that for every $j\geq{j_0}$, every $t_0\in\mathbb{R}$, and every $x_0\in{M}$ with $d(x_0,E)\leq\delta$, the trajectory $\Phi^{j}(\cdot,t_0,x_0)$ exists on $[t_0,\infty)$ with $d(\Phi^{j}(t,t_0,x_0),E)\leq\varepsilon$ for every $t\geq{t_0+T}$.
\end{enumerate}
\end{definition}
The set $E$ is PLUAS for $(\Sigma^j)_j$, then this ensures that the trajectories of $(\Sigma^j)_j$ are locally attracted into a prescribed $\varepsilon$-neighborhood of~$E$ as long as~$j$ is sufficiently large. However, in general, it is not known how large~$j$ has to be chosen for a given $\varepsilon>0$. In the present paper, we prove a stronger notion of stability for a sequence of systems, namely the following.
\begin{definition}
For a given sequence index $j_0$, we say that the set $E$ is \emph{LUES} for $(\Sigma^j)_{j\geq{j_0}}$ if there are $\delta,\lambda,\mu>0$ such that for every $j\geq{j_0}$, every $t_0\in\mathbb{R}$, and every $x_0\in{M}$ with $d(x_0,E)\leq\delta$, the trajectory $\Phi^{j}(\cdot,t_0,x_0)$ exists on $[t_0,\infty)$ with $d(\Phi^{j}(t,t_0,x_0),E)\leq\lambda\,d(x_0,E)\,\mathrm{e}^{-\mu(t-t_0)}$ for every $t\geq{t_0}$.
\end{definition}

\subsection{Convergence-based transfer of stability}
Our aim is to carry over stability properties of $\Sigma^{\infty}$ to members of the sequence $(\Sigma^j)_j$. It is shown in several earlier works, such as~\cite{Morin1999,Moreau2003,Duerr2013}, that this can be done by means of convergence of trajectories. For this reason, we introduce the following notion of convergence.
\begin{definition}\label{def:convergence}
Let $S$ be a subset of~$M$, and let~$b$ be a non\-negative function on~$M$. We say that \emph{the trajectories of $(\Sigma^j)_j$ converge uniformly in $S$ on compact time intervals with respect to $(d,b)$ to the trajectories of $\Sigma^{\infty}$} if for every $\varepsilon>0$ and every $T>0$ there exists a sequence index $j_0$ such that the following \emph{approximation property} holds: whenever $t_0\in\mathbb{R}$ and $x_0\in{S}$ are such that $\Phi^{\infty}(\cdot,t_0,x_0)$ exists on $[t_0,t_0+T]$ with $\Phi^{\infty}(t,t_0,x_0)\in{S}$ for every $t\in[t_0,t_0+T]$, then, for every $j\geq{j_0}$, also $\Phi^{j}(\cdot,t_0,x_0)$ exists on $[t_0,t_0+T]$ and
\[
d\big(\Phi^{j}(t,t_0,x_0), \Phi^{\infty}(t,t_0,x_0)\big) \ \leq \ \varepsilon\,b(x_0)
\]
holds for every $t\in[t_0,t_0+T]$.
\end{definition}
In particular, when the bound~$b$ is identically equal to $1$ on~$M$, then~\Cref{def:convergence} coincides with the notion of convergence in~\cite{Moreau19994,Moreau2000,Duerr2013}. If the bound~$b$ is chosen as the pseudo-distance to a subset of~$M$, then~\Cref{def:convergence} corresponds to the approximation property in~\cite{Moreau20002,Moreau2003}. Using the same strategy as in these papers, it is easy to derive the subsequent two propositions. We omit the proofs here. In the following, let $E$ be a nonempty subset of $M$.
\begin{proposition}\label{thm:LUASToPLUAS}
Suppose that $E$ is LUAS for $\Sigma^{\infty}$. Define $b(x):=1$ for every $x\in{M}$. Suppose that the trajectories of $(\Sigma^j)_j$ converge uniformly in some $\delta$-neighbor\-hood of~$E$ on compact time intervals with respect to $(d,b)$ to the trajectories of $\Sigma^{\infty}$. Then~$E$ is PLUAS for $(\Sigma^j)_j$.
\end{proposition}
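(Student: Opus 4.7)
The plan is to verify the two clauses of \Cref{def:PLUAS} by combining LUAS of $\Sigma^{\infty}$ with the convergence hypothesis over a single finite time window, and then iterate the bound indefinitely by means of the flow semigroup property. Let $\delta_{\mathrm{c}} > 0$ denote the radius of a neighborhood $S = \{x \in M : d(x,E) < \delta_{\mathrm{c}}\}$ in which the trajectory convergence holds, and let $\delta_{\mathrm{a}} > 0$ denote the attractivity radius supplied by \Cref{def:LUAS}\ref{item:LUAS:attraction}. The fundamental inequality is the triangle inequality
\[
d(\Phi^{j}(t,t_0,x_0), E) \ \leq \ d(\Phi^{\infty}(t,t_0,x_0), E) + d(\Phi^{j}(t,t_0,x_0), \Phi^{\infty}(t,t_0,x_0)),
\]
which reduces each bound for the $\Sigma^j$-trajectory to a bound for the $\Sigma^{\infty}$-trajectory plus an approximation error controlled by \Cref{def:convergence}.

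For the stability clause, fix $\varepsilon > 0$, which we may assume to satisfy $\varepsilon \leq \delta_{\mathrm{c}}$. Uniform stability of $\Sigma^{\infty}$ produces $\delta_1 \in (0, \varepsilon/2]$ such that $\Sigma^{\infty}$-trajectories starting in the $\delta_1$-neighborhood of $E$ remain in the $(\varepsilon/2)$-tube, and then $\delta_2 \in (0, \delta_1/2]$ such that trajectories starting in $\delta_2$ stay inside $\delta_1/2$. Set $\delta' := \min(\delta_2, \delta_{\mathrm{a}})$; note that $\delta' \leq \delta_1 \leq \delta_{\mathrm{c}}$. Local uniform attractivity of $\Sigma^{\infty}$ at level $\delta'/2$ supplies a time $T_0 > 0$ such that $d(\Phi^{\infty}(t_0+T_0,t_0,x_0), E) \leq \delta'/2$ whenever $d(x_0,E) \leq \delta'$. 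Because the $\Sigma^{\infty}$-trajectory stays inside $S$ on $[t_0, t_0+T_0]$, \Cref{def:convergence} applied with accuracy $\delta'/2$ and horizon $T_0$ yields an index $j_0$ such that, for every $j \geq j_0$, every $t_0 \in \mathbb{R}$, and every $x_0$ with $d(x_0,E) \leq \delta'$, we have $d(\Phi^{j}(t,t_0,x_0), \Phi^{\infty}(t,t_0,x_0)) \leq \delta'/2 \leq \varepsilon/2$ on the window. The triangle inequality then gives $d(\Phi^{j}(t,t_0,x_0), E) \leq \varepsilon$ on $[t_0, t_0+T_0]$ and, at the endpoint, $d(\Phi^{j}(t_0+T_0,t_0,x_0), E) \leq \delta'$. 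The semigroup identity $\Phi^{j}(t, t_0, x_0) = \Phi^{j}(t, t_0+T_0, \Phi^{j}(t_0+T_0, t_0, x_0))$, valid by the uniqueness property, lets us repeat the argument on $[t_0+kT_0, t_0+(k+1)T_0]$ for all $k \geq 0$, using the \emph{same} $j_0$ each time since \Cref{def:convergence} is uniform in the initial time. This yields practical uniform stability.

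Practical local uniform attractivity follows by combining attractivity of $\Sigma^{\infty}$ with the stability established above. Given $\varepsilon > 0$, take the pair $(\delta', j_0)$ produced in the previous step. Apply local uniform attractivity of $\Sigma^{\infty}$ at level $\delta'/2$ to obtain $T_1 > 0$, and apply \Cref{def:convergence} on $[0, T_1]$ with accuracy $\delta'/2$ to obtain $j_1 \geq j_0$. Then for $j \geq j_1$ and $d(x_0,E) \leq \min(\delta_{\mathrm{a}}, \delta_{\mathrm{c}})$, the triangle inequality gives $d(\Phi^{j}(t_0+T_1, t_0, x_0), E) \leq \delta'$, and the already-proved practical stability propagates this into $d(\Phi^{j}(t,t_0,x_0), E) \leq \varepsilon$ for every $t \geq t_0+T_1$, as required.

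The only real obstacle is the radius bookkeeping: \Cref{def:convergence} is applicable only while the $\Sigma^{\infty}$-trajectory stays inside the neighborhood $S$, which forces the preliminary reduction $\varepsilon \leq \delta_{\mathrm{c}}$ and the nested chain $\delta_2 \leq \delta_1/2 \leq \varepsilon/4$ chosen above. Beyond this, the induction over infinitely many windows is legitimate precisely because the convergence hypothesis and LUAS are both uniform in the initial time, so $j_0$ need not grow as the iteration proceeds.
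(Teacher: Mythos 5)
The overall strategy — triangle inequality over one finite window, then iterate by the flow semigroup property using uniformity in the initial time — is exactly the standard approach the paper attributes to \cite{Moreau19994,Moreau2000,Moreau2003,Duerr2013}, and your treatment of the stability clause is correct, including the careful nesting $\delta_2 \le \delta_1/2 \le \varepsilon/4 \le \delta_{\mathrm c}/4$ which keeps $\Phi^{\infty}$ inside $S$ on each window so that \Cref{def:convergence} applies. However, there is a genuine gap in the practical attractivity clause.

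You fix the radius for practical attractivity as $\min(\delta_{\mathrm a}, \delta_{\mathrm c})$ and then apply \Cref{def:convergence} on $[t_0, t_0 + T_1]$. But the hypothesis of the approximation property in \Cref{def:convergence} requires $\Phi^{\infty}(\cdot,t_0,x_0)$ to remain inside $S$ on the whole interval $[t_0, t_0 + T_1]$, and neither $d(x_0,E) \le \delta_{\mathrm a}$ nor $d(x_0,E) \le \delta_{\mathrm c}$ gives this. Local uniform attractivity controls the trajectory only after time $T$ and says nothing about transient excursions before that; uniform stability does give a bound on $[t_0,\infty)$, but only if the initial distance is shrunk to a stability radius matching the target tube $\delta_{\mathrm c}$ — which is precisely what you do in the stability clause but omit here. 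The fix is small: let $\delta_{\mathrm s}>0$ be the stability radius from \Cref{def:LUAS}\ref{item:LUAS:stability} at level $\delta_{\mathrm c}$, and take the practical attraction radius to be $\min(\delta_{\mathrm a}, \delta_{\mathrm s})$ rather than $\min(\delta_{\mathrm a},\delta_{\mathrm c})$. Since $\delta_{\mathrm a}$ and $\delta_{\mathrm s}$ are both independent of $\varepsilon$, this still respects the quantifier order in \Cref{def:PLUAS}\ref{item:PLUAS:attraction}, and then $\Phi^{\infty}$ genuinely stays in $S$ on $[t_0, t_0+T_1]$, so \Cref{def:convergence} applies and the rest of your argument goes through unchanged.
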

%
\begin{proposition}\label{thm:LUESToLUES}
Suppose that $E$ is LUES for $\Sigma^{\infty}$. Define $b(x):=d(x,E)$ for every $x\in{M}$. Suppose that the trajectories of $(\Sigma^j)_j$ converge uniformly in some $\delta$-neighborhood of~$E$ on compact time intervals with respect to $(d,b)$ to the trajectories of $\Sigma^{\infty}$. Then there exists a sequence index $j_0$ such that~$E$ is LUES for $(\Sigma^j)_{j\geq{j_0}}$.
\end{proposition}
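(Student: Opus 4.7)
The plan is to combine the exponential decay provided by the LUES hypothesis on $\Sigma^{\infty}$ with the trajectory approximation on a fixed short horizon, and then iterate using the cocycle property of $\Phi^j$. Let $\delta_*,\lambda,\mu>0$ be the LUES constants for $\Sigma^{\infty}$, and let $\delta_c>0$ be the radius of the neighborhood $S$ of $E$ on which the convergence hypothesis is assumed to hold. First I would fix a time step $T>0$ satisfying $\lambda\,\mathrm{e}^{-\mu T}\leq 1/4$ and then invoke \Cref{def:convergence} with this $T$ and with $\varepsilon:=1/4$ to obtain a sequence index $j_0$.

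Next I would set $\delta':=\min\{\delta_*,\delta_c/\lambda\}$, so that every $x_0$ with $d(x_0,E)\leq\delta'$ forces $\Phi^{\infty}(\,\cdot\,,t_0,x_0)$ to remain inside $S$ on $[t_0,\infty)$ by the LUES bound. For any such $x_0$ and any $j\geq j_0$, the approximation property then guarantees that $\Phi^j(\,\cdot\,,t_0,x_0)$ exists on $[t_0,t_0+T]$ with error at most $(1/4)\,d(x_0,E)$, so the triangle inequality gives
\[
d\bigl(\Phi^j(t_0+T,t_0,x_0),E\bigr)\ \leq\ \bigl(1/4+\lambda\,\mathrm{e}^{-\mu T}\bigr)\,d(x_0,E)\ \leq\ q\,d(x_0,E),\qquad q:=1/2.
\]
Since $q<1$, the new endpoint $x_1:=\Phi^j(t_0+T,t_0,x_0)$ again satisfies $d(x_1,E)\leq\delta'$. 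Invoking the flow identity $\Phi^j(t,t_0,x_0)=\Phi^j(t,t_0+T,x_1)$ and iterating inductively yields $d(x_k,E)\leq q^{k}d(x_0,E)$ for $x_k:=\Phi^j(t_0+kT,t_0,x_0)$, together with the intermediate bound $d(\Phi^j(t,t_0,x_0),E)\leq(1/4+\lambda)\,d(x_k,E)$ for $t\in[t_0+kT,t_0+(k+1)T]$, again via the triangle inequality applied to $\Phi^j(t,t_0+kT,x_k)$ and $\Phi^{\infty}(t,t_0+kT,x_k)$. Setting $\mu':=-\log(q)/T$ and $\lambda':=(1/4+\lambda)/q$ converts the discrete-time geometric decay into the continuous-time estimate $d(\Phi^j(t,t_0,x_0),E)\leq\lambda'\,d(x_0,E)\,\mathrm{e}^{-\mu'(t-t_0)}$ for all $t\geq t_0$, which is LUES with constants uniform in $j\geq j_0$.

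The main obstacle I anticipate is coordinating the two radii $\delta_*$ and $\delta_c$ carefully, so that each iterate $x_k$ lies simultaneously inside the LUES domain of $\Sigma^{\infty}$ and inside the neighborhood in which convergence is assumed; the choice $\delta'\leq\delta_c/\lambda$ together with the strict contraction $q<1$ are what make the induction close uniformly in $k$. Everything else reduces to the triangle inequality, the cocycle property of the flows, and the standard passage from geometric to exponential decay.
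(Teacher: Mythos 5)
Your proposal is correct and follows exactly the short-horizon-contraction-plus-iteration argument that the paper alludes to (it omits the proof, citing the strategy of Moreau--Aeyels and the preceding practical-stability literature). The only implicit step is that you take $\lambda\geq1$ when requiring $\delta'\leq\delta_c/\lambda$ to place $x_0$ and each iterate $x_k$ inside the convergence neighborhood~$S$; this is harmless since the LUES estimate at $t=t_0$ forces $\lambda\geq1$ whenever $d(\cdot,E)$ is nondegenerate near~$E$, and one may always enlarge $\lambda$ otherwise.
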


\section{Convergence results for control-affine systems}\label{sec:05}
In this section, we present our main convergence results for trajectories of control-affine systems under sequences of open loop controls. They are strong enough to ensure that the convergence properties in \Cref{thm:LUASToPLUAS,thm:LUESToLUES} are satisfied. As in the previous sections,~$M$ is any smooth manifold.

\subsection{A suitable class of control-affine systems}\label{sec:convergence:01}
As an abbreviation for several technical assumptions, we introduce the subsequent class of control-affine systems. A similar class is also considered in~\cite{Liu1997}. Using the terminology and notation from~\Cref{sec:03}, we make the following definition.
\begin{definition}\label{def:Sys}
For given positive integers $m,r$, we denote by $\operatorname{Sys}(M;m,r)$ the set of all $(m+1)$-tuples $(f_0,f_1,\ldots,f_m)$ of time-varying vector fields $f_0,f_1,\ldots,f_m$ on~$M$ such that the following properties hold for every $\alpha\in{C^{\infty}(M)}$.
\begin{enumerate}[label=(\roman*)]
	\item\label{item:def:Sys:i} The time-varying function $f_0\alpha$ is locally uniformly bounded, locally uniformly Lipschitz continuous, and, for every $x\in{M}$, the function $(f_0\alpha)(\cdot,x)\colon\mathbb{R}\to\mathbb{R}$ is measurable.
	\item\label{item:def:Sys:ii} For every multi-index $I=(i_1,\ldots,i_{\ell})$ of length $0<\ell\leq{r}$ with $i_1,\ldots,i_{\ell}\in\{1,\ldots,m\}$, the \emph{iterated Lie derivative}
	\[
	f_{I}\alpha \ := \ f_{i_1}(f_{i_2}(\cdots{f_{i_{\ell-1}}}(f_{i_{\ell}}\alpha)\cdots))
	\]
	exists as a locally uniformly bounded and locally uniformly Lipschitz continuous time-varying function on~$M$, and its time derivative
	\[
	\partial_tf_I\alpha \ := \ \partial_t(f_I\alpha)
	\]
	exists as a locally uniformly bounded time-varying function on~$M$ and is continuous as a function on $\mathbb{R}\times{M}$.
	\item\label{item:def:Sys:iii} For every multi-index $I=(i_1,\ldots,i_{r})$ of length $r$ with $i_1,\ldots,i_{r}\in\{1,\ldots,m\}$ and every $i\in\{1,\ldots,m\}$, the Lie derivative
	\[
	f_{iI}\alpha \ := \ f_if_I\alpha \ := \ f_i(f_I\alpha)
	\]
	exists as a locally uniformly bounded Carath{\'e}odory function on~$M$.
\end{enumerate}
If $(f_0,f_1,\ldots,f_m)\in\operatorname{Sys}(M;m,r)$, then for every multi-index $I=(i_1,\ldots,i_{\ell})$ of length $0<\ell\leq{r}$ with $i_1,\ldots,i_{\ell}\in\{1,\ldots,m\}$, we define the \emph{iterated Lie bracket}
\[
[f_I] \ := \ [f_{i_1},[f_{i_2},\cdots[f_{i_{\ell-1}},f_{i_{\ell}}]\cdots]].
\]
\end{definition}
If $(f_0,f_1,\ldots,f_m)\in\operatorname{Sys}(M;m,r)$, then we call $f_0$ the \emph{drift vector field} and $f_1,\ldots,f_m$ the \emph{control vector fields}.
\begin{remark}
Note that conditions~\ref{item:def:Sys:i} and~\ref{item:def:Sys:ii} in \Cref{def:Sys} ensure that the time-varying vector fields $f_0,f_1,\ldots,f_m$ have the existence and uniqueness property of integral curves. Thus, one can define Lie derivatives of time-varying functions along these vector fields as explained in \Cref{sec:03}.
\end{remark}
\begin{remark}
If $f_0$ is a $C^1$ vector field without time dependence, and if $f_1,\ldots,f_m$ are $C^r$ vector fields without time dependence, then $(f_0,f_1,\ldots,f_m)\in\operatorname{Sys}(M;m,r)$.
\end{remark}
Our goal in this section is to derive a convergence result for control-affine systems under open-loop controls. The proof of this result is based on repeated integration by parts along trajectories of the system. For this purpose, we need the subsequent lemma. The notions of polynomial and ordinary inputs, which appear in the following statement, are introduced in~\Cref{sec:02}.
\begin{lemma}\label{thm:integrationByParts}
For positive integers $m,r$, let $(f_0,f_1,\ldots,f_m)\in\operatorname{Sys}(M;m,r)$. Then, for every polynomial input $(v_I)_{0<|I|\leq{r}}$ of order $\leq{r}$, the time-varying system
\begin{equation}\label{eq:integrationByParts:A}
\dot{x} \ = \ f_0(t,x) + \sum_{0<|I|\leq{r}}\frac{v_I(t)}{|I|}\,[f_I](t,x)
\end{equation}
has the existence and uniqueness property of solutions. Moreover, for any ordinary input $(u_i)_{i=1,\ldots,m}$, every solution $\gamma\colon{J}\to{M}$ of
\begin{equation}\label{eq:integrationByParts:B}
\dot{x} \ = \ f_0(t,x) + \sum_{i=1}^{m}u_i(t)\,f_i(t,x),
\end{equation}
every multi-index~$I$ of length $0<|I|\leq{r}$, and every $\alpha\in{C^{\infty}(M)}$, we have that
\begin{enumerate}[label=(\alph*)]
	\item the function $t\mapsto(f_0f_I\alpha)(t,\gamma(t))$ exists almost everywhere on~$J$ and is locally Lebesgue integrable;
	\item the function $t\mapsto(f_I\alpha)(t,\gamma(t))$ is locally absolutely continuous on~$J$ and its derivative is given by
	\[
	\big(\partial_tf_I\alpha\big)\big(t,\gamma(t)\big) + \big(f_0f_I\alpha\big)\big(t,\gamma(t)\big) + \sum_{i=1}^{m}u_i(t)\big(f_if_I\alpha\big)\big(t,\gamma(t)	\big)
	\]
	at almost every $t\in{J}$.
\end{enumerate}
\end{lemma}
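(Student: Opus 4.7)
My plan combines standard Carathéodory ODE theory with an absolute-continuity chain-rule argument along trajectories of \eqref{eq:integrationByParts:B}.

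\emph{Existence and uniqueness for \eqref{eq:integrationByParts:A}.} In an arbitrary smooth chart, each $[f_I]$ acts on $\alpha\in C^\infty(M)$ as a signed sum of iterated Lie derivatives $f_J\alpha$ with $|J|\leq|I|\leq r$. By \Cref{def:Sys}\ref{item:def:Sys:ii} every such $f_J\alpha$ is locally uniformly bounded and locally uniformly Lipschitz in $x$, and the continuity of $\partial_t f_J\alpha$ upgrades this to joint continuity on $\mathbb{R}\times M$. Multiplying by the bounded measurable coefficients $v_I/|I|$ preserves Carathéodory regularity and the local Lipschitz constant in $x$ on bounded time-slabs; adding $f_0$ from \ref{item:def:Sys:i} gives, in coordinates, a Carathéodory right-hand side with uniform local Lipschitz dependence in $x$, and the Carathéodory existence-and-uniqueness theorem applies directly.

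\emph{Reduction of (a) and (b) to a chain-rule formula.} Fix a compact sub-interval $[a,b]\subset J$ and a chart $U$ containing $\gamma([a,b])$. Set $g:=f_I\alpha$. By \ref{item:def:Sys:ii}, $g$ is locally uniformly Lipschitz in $x$ and $\partial_t g$ is continuous and bounded on $[a,b]\times U$, so $g$ is jointly Lipschitz there. Since $u_1,\ldots,u_m$ are bounded and $f_0,\ldots,f_m$ are locally bounded, $\gamma$ is Lipschitz on $[a,b]$, and hence $y(t):=g(t,\gamma(t))$ is absolutely continuous with a locally bounded a.e.\ derivative $\dot y$. For $i=1,\ldots,m$, $(f_ig)(t,x)=(f_{iI}\alpha)(t,x)$ is defined everywhere and locally bounded (by \ref{item:def:Sys:ii} if $|I|<r$, by \ref{item:def:Sys:iii} if $|I|=r$), and $(\partial_tg)(t,\gamma(t))$ is continuous in $t$. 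Granting that the flow-based Lie derivative $(f_0g)(t,\gamma(t))$ exists for a.e.\ $t$, local integrability in (a) follows from these uniform bounds.

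\emph{Chain rule and the main obstacle.} Splitting
\[
g(t+h,\gamma(t+h)) - g(t,\gamma(t)) = \bigl[g(t+h,\gamma(t+h)) - g(t,\gamma(t+h))\bigr] + \bigl[g(t,\gamma(t+h)) - g(t,\gamma(t))\bigr]
\]
and dividing by $h$, the first bracket tends to $(\partial_tg)(t,\gamma(t))$ by uniform continuity of $\partial_tg$. At times $t$ where $g(t,\cdot)$ is Fréchet differentiable at $\gamma(t)$, the second bracket divided by $h$ tends to $\nabla_x g(t,\gamma(t))\cdot\dot\gamma(t) = \sum_{i=0}^m u_i(t)\,\nabla_x g(t,\gamma(t))\cdot f_i(t,\gamma(t))$ (with $u_0\equiv 1$), and at such points the flow-based Lie derivative $(f_ig)(t,\gamma(t))$ coincides with $\nabla_x g(t,\gamma(t))\cdot f_i(t,\gamma(t))$ for every $i=0,1,\ldots,m$; this yields both the formula in (b) and the existence of $(f_0g)(t,\gamma(t))$ in (a). The principal difficulty is therefore to show that the set of times $t$ at which $g(t,\cdot)$ fails to be Fréchet differentiable at $\gamma(t)$ has Lebesgue measure zero: Rademacher yields a null failure set in $x$ at each fixed $t$, but the image of $\gamma$ is itself a priori null in $M$, so a plain Fubini argument in $(t,x)$ does not close the gap. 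I expect to overcome this by a local straightening of $\gamma$ into a smooth flow box on sub-intervals where $\dot\gamma\neq 0$, so that Rademacher applied in the transverse coordinates together with Fubini forces the failure set along $\gamma$ to be null; the zero-speed set of $\gamma$ can be handled separately using the algebraic constraint $f_0(t,\gamma(t)) + \sum_{i=1}^m u_i(t)f_i(t,\gamma(t))=0$ combined with the Lipschitz estimate $y(t+h)-y(t) = h\,\partial_tg(t,\gamma(t)) + o(h)$ forced there by $\dot\gamma(t)=0$.
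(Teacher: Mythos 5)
Your treatment of the existence and uniqueness part, and your reduction to showing that $y(t)=(f_I\alpha)(t,\gamma(t))$ is absolutely continuous with an identifiable a.e.\ derivative, match the paper. The gap you identify is real, but your proposed repair does not close it, and the paper's argument in fact sidesteps Rademacher entirely.

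On the repair: you cannot ``locally straighten $\gamma$ into a smooth flow box.'' A flow box is built from a smooth (or $C^1$) vector field, not from a single trajectory, and $\gamma$ here is merely absolutely continuous with a.e.\ derivative prescribed by a Carath\'eodory right-hand side; $\dot\gamma$ need not be continuous, and $\gamma$ need not be $C^1$ even on sub-intervals where $\dot\gamma\neq0$. Moreover, even if one could pass to coordinates in which $\gamma(t)\equiv 0$, Rademacher plus Fubini only yields that $g(t,\cdot)$ is Fr\'echet differentiable at a.e.\ $(t,x)$; it says nothing about the null slice $\{x=0\}$, so you still cannot conclude differentiability at $\gamma(t)$ for a.e.\ $t$. (A failure set of the form $\{(t,x):x=0\}$ is already a counterexample to the intended conclusion of this Fubini step.) The zero-speed case is also not handled by the sketch as written.

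The paper's argument never needs Fr\'echet differentiability of $f_I\alpha(t,\cdot)$ at $\gamma(t)$. It fixes a $t$ at which both $\gamma$ and $\Gamma(t):=(f_I\alpha)(t,\gamma(t))$ are differentiable (a full-measure set, as you correctly establish), splits off the $\partial_t$ part by continuity, and passes to a chart. There it expresses the difference quotient along the tangent vector $v=(\varphi\circ\gamma)'(t)$ as a telescoping sum of difference quotients along each summand $g_0(\xi),\,u_1(t)f_1,\,\dots,\,u_m(t)f_m$ of $v$. For $i\geq 1$, the one-dimensional flow-based Lie derivatives $(f_if_I\alpha)$ exist everywhere and are continuous by \Cref{def:Sys}~\ref{item:def:Sys:ii}--\ref{item:def:Sys:iii}; combined with the Lipschitz continuity of $f_I\alpha$, a mean-value-theorem and flow-approximation argument evaluates each of those $m$ limits. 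Since the limit of the full quotient is the known $\dot\Gamma(t)$, the remaining limit --- the flow-based Lie derivative along $f_0$ --- must also exist and equals the asserted difference. This delivers both the existence claim in (a) and the formula in (b) simultaneously, with no Rademacher, Fubini, or Fr\'echet differentiability along the trajectory. I suggest replacing your chain-rule step with this telescoping argument; your bounds then give local integrability in (a) exactly as you wrote.
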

The proof is given in \Cref{sec:integrationByParts}.

\subsection{An open-loop convergence result}\label{sec:convergence:02}
In this subsection, let $m,r$ be positive integers and let $(f_0,f_1,\ldots,f_m)\in\operatorname{Sys}(M;m,r)$. As in \Cref{def:inputs}, let $(\mathbf{u}^j)_j$ be a sequence of ordinary inputs $\mathbf{u}^j=(u^j_i)_{i=1,\ldots,m}$, and let $\mathbf{v}=(v_I)_{0<|I|\leq{r}}$ be a polynomial input of order $\leq{r}$. For every sequence index~$j$, we consider the time-varying system
\[
\Sigma^{j}\colon\qquad\dot{x} \ = \ f^j(t,x) \ := \ f_0(t,x) + \sum_{i=1}^{m}u^j_i(t)\,f_i(t,x),
\]
and we consider the time-varying system
\[
\Sigma^{\infty}\colon\qquad\dot{x} \ = \ f^{\infty}(t,x) \ := \ f_0(t,x) + \sum_{0<|I|\leq{r}}\frac{v_I(t)}{|I|}\,[f_I](t,x).
\]
By \Cref{thm:integrationByParts}, the systems $\Sigma^{j}$ and $\Sigma^{\infty}$ have the existence and uniqueness property of solutions. Note that this is a special case of the situation that is studied in \Cref{sec:04}.

The subsequent \Cref{thm:IntegralExpansion} contains a key ingredient for the proof of our main convergence result, \Cref{thm:convergenceResult}. It is based on repeated integration by parts on the right-hand side of $\Sigma^j$ in integral form. The same strategy is also applied in~\cite{Kurzweil1987,Liu1997} to relate $\Sigma^j$ and $\Sigma^{\infty}$.
\begin{proposition}\label{thm:IntegralExpansion}
Suppose that there exist a sequence $(\mathbf{v}^j)_j$ of polynomial inputs $\mathbf{v}^j=(v^j_I)_{0<|I|\leq{r}}$ of order $\leq{r}$, and, for every sequence index~$j$, an $r$th-order generalized difference $(\widetilde{UV}\vphantom{UV}^j_I)_{0<|I|\leq{r}}$ of $\mathbf{u}^j$ and $\mathbf{v}^j$ such that conditions~\ref{def:GDConvergence:1} and~\ref{def:GDConvergence:2} in \Cref{def:GDConvergence} are satisfied. Then, for every $\alpha\in{C^{\infty}(M)}$, every sequence index~$j$, and every solution $\gamma\colon{J}\to{M}$ of $\Sigma^{j}$, we have
\begin{subequations}\label{eq:IntegralExpansion:01}
\begin{align}
\alpha(\gamma(t)) = \ & \alpha(\gamma(t_0)) + \int_{t_0}^{t}(f^{\infty}\alpha)(s,\gamma(s))\mathrm{d}s \label{eq:IntegralExpansion:01:A} \allowdisplaybreaks \\
& + (D_1^j\alpha)(t,\gamma(t)) - (D_1^j\alpha)(t_0,\gamma(t_0)) + \int_{t_0}^t(D_2^j\alpha)(s,\gamma(s))\mathrm{d}s \label{eq:IntegralExpansion:01:B}
\end{align}
\end{subequations}
for all $t_0,t\in{J}$, where
\[
(D_1^j\alpha)(s,\gamma(s)) \ := \ -\sum_{0<|I|\leq{r}}\widetilde{UV}\vphantom{UV}^j_I(s)\,(f_I\alpha)(s,\gamma(s))
\]
for every $s\in{J}$, and
\begin{align*}
(D_2^j\alpha)(s,\gamma(s)) \, := \ & \sum_{0<|I|\leq{r}}\widetilde{UV}\vphantom{UV}^j_I(s)\,(\partial_tf_I\alpha)(s,\gamma(s)) + \sum_{0<|I|\leq{r}}\widetilde{UV}\vphantom{UV}^j_I(s)\,(f_0f_I\alpha)(s,\gamma(s)) \allowdisplaybreaks \\
& \!\!\!\!\!\!\!\!\!\!\!\!\!\!\!\!\!\!\!\!\!\!\!\!\!\!\!\!\!\!\!\!\!\!\!\! + \sum_{0<|I|\leq{r}}(v^j_I(s)-v_I(s))\,(f_I\alpha)(s,\gamma(s)) + \sum_{i=1}^{m}\sum_{|I|=r}u^j_i(s)\,\widetilde{UV}\vphantom{UV}^j_{I}(s)\,(f_{i}f_{I}\alpha)(s,\gamma(s))
\end{align*}
for almost every $s\in{J}$.
\end{proposition}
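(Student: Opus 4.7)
The plan is to expand $\alpha\circ\gamma$ by repeated integration by parts, using the defining ODE of the generalized difference to eliminate the open-loop controls $u_i^j$ stage by stage, and then to invoke the algebraic identity of \Cref{thm:algebraicIdentity} on the leading term. Fix $\alpha\in C^{\infty}(M)$, a sequence index $j$, and a solution $\gamma\colon J\to M$ of $\Sigma^{j}$. Applied with the empty multi-index, \Cref{thm:integrationByParts} shows that $s\mapsto\alpha(\gamma(s))$ is locally absolutely continuous with a.e.\ derivative $(f_0\alpha)(s,\gamma(s))+\sum_{i}u_i^{j}(s)(f_i\alpha)(s,\gamma(s))$, so
\[
\alpha(\gamma(t))-\alpha(\gamma(t_0))=\int_{t_0}^{t}(f_0\alpha)(s,\gamma(s))\,\mathrm{d}s+\int_{t_0}^{t}\sum_{i=1}^{m}u_i^{j}(s)(f_i\alpha)(s,\gamma(s))\,\mathrm{d}s.
\]
For the base step, write $u_i^{j}=v_i^{j}-\dot{\widetilde{UV}}_i^{j}$ (the first relation in \Cref{def:generalizedDifference}) and integrate by parts: the $v_i^{j}(f_i\alpha)$ pieces are kept, the boundary contributions $-\widetilde{UV}_i^{j}(f_i\alpha)(\gamma)\big|_{t_0}^{t}$ are set aside, and \Cref{thm:integrationByParts}(b) rewrites the remaining integrand as $\widetilde{UV}_i^{j}(\partial_tf_i\alpha+f_0f_i\alpha+\sum_k u_k^{j}f_kf_i\alpha)(\gamma)$.

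For the recursive step, the only surviving open-loop terms are products $\widetilde{UV}_I^{j}u_k^{j}(f_kf_I\alpha)(\gamma)$. The second relation in \Cref{def:generalizedDifference} gives $u_k^{j}\widetilde{UV}_I^{j}=v_{kI}^{j}-\dot{\widetilde{UV}}_{kI}^{j}$, so a second integration by parts converts each such term into a kept piece $v_{kI}^{j}(f_{kI}\alpha)(\gamma)$, a new boundary contribution $-\widetilde{UV}_{kI}^{j}(f_{kI}\alpha)(\gamma)\big|_{t_0}^{t}$, new integrand pieces $\widetilde{UV}_{kI}^{j}(\partial_tf_{kI}\alpha+f_0f_{kI}\alpha)(\gamma)$, and a new deeper remainder indexed by multi-indices of length $|I|+1$. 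Iterating $r$ times exhausts \Cref{def:generalizedDifference}: the only open-loop terms that cannot be reduced further are those with $|I|=r$, namely $\sum_{i,\,|I|=r}u_i^{j}\widetilde{UV}_I^{j}(f_if_I\alpha)(\gamma)$, which already coincides with the fourth sum defining $D_2^{j}\alpha$.

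To identify the main integrand, observe that after this expansion the non-remainder part reads
\[
(f_0\alpha)(\gamma)+\sum_{0<|I|\leq r}v_I^{j}(f_I\alpha)(\gamma)+\sum_{0<|I|\leq r}\widetilde{UV}_I^{j}(\partial_tf_I\alpha+f_0f_I\alpha)(\gamma).
\]
Splitting $v_I^{j}=v_I+(v_I^{j}-v_I)$ and applying the unique homomorphism of associative $\mathbb{R}$-algebras $A(\mathbf{X})\to\operatorname{End}_{\mathbb{R}}(C^{\infty}(M))$ determined by $X_i\mapsto f_i$ (under which $X_I$ acts as $\alpha\mapsto f_I\alpha$ and $[X_I]$ as $\alpha\mapsto[f_I]\alpha$) to the identity of \Cref{thm:algebraicIdentity} yields $\sum_{0<|I|\leq r}v_I(f_I\alpha)(\gamma)=\sum_{0<|I|\leq r}\tfrac{v_I}{|I|}([f_I]\alpha)(\gamma)$ almost everywhere. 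Combining with $(f_0\alpha)(\gamma)$ produces $(f^{\infty}\alpha)(\gamma)$; the leftover $(v_I^{j}-v_I)(f_I\alpha)(\gamma)$ contributions together with the $\widetilde{UV}_I^{j}$ pieces above assemble into $D_2^{j}\alpha$, and the accumulated boundary terms assemble into $(D_1^{j}\alpha)(t,\gamma(t))-(D_1^{j}\alpha)(t_0,\gamma(t_0))$.

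The main obstacle is the combinatorial bookkeeping of the recursion: one must verify that each multi-index with $0<|I|\leq r$ is produced exactly once in the sum defining $D_1^{j}$ and in the first two sums of $D_2^{j}$, and that the terminal remainder lines up precisely with the fourth sum of $D_2^{j}$. The technical justification of every integration by parts and of local integrability of every intermediate term rests on the regularity packaged into \Cref{def:Sys} (local boundedness and Lipschitz continuity of $f_I\alpha$ for $0<|I|\leq r$, together with the Carath\'eodory property of $f_if_I\alpha$ at the top level) in combination with \Cref{thm:integrationByParts}.
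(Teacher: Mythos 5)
Your proof follows essentially the same route as the paper's: repeated integration by parts driven by the recursive ODE defining the generalized difference, keeping boundary terms and the $|I|=r$ remainder, then splitting $v^j_I = v_I + (v^j_I - v_I)$ and invoking \Cref{thm:algebraicIdentity} to rewrite $\sum v_I\, f_I\alpha$ as $\sum \tfrac{v_I}{|I|}\,[f_I]\alpha$. The only small slip is attributing the base integral identity to \Cref{thm:integrationByParts} ``applied with the empty multi-index'' (the lemma is stated only for $0<|I|\leq r$); that identity is simply what it means for $\gamma$ to solve $\Sigma^j$, and the paper invokes it as such.
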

\begin{proof}
Since $\gamma$ is a solution of $\Sigma^j$, it satisfies the integral equation
\[
\alpha(\gamma(t)) \ = \ \alpha(\gamma(t_0)) + \int_{t_0}^{t}\Big((f_0\alpha)(s,\gamma(s))+\sum_{i=1}^{m}u^j_i(s)\,(f_i\alpha)(s,\gamma(s))\Big)\,\mathrm{d}s
\]
for all $t_0,t\in{J}$. We add and subtract $v^j_i(s)$ to each of the $u^j_i(s)$ in the above equation and then apply integration by parts. Note that integration by parts is an admissible operation because of \Cref{thm:integrationByParts}. Since $\widetilde{UV}\vphantom{UV}^j_i$ is by definition an antiderivative of $u^j_i-v^j_i$, we obtain
\begin{align*}
\alpha(\gamma(t)) \ = \ & \alpha(\gamma(t_0)) + \int_{t_0}^{t}\Big((f_0\alpha)(s,\gamma(s))+\sum_{i=1}^{m}v^j_i(s)\,(f_i\alpha)(s,\gamma(s))\Big)\,\mathrm{d}s \allowdisplaybreaks \\
& - \sum_{i=1}^{m}\widetilde{UV}\vphantom{UV}^j_i(t)\,(f_i\alpha)(t,\gamma(t)) + \sum_{i=1}^{m}\widetilde{UV}\vphantom{UV}^j_i(t_0)\,(f_i\alpha)(t_0,\gamma(t_0)) \allowdisplaybreaks \\
& + \sum_{i=1}^{m}\int_{t_0}^{t}\widetilde{UV}\vphantom{UV}^j_i(s)\,\big((\partial_tf_i\alpha)(s,\gamma(s))+(f_0f_i\alpha)(s,\gamma(s))\big)\,\mathrm{d}s \allowdisplaybreaks \\
& + \sum_{i=1}^{m}\sum_{i'=1}^{m}\int_{t_0}^{t}u^j_i(s)\,\widetilde{UV}\vphantom{UV}^j_{i'}(s)\,(f_{i}f_{i'}\alpha)(s,\gamma(s))\,\mathrm{d}s.
\end{align*}
The procedure can be repeated by adding and subtracting $v^j_{i,i'}(s)$ to each of the $u^j_i(s)\widetilde{UV}\vphantom{UV}^j_{i'}(s)$ in the last line of the above equation followed by integration by parts. For the integration step, we exploit again the assumption that $\widetilde{UV}\vphantom{UV}^j_{i,i'}$ is an antiderivative of $v^j_{i,i'}-u^j_i\,\widetilde{UV}\vphantom{UV}^j_{i'}$. Continuing this way up to order $r$, we obtain the expansion
\begin{subequations}\label{eq:proof:IntegralExpansion:01}
\begin{align}
\alpha(\gamma(t)) \, = \ & \alpha(\gamma(t_0)) + \int_{t_0}^{t}\Big((f_0\alpha)(s,\gamma(s))+\sum_{0<|I|\leq{r}}v^j_I(s)\,(f_I\alpha)(s,\gamma(s))\Big)\,\mathrm{d}s \label{eq:proof:IntegralExpansion:01:A} \allowdisplaybreaks \\
& - \sum_{0<|I|\leq{r}}\widetilde{UV}\vphantom{UV}^j_I(t)\,(f_I\alpha)(t,\gamma(t)) + \sum_{0<|I|\leq{r}}\widetilde{UV}\vphantom{UV}^j_I(t_0)\,(f_I\alpha)(t_0,\gamma(t_0)) \label{eq:proof:IntegralExpansion:01:B} \allowdisplaybreaks \\
& + \sum_{0<|I|\leq{r}}\int_{t_0}^{t}\widetilde{UV}\vphantom{UV}^j_I(s)\,\big((\partial_tf_I\alpha)(s,\gamma(s))+(f_0f_I\alpha)(s,\gamma(s))\big)\,\mathrm{d}s \label{eq:proof:IntegralExpansion:01:C} \allowdisplaybreaks \\
& + \sum_{i=1}^{m}\sum_{|I|=r}\int_{t_0}^{t}u^j_i(s)\,\widetilde{UV}\vphantom{UV}^j_{I}(s)\,(f_{i}f_{I}\alpha)(s,\gamma(s))\,\mathrm{d}s. \label{eq:proof:IntegralExpansion:01:D}
\end{align}
\end{subequations}
In the last step, we add and subtract $\sum_{0<|I|\leq{r}}v_I(s)\,(f_I\alpha)(s,\gamma(s))$ to the integral in~\cref{eq:proof:IntegralExpansion:01:A}. Then \cref{eq:proof:IntegralExpansion:01} becomes
\begin{align*}
\alpha(\gamma(t)) \, = \ & \alpha(\gamma(t_0)) + \int_{t_0}^{t}\Big((f_0\alpha)(s,\gamma(s))+\sum_{0<|I|\leq{r}}v^j_I(s)\,(f_I\alpha)(s,\gamma(s))\Big)\,\mathrm{d}s \allowdisplaybreaks \\
& + (D_1^j\alpha)(t,\gamma(t)) - (D_1^j\alpha)(t_0,\gamma(t_0)) + \int_{t_0}^t(D_2^j\alpha)(s,\gamma(s))\mathrm{d}s.
\end{align*}
Now the proof is complete if we can show that
\begin{equation}\label{eq:pluggingIn}
\sum_{0<|I|\leq{r}}\int_{t_0}^{t}v_I(s)\,(f_I\alpha)(s,\gamma(s))\,\mathrm{d}s \ = \ \sum_{0<|I|\leq{r}}\int_{t_0}^{t}\frac{v_I(s)}{|I|}\,([f_I]\alpha)(s,\gamma(s))\,\mathrm{d}s.
\end{equation}
Since we assume that conditions~\ref{def:GDConvergence:1} and~\ref{def:GDConvergence:2} in \Cref{def:GDConvergence} are satisfied, the polynomial input~$\mathbf{v}$ satisfies the algebraic identity in \Cref{thm:algebraicIdentity}. It is shown in~\cite{Liu1997} that this algebraic identity implies~\cref{eq:pluggingIn}, simply by ``plugging in the vector fields $f_1,\ldots,f_m$ for the indeterminates $X_1,\ldots,X_m$''. This completes the proof.
\end{proof}
\begin{remark}\label{thm:DOConvergence}
The integral expansion~\cref{eq:IntegralExpansion:01} in \Cref{thm:IntegralExpansion} can be rewritten in terms of differential operators, which allows a comparison to the results in~\cite{Morin1999}. The authors of~\cite{Morin1999} consider the vector fields $f^j$ and $f^{\infty}$ on the right-hand sides of $\Sigma^j$ and $\Sigma^{\infty}$ as time-varying differential operators and introduce the notion of \emph{DO-convergence} (short for \emph{differential operator-convergence}); see Definition~6.1 in~\cite{Morin1999}. 
When we take the derivative of~\cref{eq:IntegralExpansion:01}, and use the fact that $\gamma\colon{J}\to{M}$ is a solution of $\Sigma^j$, we obtain
\[
(f^{j}\alpha)(s,\gamma(s)) \ = \ (f^{\infty}\alpha)(s,\gamma(s)) + ((\partial_t+f^j)(D_1^j\alpha))(s,\gamma(s)) + (D_2^j\alpha)(s,\gamma(s))
\]
for almost every $s\in{J}$. One can show that if $(\mathbf{u}^j)_j$ GD($r$)-converges uniformly to~$\mathbf{v}$, then the above equation implies that the sequence $(f^j)_j$ DO-converges to $f^\infty$ in the sense of~\cite{Morin1999}. However, the convergence result in~\cite{Morin1999} only ensures convergence of trajectories for a fixed initial condition, which is not sufficient for our purposes.
\end{remark}
\begin{remark}\label{thm:boundedAE}
We know from \Cref{thm:integrationByParts} that, for every multi-index~$I$ of length $0<|I|\leq{r}$, the Lie derivative of $f_I\alpha$ along $f_0$ exists almost everywhere along trajectories of~$\Sigma^j$. The subsequent convergence theorem, \Cref{thm:convergenceResult}, assumes certain boundedness properties of these Lie derivatives whenever they exist. For this reason, we modify the notion of local uniform boundedness with respect to a non\-negative function~$b$ on~$M$ from \Cref{sec:03} as follows. For every multi-index~$I$ of length $0<|I|\leq{r}$, we say (by a slight abuse of terminology) that $f_0(f_I\alpha)$ is \emph{locally uniformly bounded by a multiple of~$b$} if for every point $x_0\in{M}$, there exists a neighborhood $V\subseteq{M}$ of $x_0$ and a constant $c>0$ such that $|(f_0(f_I\alpha))(t,x)|\leq{c}\,b(x)$ for every $(t,x)\in\mathbb{R}\times{V}$ at which $f_0(f_I\alpha)$ exists.
\end{remark}
Using the terminology in~\Cref{thm:boundedAE}, we are ready to state our general convergence theorem. For the purpose of applications to non\-holonomic systems as in~\Cref{sec:06:02}, we allow a pseudo-distance function on~$M$ to measure distances between trajectories; see~\Cref{sec:03}. This pseudo-distance has to be the \emph{lift} of a locally Euclidean distance function by a suitable projection map~$\pi$. In particular, when $\pi$ is chosen as the identity map on~$M$, then we get convergence with respect to a distance function.
\begin{theorem}\label{thm:convergenceResult}
Suppose that $(\mathbf{u}^j)_j$ GD($r$)-converges uniformly to~$\mathbf{v}$. Let $\pi\colon{M}\to\tilde{M}$ be a smooth proper surjective submersion onto a smooth manifold $\tilde{M}$. Let $\tilde{d}$ be a locally Euclidean distance function on~$\tilde{M}$ and define a pseudo-distance function~$d$ on~$M$ by $d(x,x'):=\tilde{d}(\pi(x),\pi(x'))$ for all $x,x'\in{M}$. Let~$b$ be a non\-negative locally Lipschitz continuous function on~$M$ that is constant on the fibers of~$\pi$. Suppose that $f^{\infty}$ is~$\pi$-related to a time-varying vector field on~$\tilde{M}$. Suppose that for every $i=1,\ldots,m$, every multi-index $0<|I|\leq{r}$, and every $\alpha\in{C^{\infty}(M)}$ that is constant on the fibers of $\pi$, the time-varying functions $f_0\alpha$, $f_i\alpha$, $f_if_I\alpha$, $\partial_tf_I\alpha$, and $f_0f_I\alpha$ are locally uniformly bounded by a multiple of~$b$. Then, for every compact subset~$K$ of~$M$, the trajectories of $(\Sigma^j)_j$ converge in~$K$ on compact time intervals with respect to $(d,b)$ to the trajectories of $\Sigma^{\infty}$.
\end{theorem}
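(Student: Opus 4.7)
The plan is to apply the integral expansion of \Cref{thm:IntegralExpansion} to a finite family of coordinate functions pulled back from charts on~$\tilde{M}$, bound the resulting error using the uniform GD($r$)-convergence hypothesis together with the assumed bounds by multiples of~$b$, and close a Gronwall estimate in the pseudo-distance~$d$. The projection~$\pi$ is used to transfer everything onto $\tilde{M}$, where $\tilde{d}$ is locally Euclidean and, since $f^{\infty}$ is $\pi$-related, the quantity $(f^{\infty}\alpha)(s,x)$ depends only on $\pi(x)$ whenever $\alpha$ is constant on the fibers of~$\pi$.

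First I would enlarge $K$ to a compact set $K_1\subset{M}$ containing~$K$ in its interior, cover the compact subset $\pi(K_1)$ of~$\tilde{M}$ by finitely many charts $(\tilde{U}_\ell,\tilde{\varphi}_\ell)$ in each of which the locally Euclidean estimate with constants $L_1,L_2>0$ holds, and, for each coordinate component of each such chart, extend it by a smooth cutoff to $\tilde{\alpha}\in{C^{\infty}(\tilde{M})}$ and pull back to $\alpha:=\tilde{\alpha}\circ\pi\in{C^{\infty}(M)}$. This produces a finite family~$\mathcal{F}$ of test functions, each constant on the fibers of~$\pi$, and the boundedness hypotheses of the theorem then hold with common constants over~$\mathcal{F}$. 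Applying \Cref{thm:IntegralExpansion} to each $\alpha\in\mathcal{F}$, subtracting the integral equation for $\gamma^{\infty}$ from that for $\gamma^j$, and exploiting \ref{def:GDConvergence:1}--\ref{def:GDConvergence:3} (where crucially c3($r$) controls $u_i^j\widetilde{UV}\vphantom{UV}^j_I$ for $|I|=r$), I would obtain
\[
\alpha\bigl(\gamma^j(t)\bigr) - \alpha\bigl(\gamma^{\infty}(t)\bigr) \ = \ \int_{t_0}^{t}\bigl((f^{\infty}\alpha)(s,\gamma^j(s)) - (f^{\infty}\alpha)(s,\gamma^{\infty}(s))\bigr)\,\mathrm{d}s \,+\, R^j_{\alpha}(t),
\]
with a remainder satisfying $|R^j_{\alpha}(t)|\leq\eta_j(1+T)\sup_{s\in[t_0,t_0+T]}b(\gamma^j(s))$ for some sequence $\eta_j\to0$ that is independent of $t_0$, $x_0\in{K}$, and $\alpha\in\mathcal{F}$.

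Because $f^{\infty}$ is $\pi$-related to a vector field on $\tilde{M}$ and each $\alpha\in\mathcal{F}$ is constant on the fibers of~$\pi$, the map $x\mapsto(f^{\infty}\alpha)(s,x)$ is locally Lipschitz with respect to~$d$, uniformly in~$s$ on the compact~$K_1$. On any subinterval during which $\pi\circ\gamma^j$ and $\pi\circ\gamma^{\infty}$ stay in a common chart $\tilde{U}_\ell$, the locally Euclidean comparability of~$\tilde{d}$ converts the maximum of the coordinate differences over~$\mathcal{F}$ into $d(\gamma^j(t),\gamma^{\infty}(t))$, yielding an integral inequality of the form
\[
d\bigl(\gamma^j(t),\gamma^{\infty}(t)\bigr) \ \leq \ C_1\eta_j\,\sup_s b\bigl(\gamma^j(s)\bigr) \,+\, C_2\int_{t_0}^{t}d\bigl(\gamma^j(s),\gamma^{\infty}(s)\bigr)\,\mathrm{d}s.
\]
Gronwall's lemma together with a finite induction over the chart transitions undergone by the compact trajectory $\gamma^{\infty}([t_0,t_0+T])$ produces $d(\gamma^j(t),\gamma^{\infty}(t))\leq C\eta_j\sup_s b(\gamma^j(s))$ on the full interval.

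The main technical obstacle is to replace $\sup_s b(\gamma^j(s))$ by a constant multiple of~$b(x_0)$. Since~$b$ is locally Lipschitz and constant on the fibers of~$\pi$, it descends to a locally $\tilde{d}$-Lipschitz function~$\tilde{b}$ on~$\tilde{M}$; combined with the bound $|(f^{\infty}\alpha)(s,x)|\leq c\,b(x)$ for $\alpha\in\mathcal{F}$, Gronwall's lemma applied to $b\circ\gamma^{\infty}$ gives $b(\gamma^{\infty}(t))\leq\mathrm{e}^{CT}b(x_0)$. A continuity bootstrap then propagates a comparable bound to $\gamma^j$: on the maximal subinterval of $[t_0,t_0+T]$ on which $\gamma^j$ stays within a fixed $d$-neighborhood of $\gamma^{\infty}$, and hence, by properness of~$\pi$, inside a compact subset of~$M$, the Gronwall estimate of the previous paragraph yields $d(\gamma^j,\gamma^{\infty})\leq C'\eta_j b(x_0)$ and $b(\gamma^j)\leq 2\,\mathrm{e}^{CT}b(x_0)$; for sufficiently large~$j$ this forces the subinterval to coincide with $[t_0,t_0+T]$ and simultaneously ensures existence of $\gamma^j$ on the whole interval. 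This coupled bootstrap among chart-by-chart Gronwall, existence of $\gamma^j$, and the $b$-bound is the hard part; once closed, choosing $j_0$ so that $\eta_{j_0}$ is sufficiently small yields the required $d(\gamma^j(t),\gamma^{\infty}(t))\leq\varepsilon\,b(x_0)$, with constants depending only on~$K$, $T$, the vector fields, and~$\mathcal{F}$, so that the estimate is uniform in $t_0$ and $x_0\in{K}$.
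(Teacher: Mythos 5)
Your strategy is sound and its core ingredients -- the integral expansion from \Cref{thm:IntegralExpansion}, the estimates supplied by uniform GD($r$)-convergence and the $b$-bounds, the two-stage Gronwall argument, and the closed bootstrap between trajectory existence, the $b$-bound, and the $d$-error -- all agree with what the paper does. The genuine divergence is in how the manifold geometry is handled. You cover $\pi(K_1)$ by finitely many locally Euclidean charts, pull back cutoff coordinate functions, and then run Gronwall chart by chart with a finite induction over chart transitions. The paper instead chooses a compact neighborhood $\tilde K'$ of $\pi(K)$, embeds it in a submanifold of $\mathbb{R}^N$, notes that the restriction of the Euclidean distance is again locally Euclidean (hence comparable with $\tilde d$ on the compact $\tilde K'$), and thereafter works globally with the $\mathbb{R}^N$-valued map $\pi$. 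This replaces your finite family $\mathcal{F}$ of cutoff coordinate functions by a single vector-valued function, and eliminates the chart-transition bookkeeping altogether: one Gronwall on $b\circ\Phi^j$, then one Gronwall on $\|\pi\circ\Phi^j-\pi\circ\Phi^\infty\|$. The practical payoff is that the constants are obtained in one stroke, visibly independent of $(t_0,x_0)\in\mathbb{R}\times K$. Your route reaches the same end but carries two debts you leave implicit: (i) you must bound the number of chart transitions undergone by $\Phi^\infty(\cdot,t_0,x_0)$ uniformly over $(t_0,x_0)$, which requires a Lebesgue-number argument together with the uniform bound $|(f^\infty\alpha)(s,x)|\le c\,b(x)$ on the compact $K_1$ to control the speed; and (ii) the cutoff modifications of the coordinate functions must be arranged so that, on the sets where you actually invoke the locally Euclidean two-sided estimate, the cutoff is the identity and the pointwise Lipschitz comparison between the sup of the $\alpha$-differences over $\mathcal{F}$ and $d$ is not corrupted. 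Both points are standard and can be closed; neither is needed in the paper's embedding argument, which is why that route is shorter.
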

\begin{proof}
As in \Cref{sec:04}, we denote the flow of~$\Sigma^j$ and~$\Sigma^{\infty}$ by~$\Phi^{j}$ and~$\Phi^{\infty}$, respectively. Fix an arbitrary compact set $K\subseteq{M}$ and define $\tilde{K}:=\pi(K)\subseteq\tilde{M}$. Choose a compact neighborhood $\tilde{K}'\subseteq\tilde{M}$ of~$\tilde{K}$. Since~$\tilde{K}$ is a compact set in the interior of~$\tilde{K}'$, we can find a sufficiently small $\delta>0$ such that the $\delta$-neighborhood $\{\tilde{x}\in\tilde{M} \ | \ \tilde{d}(\tilde{x},\tilde{K})\leq\delta\}$ of~$\tilde{K}$ is contained in the interior of~$\tilde{K}'$. 
Since~$\pi$ is proper, the set $K':=\pi^{-1}(\tilde{K}')$ is also compact, and, by definition of~$d$, the  $\delta$-neighborhood $\{x\in{M} \ | \ d(x,K)\leq\delta\}$ of~$K$ is contained in the interior of~$K'$. In the following, we restrict all considerations to the compact sets~$\tilde{K}'$ and~$K'$. After choosing a suitable embedding, we may assume that~$\tilde{K}'$ is a subset of an embedded submanifold of~$\mathbb{R}^N$ for~$N$ sufficiently large. Note that the restriction of the Euclidean distance on~$\mathbb{R}^N$ to an embedded submanifold of~$\mathbb{R}^N$ is a locally Euclidean distance function. 
Since we are only interested in convergence within the compact set~$\tilde{K}'$, we may replace~$\tilde{d}$ by the restriction of the Euclidean distance on~$\mathbb{R}^N$ to~$\tilde{K}'$. 
Thus, the pseudo-distance function~$d$ on~$M$ is given by
\[
d(x,x') \ = \ \|\pi(x')-\pi(x)\|
\]
for all $x,x'\in{K'}$. Because of the hypothesis that $(f_0,f_1,\ldots,f_m)\in\operatorname{Sys}(M;m,r)$, we know that the time-varying vector fields~$f_0$ and~$[f_I]$ for $0<|I|\leq{r}$ are locally uniformly Lipschitz continuous. Moreover, by definition of a polynomial input, the component functions $v_I\colon\mathbb{R}\to\mathbb{R}$ of~$\mathbf{v}$ are bounded. Thus, the time-varying vector field~$f^{\infty}$ is also locally uniformly Lipschitz continuous. By hypothesis, $f^{\infty}$ is~$\pi$-related to a time-varying vector field~$\tilde{f}^{\infty}$ on~$\tilde{M}$. Since~$f^{\infty}$ is locally uniformly bounded and locally uniformly Lipschitz continuous, and since~$\pi$ is a surjective submersion, it follows that~$\tilde{f}^{\infty}$ is locally uniformly bounded and locally uniformly Lipschitz continuous as well. 
We conclude that there exists $L_{\infty}>0$ such that
\begin{equation}\label{eq:22052017}
\|(f^{\infty}\pi)(t,x')-(f^{\infty}\pi)(t,x)\| \ \leq \ L_{\infty}\,d(x,x')
\end{equation}
for every $t\in\mathbb{R}$ and all $x,x'\in{K'}$, where $f^{\infty}\pi$ is the $\mathbb{R}^N$-valued time-varying map that consists of the Lie derivatives of the components of~$\pi$ along $f^{\infty}$. 
A similar argument, using that~$b$ locally Lipschitz continuous and constant on the fibers of~$\pi$, shows that there exists $L_b>0$ such that
\begin{equation}\label{eq:23052017}
|b(x')-b(x)| \ \leq \ L_b\,d(x,x')
\end{equation}
for all $x,x'\in{K'}$. 

Now fix arbitrary $\varepsilon>0$ and $T>0$. After shrinking $\varepsilon$ sufficiently, we may suppose that we have $\varepsilon\max_{x\in{K'}}b(x)<\delta$. We have to prove that there exists a sequence index $j_0$ such that the following \emph{approximation property} holds: whenever $t_0\in\mathbb{R}$ and $x_0\in{K}$ are such that $\Phi^{\infty}(\cdot,t_0,x_0)$ stays in~$K$ on $[t_0,t_0+T]$, then, for every $j\geq{j_0}$, the trajectory $\Phi^{j}(\cdot,t_0,x_0)$ exists on $[t_0,t_0+T]$ and
\[
\big\|\pi(\Phi^{\infty}(t,t_0,x_0))-\pi(\Phi^{j}(t,t_0,x_0))\big\| \ \leq \ \varepsilon\,b(x_0)
\]
holds for every $t\in[t_0,t_0+T]$. The assumption that $(\mathbf{u}^j)_j$ GD($r$)-converges uniformly to~$\mathbf{v}$ guarantees the existence of a sequence of polynomial inputs $\mathbf{v}^j=(v^j_I)_{0<|I|\leq{r}}$ and a sequence of generalized differences $(\widetilde{UV}\vphantom{UV}^j_I)_{0<|I|\leq{r}}$ as in \Cref{def:generalizedDifference}. In this situation, \Cref{thm:IntegralExpansion} states that for every sequence index~$j$, every $t_0\in\mathbb{R}$, every $x_0\in{M}$, and every $t$ in the domain of $\Phi^{j}(\cdot,t_0,x_0)$, we have
\begin{subequations}\label{eq:24052017}
\begin{align}
\pi\big(\Phi^j(t,t_0,x_0)\big) \, = \ & \pi(x_0) + \int_{t_0}^{t}(f^{\infty}\pi)\big(s,\Phi^j(s,t_0,x_0)\big)\mathrm{d}s - (D_1^j\pi)\big(t_0,x_0) \label{eq:eq:24052017:A} \allowdisplaybreaks \\
& + (D_1^j\pi)\big(t,\Phi^j(t,t_0,x_0)\big) + \int_{t_0}^t(D_2^j\pi)\big(s,\Phi^j(s,t_0,x_0)\big)\mathrm{d}s \label{eq:24052017:B}
\end{align}
\end{subequations}
where the components of $D_1^j\pi$ and $D_2^j\pi$ are defined as in \Cref{thm:IntegralExpansion}. Using the boundedness assumptions of the theorem and the fact that the coefficient functions $v_I$ of the polynomial input~$\mathbf{v}$ are (by definition) bounded, it follows that there exists a sufficiently large $c_{\infty}>0$ such that
\begin{equation}\label{eq:26052017}
\|(f^{\infty}\pi)(s,x)\| \ \leq \ c_{\infty}\,b(x)
\end{equation}
for every $(s,x)\in\mathbb{R}\times{K'}$. Because of the uniform GD($r$)-convergence, the functions $\widetilde{UV}\vphantom{UV}^j_{I}$, $v_I-v_I^j$ and $u^j_i\,\widetilde{UV}\vphantom{UV}^j_{I}$ converge uniformly on $\mathbb{R}$ to $0$ as $j\to\infty$. Using again the boundedness assumptions of the theorem, we conclude that there exist sequences $(c^j_1)_j$, $(c^j_2)_j$ of positive real numbers that converge $0$ as $j\to\infty$ and satisfy
\begin{equation}\label{eq:270520171}
\|D_1^j\pi(s,x)\| \ \leq \ c^j_1\,b(x)
\end{equation}
for every sequence index~$j$ and every $(s,x)\in\mathbb{R}\times{K'}$, and
\begin{equation}\label{eq:270520172}
\|D_2^j\pi(s,x)\| \ \leq \ c^j_2\,b(x)
\end{equation}
for every sequence index~$j$, and every $(s,x)\in\mathbb{R}\times{K'}$ at which the Lie derivatives of the $f_I\alpha$ along $f_0$ exist. Using expansion \cref{eq:24052017} and estimates~\cref{eq:23052017,eq:26052017,eq:270520171,eq:270520172}, we obtain
\begin{align*}
b\big(\Phi^{j}(t,t_0,x_0)\big) \ \leq \ & (1+L_b\,c^j_1)\,b(x_0) + L_b\,c^j_1\,b\big(\Phi^{j}(t,t_0,x_0)\big) \allowdisplaybreaks \\
& + L_b\,(c_{\infty}+c^j_2)\,\int_{t_0}^{t}b\big(\Phi^{j}(s,t_0,x_0)\big)\,\mathrm{d}s
\end{align*}
for every sequence index~$j$, every $t_0\in\mathbb{R}$, every $x_0\in{K}$, and every $t\geq{t_0}$ as long as $\Phi^{j}(\cdot,t_0,x_0)$ stays in $K'$ on $[t_0,t]$. Since $(c_1^j)_j$ converges to $0$, there exists a sequence index $j_{0}$ such that $L_bc_1^j<1$ for every $j\geq{j_{0}}$. Then, Gronwall's inequality implies
\begin{equation}\label{eq:28052017}
b\big(\Phi^{j}(t,t_0,x_0)\big) \ \leq \ \mu^j_1\,b(x_0)\mathrm{e}^{\mu^j_2(t-t_0)}
\end{equation}
for $j\geq{j_{0}}$, where $\mu^j_1:=(1+L_b\,c^j_1)/(1-L_b\,c^j_1)$ and $\mu^j_2:=(c_{\infty}+c^j_2)\,L_b/(1-L_b\,c^j_1)$. On the other hand, expansion~\cref{eq:24052017} and estimates~\cref{eq:22052017,eq:26052017,eq:270520171,eq:270520172} yield
\begin{align*}
d\big(\Phi^{j}(t,t_0,x_0),\Phi^{\infty}(t,t_0,x_0)\big) \, \leq \ & c^j_1\,b(x_0) + c^j_2\,\int_{t_0}^{t}b\big(\Phi^{j}(s,t_0,x_0)\big)\,\mathrm{d}s \allowdisplaybreaks \\
& \ + L_{\infty}\int_{t_0}^{t}d\big(\Phi^{j}(s,t_0,x_0),\Phi^{\infty}(s,t_0,x_0)\big)\,\mathrm{d}s
\end{align*}
for every sequence index~$j$, every $t_0\in\mathbb{R}$, every $x_0\in{K}$, and every $t\geq{t_0}$ as long as $\Phi^{\infty}(\cdot,t_0,x_0)$ stays in~$K$ on $[t_0,t]$ and $\Phi^{j}(\cdot,t_0,x_0)$ stays in $K'$ on $[t_0,t]$. Using~\cref{eq:28052017} and again Gronwall's inequality, it follows that
\[
d\big(\Phi^{j}(t,t_0,x_0),\Phi^{\infty}(t,t_0,x_0)\big) \ \leq \ b(x_0)\,\Big(c^j_1+c^j_2\,\frac{\mu^j_1}{\mu^j_2}\big(\mathrm{e}^{\mu^j_2(t-t_0)}-1\big)\Big)\,\mathrm{e}^{L_{\infty}(t-t_0)}
\]
for $j\geq{j_0}$. By possibly increasing $j_0$, we can achieve that
\[
\Big(c^j_1+c^j_2\,\frac{\mu^j_1}{\mu^j_2}\big(\mathrm{e}^{\mu^j_2T}-1\big)\Big)\,\mathrm{e}^{LT} \ < \ \varepsilon
\]
for $j\geq{j_0}$. This choice of $j_0$ ensures the asserted convergence property. 
\end{proof}

\begin{remark}
As mentioned earlier, the proof of \Cref{thm:convergenceResult} relies manly on the integral expansion~\cref{eq:IntegralExpansion:01} from \Cref{thm:IntegralExpansion} and suitable estimates for its constituents. The same strategy is also applied in~\cite{Kurzweil1988,Liu1997}. In these papers convergence is proved for single trajectories with fixed initial conditions $(t_0,x_0)$, which is motivated by the motion planning problem. However, in the preliminary report~\cite{Liu19932} of~\cite{Liu19972} it is already indicated that uniform convergence in $(t_0,x_0)$ can be obtained by applying integration by parts and the Gronwall inequality. This is precisely what is done in the proof of \Cref{thm:convergenceResult}. A combination of integration by parts and Gronwall's lemma is also used in~\cite{Moreau2003,Duerr2013} to obtain similar convergence results for the case $r=2$.
\end{remark}
If the projection map $\pi$ in~\Cref{thm:convergenceResult} is chosen as the identity map on~$M$, then, in combination with \Cref{thm:LUASToPLUAS,thm:LUESToLUES}, we get the following consequence.
\begin{corollary}\label{thm:Duerr2013}
Suppose that $(\mathbf{u}^j)_j$ GD($r$)-converges uniformly to~$\mathbf{v}$. Let~$d$ be a locally Euclidean distance function on~$M$, and let~$E$ be a compact subset of~$M$.
\begin{enumerate}[label=(\alph*)]
	\item\label{thm:Duerr2013:A} If~$E$ is LUAS for $\Sigma^{\infty}$, then~$E$ is PLUAS for $(\Sigma^j)_j$.
	\item\label{thm:Duerr2013:B} Suppose that for every $i=1,\ldots,m$, every multi-index $0<|I|\leq{r}$, and every $\alpha\in{C^{\infty}(M)}$, the time-varying functions $f_0\alpha$, $f_i\alpha$, $f_if_I\alpha$, $\partial_tf_I\alpha$, and $f_0f_I\alpha$ are locally uniformly bounded by a multiple of the distance $x\mapsto{d(x,E)}$ to~$E$. If~$E$ is LUES for $\Sigma^{\infty}$, then there exists a sequence index $j_0$ such that~$E$ is LUES for $(\Sigma^j)_{j\geq{j_0}}$.
\end{enumerate}
\end{corollary}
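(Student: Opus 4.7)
The strategy is to reduce both parts to the general convergence result in Theorem 5.5 by taking the projection map $\pi$ to be the identity on $M$, and then to invoke Propositions 4.6 and 4.7 respectively. With $\pi=\operatorname{id}_M$, the map $\pi$ is trivially a smooth proper surjective submersion, the induced pseudo-distance on $M$ coincides with the given locally Euclidean distance $d$, the $\pi$-relatedness condition for $f^{\infty}$ is vacuous, and ``constant on the fibers of $\pi$'' is vacuous as well. Because $E$ is compact and $d$ is locally Euclidean (hence $M$ is locally compact around $E$), one can choose $\delta_0>0$ so small that the closed $\delta_0$-neighborhood $K:=\{x\in M:d(x,E)\leq\delta_0\}$ is compact; this $K$ will play the role of the compact set in Theorem 5.5, while $\{x:d(x,E)<\delta_0\}$ will be used as the $\delta$-neighborhood required by Propositions 4.6 and 4.7.

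For part \ref{thm:Duerr2013:A}, I take $b\equiv 1$, which is obviously non\-negative and locally Lipschitz continuous. Under this choice, ``locally uniformly bounded by a multiple of $b$'' is the same as ``locally uniformly bounded'', which is exactly what the definition of $\operatorname{Sys}(M;m,r)$ already provides for $f_0\alpha$, $f_i\alpha$, $f_if_I\alpha$, $\partial_tf_I\alpha$, and $f_0f_I\alpha$. Thus Theorem 5.5 applies to $K$, yielding that the trajectories of $(\Sigma^j)_j$ converge uniformly in $K$ on compact time intervals with respect to $(d,1)$ to those of $\Sigma^{\infty}$. Since $K$ contains a $\delta$-neighborhood of $E$, Proposition 4.6 delivers PLUAS of $E$ for $(\Sigma^j)_j$.

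For part \ref{thm:Duerr2013:B}, I take $b(x):=d(x,E)$. This function is $1$-Lipschitz with respect to $d$ (by the triangle inequality), and since $d$ is locally Euclidean it is also locally Lipschitz in the sense required by Theorem 5.5. The extra boundedness hypothesis of part \ref{thm:Duerr2013:B} is exactly the condition that $f_0\alpha$, $f_i\alpha$, $f_if_I\alpha$, $\partial_tf_I\alpha$, and $f_0f_I\alpha$ are locally uniformly bounded by a multiple of $b$. Applying Theorem 5.5 to $K$ gives uniform convergence of trajectories in $K$ on compact time intervals with respect to $(d,b)$. Proposition 4.7 then produces a sequence index $j_0$ beyond which $E$ is LUES for $(\Sigma^j)_{j\geq j_0}$.

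The routine verifications above (compactness of the $\delta_0$-neighborhood, local Lipschitz continuity of $b$, matching the convergence neighborhood to the stability neighborhood) are the only non\-trivial points; once the hypotheses of Theorem 5.5 are checked, the conclusion follows immediately from Propositions 4.6 and 4.7. The only place where some care is required is the choice of $b(x)=d(x,E)$ in part \ref{thm:Duerr2013:B}: one must confirm that this $b$ satisfies the local Lipschitz hypothesis of Theorem 5.5, which is guaranteed by the assumption that $d$ is locally Euclidean and is in fact the reason this extra assumption is imposed.
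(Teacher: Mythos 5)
Your proof is correct and follows essentially the same route the paper intends: the paper introduces Corollary~\ref{thm:Duerr2013} precisely by the remark that specializing Theorem~\ref{thm:convergenceResult} to $\pi=\operatorname{id}_M$ and combining it with Propositions~\ref{thm:LUASToPLUAS} and~\ref{thm:LUESToLUES} yields the result, and that is exactly what you do, with the additional (correct) bookkeeping about compactness of the closed $\delta_0$-neighborhood and the local Lipschitz continuity of $b(x)=d(x,E)$ spelled out.

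One small imprecision worth noting in part~\ref{thm:Duerr2013:A}: you assert that local uniform boundedness of $f_0f_I\alpha$ ``is exactly what the definition of $\operatorname{Sys}(M;m,r)$ already provides.'' Strictly speaking, Definition~\ref{def:Sys} gives local uniform boundedness directly for $f_0\alpha$, $f_i\alpha$, $f_if_I\alpha$, and $\partial_tf_I\alpha$, but not for $f_0f_I\alpha$; the latter follows only as an easy consequence, since $f_I\alpha$ is locally uniformly Lipschitz continuous (condition~(ii)) and $f_0$ is locally uniformly bounded (condition~(i)), so wherever $f_0f_I\alpha$ exists it is bounded by the product of a Lipschitz constant for $f_I\alpha$ and a bound for $f_0$ in local coordinates, in line with the interpretation given in Remark~\ref{thm:boundedAE}. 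This is a one-line addition, not a gap.
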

Note that part~\ref{thm:Duerr2013:A} of \Cref{thm:Duerr2013} is a generalization of the main result in~\cite{Duerr2013} for Lie brackets of two vector fields.


\subsection{A convergence result for output feedback}\label{sec:convergence:03}
Let $m,r$ be positive integers with ${r\geq2}$, and let $(e_0,e_1,\ldots,e_m)\in\operatorname{Sys}(M;m,r)$. Let~$\psi$ be a non\-negative~$C^r$ function on~$M$ that is tangent to the fibers of~$\pi$. One may interpret this as a control-affine system with output $y$ given by~$\psi$, as depicted in~\Cref{fig:a}. For each $i=1,\ldots,m$, let $h_i\colon\mathbb{R}\to\mathbb{R}$ be a continuous function. Let $f_0:=e_0$, and for each $i=1,\ldots,m$, define a time-varying vector field~$f_i$ on~$M$ by $f_i(t,x):=h_i(\psi(x))\,e_i(t,x)$. Let $(\mathbf{u}^j)_j$ be a sequence of ordinary inputs $(u^j_i)_{i=1,\ldots,m}$. As in \Cref{sec:convergence:02}, we consider for each sequence index~$j$, the time-varying system
\[
\Sigma^j\colon\qquad\dot{x} \ = \ f_0(t,x) + \sum_{i=1}^{m}u^j_i(t)\,f_i(t,x).
\]
Let $(v_I)_{0<|I|\leq{r}}$ be a polynomial input of order~$\leq{r}$. The subsequent \Cref{thm:outputConvergenceResult} shows that under certain assumptions, the trajectories of the $\Sigma^{j}$ converge locally in the limit $j\to\infty$ to the trajectories of the time-varying system
\[
\Sigma^{\infty}\colon\qquad\dot{x} \ = \ f_0(t,x) + \sum_{0<|I|\leq{r}}^{m}\frac{v_I(t)}{|I|}[f_I](t,x).
\]
As in \Cref{thm:convergenceResult} we allow a pseudo distance function to measure distances between trajectories. Let $\pi\colon{M}\to\tilde{M}$ be a smooth proper surjective submersion onto a smooth manifold~$\tilde{M}$. Let~$\tilde{d}$ be a locally Euclidean distance function on~$\tilde{M}$, and define $d\colon{M}\times{M}\to\mathbb{R}$ by $d(x,x'):=\tilde{d}(\pi(x),\pi(x'))$. In this situation, the following result holds.
\begin{figure}
  \centering
  \label{fig:a}\includegraphics{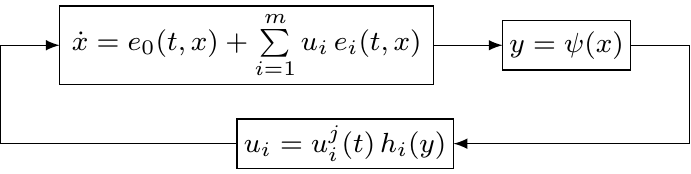}
  \caption{Interpretation of $(e_0,e_1,\ldots,e_m)\in\operatorname{Sys}(M;m,r)$ as a control-affine system. The control law $u_i=u^j_i(t)\,h_i(y)$ for $i=1,\ldots,m$ with output $y=\psi(x)$ leads to the closed-loop system $\Sigma^{j}$. Under the assumptions of \Cref{thm:outputConvergenceResult}, the trajectories of $\Sigma^j$ converge locally in the limit $j\to\infty$ to the trajectories of the system~$\Sigma^{\infty}$.}
  \label{fig:testfig}
\end{figure}
\begin{theorem}\label{thm:outputConvergenceResult}
Suppose that for each $i=1,\ldots,m$, one of the following two conditions is satisfied:
\begin{enumerate}[label=(\arabic*)]
	\item $e_i$ is tangent to the fibers of~$\pi$ and $h_i\equiv1$ on $(0,\infty)$, or 
	\item $h_i$ is of class $C^r$ on $(0,\infty)$ and for each $\nu=0,1,\ldots,r$, the $\nu$th derivative $h_i^{(\nu)}$ of $h_i$ has the property that $y^{\nu-1/2}h_i^{(\nu)}(y)$ remains bounded as $y\downarrow0$.
\end{enumerate}
Then, $(f_0,f_1,\ldots,f_m)\in\operatorname{Sys}(M;m,r)$. Additionally, suppose that $(\mathbf{u}^j)_j$ GD($r$)-con\-verges uniformly to~$\mathbf{v}$ and that the time-varying vector field on the right-hand side of $\Sigma^{\infty}$ is~$\pi$-related to a time-varying vector field on $\tilde{M}$. Then, the following holds.
\begin{enumerate}[label=(\alph*)]
	\item\label{thm:outputConvergenceResult:A} For every compact set $K\subseteq{M}$, the trajectories of $(\Sigma^j)_j$ converge in~$K$ on compact time intervals with respect to $(d,b)$ to the trajectories of $\Sigma^{\infty}$, where $b:\equiv1$ on $M$.
	\item\label{thm:outputConvergenceResult:B} If the drift $e_0$ is tangent to the fibers of~$\pi$ or if $e_0$ locally uniformly bounded by a multiple of $\sqrt{\psi}$, then, for every compact set $K\subseteq{M}$, the trajectories of $(\Sigma^j)_j$ converge in~$K$ on compact time intervals with respect to $(d,\sqrt{\psi})$ to the trajectories of $\Sigma^{\infty}$.
\end{enumerate}
\end{theorem}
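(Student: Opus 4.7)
The plan is to reduce the statement to \Cref{thm:convergenceResult} by checking three things: (i)~$(f_0,f_1,\ldots,f_m) \in \operatorname{Sys}(M;m,r)$; (ii)~local Lipschitz continuity and fiber-constancy of a suitable bound function~$b$; and (iii)~local uniform boundedness of $f_0\alpha$, $f_i\alpha$, $f_if_I\alpha$, $\partial_t f_I\alpha$, $f_0 f_I\alpha$ by a multiple of~$b$ for every $\alpha \in C^\infty(M)$ constant on the fibers of~$\pi$.

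First I would set up a Leibniz-rule expansion of the iterated Lie derivatives $f_I\alpha$. Since $f_i=h_i(\psi)\,e_i$ (with $h_i\equiv1$ in case~(1), extended by continuity to $\psi=0$), repeated differentiation produces a finite sum of terms, each a product of factors $h_{i_k}^{(\nu_k)}(\psi)$ (for various $\nu_k\geq 0$), iterated Lie derivatives of $\psi$ along the $e_j$'s, and an iterated Lie derivative of $\alpha$ along the $e_j$'s. The core pointwise estimates are: the growth hypothesis $y^{\nu-1/2}h_i^{(\nu)}(y)=O(1)$ gives $h_i^{(\nu)}(\psi)=O(\psi^{1/2-\nu})$; Glaeser's inequality for non\-negative $C^2$ functions gives $|e_j\psi|^2 \leq C\psi$ locally, hence $e_j\psi = O(\sqrt{\psi})$; and deeper iterated Lie derivatives of~$\psi$ as well as all Lie derivatives of any smooth~$\alpha$ are merely $O(1)$ locally. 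Assigning a ``$\sqrt{\psi}$-weight'' to each factor ($h_i^{(\nu)}(\psi)\mapsto 1-2\nu$, $e_j\psi\mapsto 1$, deeper $\psi$-iterates and $\alpha$-iterates $\mapsto 0$), a bookkeeping argument on the expansion shows that every term has net weight $\geq 0$. This yields local uniform boundedness of $f_I\alpha$, and one extra round gives local uniform Lipschitz continuity. Combined with the corresponding bounds for $\partial_t f_I\alpha$, $f_if_I\alpha$, and $f_0f_I\alpha$ inherited from $(e_0,\ldots,e_m)\in\operatorname{Sys}(M;m,r)$, this establishes the Sys membership, and part~(a) follows immediately from \Cref{thm:convergenceResult} with $b\equiv 1$ (trivially locally Lipschitz and constant on fibers).

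For part~(b) I take $b=\sqrt{\psi}$; it is non\-negative, constant on the fibers (since $\psi$ is), and locally Lipschitz because $|d\sqrt{\psi}|=|d\psi|/(2\sqrt{\psi})$ is locally bounded by Glaeser. The required refinement is that when $\alpha$ is constant on the fibers, the innermost factor $e_i\alpha$ vanishes whenever $i$ is a case-(1) index; so only terms whose innermost slot is a case-(2) index survive, and each such term retains an undifferentiated factor $h_i(\psi)=O(\sqrt{\psi})$ of weight $+1$. Re-running the weight bookkeeping gives every surviving term net weight~$\geq 1$, whence $f_i\alpha$, $f_if_I\alpha$, $\partial_t f_I\alpha$, $f_0f_I\alpha$ are all $O(\sqrt{\psi})$. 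The drift contribution $f_0\alpha=e_0\alpha$ is either $0$ (if $e_0$ is tangent to the fibers) or $O(\sqrt{\psi})$ (by hypothesis). \Cref{thm:convergenceResult} then yields part~(b).

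The main obstacle is this weight bookkeeping: one must confirm that each singular factor $h_i^{(\nu)}(\psi)=O(\psi^{1/2-\nu})$ is matched by exactly $\nu$ single-$\psi$-derivative factors $e_j\psi$ of weight $+1$, and that no spurious loss of $\sqrt{\psi}$-weight occurs when a later derivative acts on an already-differentiated $\psi$-factor. The growth hypothesis $y^{\nu-1/2}h_i^{(\nu)}(y)=O(1)$ is calibrated for precisely this balance, and the case-(1)/case-(2) dichotomy on innermost slots is what supplies the one unmatched $\sqrt{\psi}$ factor needed for part~(b).
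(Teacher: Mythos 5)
Your plan is the same as the paper's: reduce to \Cref{thm:convergenceResult} by verifying that $(f_0,\ldots,f_m)\in\operatorname{Sys}(M;m,r)$ and that the Lie derivatives admit the right bounds, via a Leibniz expansion of $f_I\alpha$ controlled by the growth hypothesis on $h_i^{(\nu)}$ and Glaeser's inequality $\operatorname{Lip}_c(\psi)\lesssim\sqrt{\psi}$. The key ingredients you list are exactly those the paper isolates in \Cref{thm:LipschitzDerivative,thm:Lemma00,thm:Lemma01,thm:Lemma02,thm:Lemma03}.

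However, the heart of the argument is precisely the ``bookkeeping,'' and that is where the proposal is too loose. Your weight assignment $e_j\psi\mapsto 1$, deeper $\psi$-iterates $\mapsto 0$, $h_i^{(\nu)}(\psi)\mapsto 1-2\nu$ does not hold uniformly: if $j$ is a case-(1) index then $e_j\psi\equiv 0$ (so the weight is effectively $+\infty$); if $i$ is a case-(1) index then $h_i(\psi)\equiv 1$ contributes weight $0$, not $1$; and a deeper iterate $e_{J}\psi$ can itself be $O(\sqrt{\psi})$ when all but the innermost index is case (1) (\Cref{thm:Lemma01}\ref{thm:Lemma01:D}), i.e.\ may carry weight $1$ rather than $0$. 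The paper's control is therefore not a simple term-by-term weight count: it expands $f_I\alpha$ over increasing trees (\Cref{thm:Lemma03}), refines each inner factor by a Fa\`a di Bruno sum (\Cref{thm:Lemma02}), and then runs a case analysis on the counting quantity $\#I$ (the number of case-(2) indices) showing that, for non-bushy trees, the root has at least one case-(2) child, which is what produces the one surplus $\sqrt{\psi}$ factor. Your acknowledgment that ``one must confirm $\ldots$ no spurious loss of $\sqrt{\psi}$-weight'' names the right concern, but the growth hypothesis on $h_i$ alone does not resolve it; the full case-(1)/case-(2) bookkeeping (with the tangency-to-fibers hypothesis feeding into bounds on the deeper iterates) is needed.

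Two further steps are glossed over. First, you must verify that $f_I\alpha$ actually \emph{exists} as an iterated Lie derivative and is locally uniformly Lipschitz across the zero set $E=\psi^{-1}(0)$, where the $f_i$ fail to be differentiable; the paper does this by induction (\Cref{thm:Lemma04}), using that the defect $f_I\alpha-(e_I\alpha)\prod_k h_{i_k}(\psi)$ is $O(\sqrt{\psi})$ together with the pointwise Lipschitz criterion. Second, the drift $f_0$ is a Carath\'eodory vector field and $f_I\alpha$ is merely Lipschitz, so $f_0f_I\alpha$ need not exist everywhere; the paper replaces it by the upper Dini-type quantity $\operatorname{L}(f_0,\cdot)$ of \Cref{thm:setValuedLieDerivative} and proves the boundedness for that (\Cref{thm:Lemma06}). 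Your sentence attributing the bounds on $f_0f_I\alpha$ to the hypothesis $(e_0,\ldots,e_m)\in\operatorname{Sys}(M;m,r)$ skips this. With these details supplied, your argument becomes the paper's.
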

The statements in \Cref{thm:outputConvergenceResult} can be identified as special cases of \Cref{thm:convergenceResult}. For this purpose, one has to show that the time-varying vector fields $f_0,f_1,\ldots,f_m$ satisfy the conditions~\ref{item:def:Sys:i}-\ref{item:def:Sys:iii} in \Cref{def:Sys} and that the boundedness assumptions in \Cref{thm:convergenceResult} for the Lie derivatives are satisfied. This turns out to be quite technical since the vector fields $f_1,\ldots,f_m$ are not necessarily differentiable on the zero set of~$\psi$. The proof of \Cref{thm:outputConvergenceResult} is given in \Cref{sec:outputConvergenceResult}.


\section{Applications to the design of output-feedback}\label{sec:06}
Now we apply the results of~\Cref{sec:04,sec:05} to the problem of stabilizing control-affine systems by means output feedback at states where their output functions attains a minimum value.
\subsection{The single-bracket case}\label{sec:06:01}
Let~$M$ be a smooth manifold, and let $p$ be a positive integer. Let $(g_0,g_1,\ldots,g_p)\in\operatorname{Sys}(M;p,2)$ as in \Cref{def:Sys}. We consider the control-affine system
\begin{align}
\dot{x} & \ = \ g_0(t,x) + \sum_{k=1}^{p}u_k\,g_k(t,x), \label{eq:ESCInitialSystem} \allowdisplaybreaks \\
y & \ = \ \psi(x)  \nonumber
\end{align}
whose output $y$ is determined by a non\-negative $C^2$ function~$\psi$ on~$M$. Our goal is to find output feedback of the form $u_k=u_k(t,y)$ for $k=1,\ldots,p$ that stabilizes~\cref{eq:ESCInitialSystem} around local minima of~$\psi$. It is important to note, that we restrict our considerations to a nonnegative output function. This assumption is necessary because of the particular form of the control law that we present in the following. More general, if at least a lower bound of the output function is known, then one can remove this offset from the output value to obtain a situation as above. 

Since we are interested in minima of~$\psi$, an intuitive attempt is to steer~\cref{eq:ESCInitialSystem} constantly into a descent direction of~$\psi$. Note that each of the tangent vectors $-(g_k\psi)(t,x)g_k(t,x)$, $k=1,\ldots,p$, points into such a direction, where $g_k\psi$ denotes the Lie derivative of~$\psi$ along~$g_k$; see~\Cref{sec:03}. Thus, for each $k=1,\ldots,p$, the choice
\begin{equation}\label{eq:naiveControl}
u_k=-\lambda_k(t)\,(g_k\psi)(t,x)
\end{equation}
would be a promising candidate for our purpose, where $\lambda_k\colon\mathbb{R}\to\mathbb{R}$ is a suitably chosen positive, Lebesgue measurable and bounded function. Then, the closed-loop system reads
\begin{equation}\label{eq:ESCLimitSystem}
\Sigma^{\infty}\colon\qquad\dot{x} \ = \ g_0(t,x) - \sum_{k=1}^{p}\lambda_k(t)\,(g_k\psi)(t,x)\,g_k(t,x).
\end{equation}
By choosing the $\lambda_k$ sufficiently large, one can try to compensate the influence of the drift. However, the implementation of~\cref{eq:naiveControl} requires information about the Lie derivatives of~$\psi$ at any given time $t\in\mathbb{R}$ and every state $x\in{M}$, which is not accessible in our setup. To circumvent this problem, we use an approach that is recently studied in the context of extremum seeking control, see, e.g., \cite{Duerr2013,Duerr2014,Scheinker20132,Scheinker2014}. The idea is to find a sequence $(u^j_k(t,y))_j$ of output feedback control laws for~\cref{eq:ESCInitialSystem} so that the trajectories of the corresponding sequence $(\Sigma^j)_j$ of closed-loop systems converge locally, in the sense of \Cref{def:convergence}, to the trajectories of~$\Sigma^{\infty}$ as $j\to\infty$. This way it is possible to carry over stability properties of~$\Sigma^{\infty}$ to the approximating sequence $(\Sigma^j)_j$; cf. \Cref{thm:LUASToPLUAS,thm:LUESToLUES}. Following the strategy in the above papers, we choose a sequence $(u^j_k(t,y))_j$ of the form
\begin{equation}\label{eq:ESCcontrolLaw}
u^j_k(t,y) \ := \ u^j_{2k-1}(t)\,h_{\text{s}}(y) + u^j_{2k}(t)\,h_{\text{c}}(y)
\end{equation}
for every sequence index~$j$ and every $k=1,\ldots,p$ with certain Lebesgue measurable and bounded functions $u^j_1,\ldots,u^j_{2p}\colon\mathbb{R}\to\mathbb{R}$ and certain functions $h_{\text{s}},h_{\text{c}}\colon\mathbb{R}\to\mathbb{R}$, which are specified later. When we insert $u_k=u^j_k(t,\psi(x))$ into~\cref{eq:ESCInitialSystem}, then we obtain the closed-loop system
\[
\Sigma^j\colon\qquad\dot{x} \ = \ f_0(t,x) + \sum_{i=1}^{m}u^j_i(t)\,f_i(t,x),
\]
where $m:=2p$, $f_0:=g_0$, and the time-varying vector fields $f_1,\ldots,f_m$ are given by
\begin{align*}
f_{2k-1}(t,x) & \ := \ h_{\text{s}}(\psi(x))\,g_k(t,x), \allowdisplaybreaks \\
f_{2k}(t,x) & \ := \ h_{\text{c}}(\psi(x))\,g_k(t,x)
\end{align*}
for $k=1,\ldots,p$. If we can choose $h_{\text{s}},h_{\text{c}}$ in such a way that the Lie bracket of $f_{2k-1},f_{2k}$ is given by
\begin{equation}\label{eq:magicLieBracket}
[f_{2k-1},f_{2k}](t,x) \ := \ -(g_{k}\psi)(t,x)\,g_k(t,x)
\end{equation}
for $k=1,\ldots,p$, then~\cref{eq:ESCLimitSystem} can be written as
\[
\Sigma^{\infty}\colon\qquad \dot{x} \ = \ f_0(t,x) + \sum_{0<|I|\leq{2}}\frac{v_I(t)}{|I|}\,[f_I](t,x)
\]
with $[f_I]$ as in \Cref{def:Sys} and $\mathbf{v}=(v_I)_{0<|I|\leq{2}}$ is the polynomial input (cf. \Cref{def:inputs}) whose coefficient functions are given by
\begin{equation}\label{eq:ESCPolynomialInput}
v_{I} \ := \ \left\{ \begin{tabular}{cl} $+\lambda_k$ & for $I=(2k-1,2k)$, \\ $-\lambda_k$ & for $I=(2k,2k-1)$, \\ $0$ & otherwise. \end{tabular} \right.
\end{equation}
Suppose that we can choose the functions $u^j_i$ in~\cref{eq:ESCcontrolLaw} such that the sequence of ordinary inputs $\mathbf{u}^j=(u^j_1,\ldots,u^j_{m})$ GD($2$)-converges uniformly to~$\mathbf{v}$. For instance, if the $\lambda_k$ are bounded $C^1$ functions with bounded derivatives, then one can choose the sequence $(\mathbf{u}^j)_j$ as in the following statement (the proof is given in \Cref{sec:ESCSinusoids}).
\begin{proposition}\label{thm:ESCSinusoids}
Let $\omega_1,\ldots,\omega_p$ be pairwise distinct positive real numbers. Suppose that for each $k=1,\ldots,p$, the function $\lambda_k\colon\mathbb{R}\to\mathbb{R}$ is bounded, of class $C^1$, and also its derivative is bounded. Then
\begin{subequations}\label{eq:ESCSinusoids}
\begin{align}
u^j_{2k-1}(t) & \ := \ \sqrt{2\omega_kj}\,\lambda_k(t)\,\cos(\omega_kjt) \label{eq:ESCSinusoids:A}, \allowdisplaybreaks \\
u^j_{2k}(t) & \ := \ \sqrt{2\omega_kj}\,\sin(\omega_kjt) \label{eq:ESCSinusoids:B},
\end{align}
\end{subequations}
for $k=1,\ldots,p$, defines a sequence of ordinary inputs $\mathbf{u}^j=(u^j_1,\ldots,u^j_{2p})$ that GD($2$)-converges uniformly to the polynomial input $\mathbf{v}=(v_I)_{0<|I|\leq{2}}$ defined by~\cref{eq:ESCPolynomialInput}.
\end{proposition}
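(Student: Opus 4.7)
The plan is to construct explicit polynomial inputs $\mathbf{v}^j = (v^j_I)_{0<|I|\leq 2}$ and explicit second-order generalized differences $(\widetilde{UV}^j_I)_{0<|I|\leq 2}$ of $\mathbf{u}^j$ and $\mathbf{v}^j$, and then verify the three conditions \ref{def:GDConvergence:1}--\ref{def:GDConvergence:3} of \Cref{def:GDConvergence} by direct estimation. The guiding observation is that the prefactor $\sqrt{2\omega_k j}$ in~\cref{eq:ESCSinusoids} is precisely the amplitude for which a single time-primitive of $u^j_i$ is of size $O(1/\sqrt{j})$, and the resulting products $u^j_i\widetilde{UV}^j_{i'}$ develop a DC component of order $O(1)$ that matches $v_I$ exactly.

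At the single-index level I would set
\[
\widetilde{UV}^j_{2k-1}(t) := -\sqrt{\tfrac{2}{\omega_k j}}\,\lambda_k(t)\sin(\omega_k j t),\qquad \widetilde{UV}^j_{2k}(t) := \sqrt{\tfrac{2}{\omega_k j}}\cos(\omega_k j t).
\]
Differentiating and using the defining equation $\dot{\widetilde{UV}}^j_i = v^j_i - u^j_i$ forces the choice $v^j_{2k-1}(t) = -\sqrt{2/(\omega_k j)}\,\lambda'_k(t)\sin(\omega_k j t)$ and $v^j_{2k}\equiv 0$, both uniformly $O(1/\sqrt{j})$ because $\lambda'_k$ is bounded. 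Since $v_i = 0$ for $|I|=1$, this verifies \ref{def:GDConvergence:1} at the first level, and the bound $|\widetilde{UV}^j_i(t)| = O(1/\sqrt{j})$ verifies \ref{def:GDConvergence:2} there.

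For the second-order indices I would expand each of the $(2p)^2$ products $u^j_i\widetilde{UV}^j_{i'}$ via elementary product-to-sum identities. When $k=l$, the four diagonal products simplify to $-\lambda_k^2\sin(2\omega_k jt)$, $\lambda_k+\lambda_k\cos(2\omega_k jt)$, $-\lambda_k+\lambda_k\cos(2\omega_k jt)$, and $\sin(2\omega_k jt)$, whose DC parts $0,\lambda_k,-\lambda_k,0$ match \cref{eq:ESCPolynomialInput} exactly. When $k\neq l$ the cross products oscillate purely at the nonzero frequencies $(\omega_k\pm\omega_l)j$ (nonzero thanks to the pairwise-distinctness and positivity of the $\omega_k$), so their DC parts vanish, again matching $v_I=0$. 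Every oscillatory remainder then has the form $\mu(t)\,\sigma(\Omega j t)$ with $\mu\in C^1$ bounded and $\Omega\neq 0$, so I would take $\widetilde{UV}^j_{i,i'}$ to be the obvious primitive $\mu(t)\cdot(\text{primitive of }\sigma)/(\Omega j)$, which is uniformly $O(1/j)$. Differentiating this primitive reproduces the oscillation plus a correction of size $\mu'(t)/(\Omega j) = O(1/j)$, which the equation $\dot{\widetilde{UV}}^j_{iI} = v^j_{iI} - u^j_i\widetilde{UV}^j_{I}$ absorbs into $v^j_{i,i'} - v_{i,i'}$, yielding \ref{def:GDConvergence:1} and \ref{def:GDConvergence:2} at the second level. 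Condition \ref{def:GDConvergence:3} is then automatic: $u^j_i = O(\sqrt{j})$ and $\widetilde{UV}^j_{i,i'} = O(1/j)$, so the product is uniformly $O(1/\sqrt{j})$.

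The only real obstacle is careful bookkeeping---ensuring that the DC component produced by each product $u^j_i\widetilde{UV}^j_{i'}$ coincides with the precise value $\pm\lambda_k$ or $0$ dictated by~\cref{eq:ESCPolynomialInput}. This is where the specific amplitude $\sqrt{2\omega_k j}$, together with the sine/cosine pairing in~\cref{eq:ESCSinusoids}, is essential: the factor $2\omega_k j$ inside the square root is exactly what balances against the $1/(\omega_k j)$ generated by a single time-integration. The hypothesis that $\lambda_k$ is $C^1$ with bounded derivative enters solely to guarantee that differentiating $\lambda_k(t)$ inside the various $\widetilde{UV}^j$ produces corrections that remain small \emph{uniformly} in $t\in\mathbb{R}$, which is the nontrivial strengthening of the notion of convergence from~\cite{Liu1997} that \Cref{def:GDConvergence} demands.
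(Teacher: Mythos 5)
Your proposal is correct and takes essentially the same approach as the paper's proof, which follows Theorem~5.1 of~\cite{Liu19972} by writing $u^j_\ell(t)=j^{1/2}\sum_{\omega\in\Omega(\ell)}\eta_{\omega,\ell}(t)\,\mathrm{e}^{\mathrm{i}j\omega t}$ and performing two rounds of integration by parts, separating the resonant frequency pairs (those with $\omega+\omega'=0$, yielding $v_{\ell,\ell'}$) from the non-resonant oscillatory remainders. You carry out the identical computation using real trigonometric functions and product-to-sum identities rather than complex exponentials, which is notationally heavier but not a different argument; your explicit primitives and $O(1/\sqrt{j})$, $O(1/j)$ estimates match the paper's construction term for term.
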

Next, we define continuous functions $h_{\text{s}},h_{\text{c}}\colon\mathbb{R}\to\mathbb{R}$ by
\begin{subequations}\label{eq:hshc}
\begin{align}
h_{\text{s}}(y) & \ := \ \sqrt{y}\sin(\log(y)), \label{eq:hshc:A} \allowdisplaybreaks \\
h_{\text{c}}(y) & \ := \ \sqrt{y}\cos(\log(y)) \label{eq:hshc:B}
\end{align}
\end{subequations}
for $y>0$ and by $h_{\text{s}}(y):=h_{\text{c}}(y):=0$ for $y\leq{0}$. It is straight forward to check that for every non\-negative integer $\nu$, the $\nu$th derivatives $h_{\text{s}}^{(\nu)},h_{\text{c}}^{(\nu)}$ of $h_{\text{s}}$, $h_{\text{c}}$ exist on $(0,\infty)$, and that $y^{\nu-1/2}h_{\text{s}}^{(\nu)}(y)$ and $y^{\nu-1/2}h_{\text{c}}^{(\nu)}(y)$ remain bounded as $y\downarrow0$. Thus, we are precisely in the situation of \Cref{thm:outputConvergenceResult} (to see this, define~$\pi$ as the identity map, and let $h_{2k-1}:=h_{\text{s}}$, $h_{2k}:=h_{\text{c}}$, $e_{2k-1}:=e_{2k}:=g_k$ for $k=1,\ldots,p$). In particular, \Cref{thm:outputConvergenceResult} ensures that system \cref{eq:ESCInitialSystem} under control law~\cref{eq:ESCcontrolLaw} has the existence and uniqueness properties of solutions, and that Lie brackets of the $f_i$ exist on the entire manifold. A direct computation shows that~\cref{eq:magicLieBracket} is actually satisfied.
\begin{remark}
This is not the only possible choice of $h_{\text{c}},h_{\text{s}}$, see, e.g.,~\cite{Duerr2013,Scheinker20132,Scheinker2014,Grushkovskaya2017} for different approaches. However, \cref{eq:ESCcontrolLaw} with \cref{eq:hshc} is the only known output-feedback so far that can induce exponential stability (see \Cref{thm:ESCresult} below).
\end{remark}
By \Cref{thm:outputConvergenceResult}, the trajectories of $(\Sigma^{j})_j$ converge to the trajectories of~$\Sigma^{\infty}$ within compact sets and on compact time intervals as $j\to\infty$ with respect to any locally Euclidean distance function~$d$ on~$M$. We also now from \Cref{thm:LUASToPLUAS,thm:LUESToLUES} that this convergence is strong enough to transfer stability properties of~$\Sigma^{\infty}$ to the approximating sequence~$(\Sigma^{j})_j$. This leads to the following result.
\begin{corollary}\label{thm:ESCresult}
Let~$M$ be a smooth manifold, and let $p$ be a positive integer. Let $(g_0,g_1,\ldots,g_p)\in\operatorname{Sys}(M;p,2)$. Let~$\psi$ be a nonnegative~$C^2$ function on~$M$. For $k=1,\ldots,p$, let $\lambda_k\colon\mathbb{R}\to\mathbb{R}$ be a positive, Lebesgue measurable, and bounded function. Suppose that $(\mathbf{u}^j)_j$ is a sequence of ordinary inputs $\mathbf{u}^j=(u^j_1,\ldots{u^j_{2p}})$ that GD($2$)-converges uniformly to the polynomial input~$\mathbf{v}$ of order $\leq{2}$ given by~\cref{eq:ESCPolynomialInput}. For each sequence index~$j$, let~$\Sigma^j$ denote the system~\cref{eq:ESCInitialSystem} under the control law $u_k=u^j_k(t,\psi(x))$ given by~\cref{eq:ESCcontrolLaw,eq:hshc}. Let~$E$ be a nonempty and compact subset of~$M$. Then, the following implications hold.
\begin{enumerate}[label=(\alph*)]
	\item\label{thm:ESCresult:A} If~$E$ is LUAS for~\cref{eq:ESCLimitSystem}, then~$E$ is PLUAS for $(\Sigma^{j})_j$.
	\item\label{thm:ESCresult:B} If $E=\psi^{-1}(0)$ is the zero set of $\psi$, if $g_0$ is locally uniformly bounded by $\sqrt{\psi}$, and if~$E$ is LUES for~\cref{eq:ESCLimitSystem}, then there exists a sequence index $j_0$ such that~$E$ is LUES for $(\Sigma^j)_{j\geq{j_0}}$.
\end{enumerate}
\end{corollary}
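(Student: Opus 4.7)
The plan is to cast the statement as an instance of Theorem~\ref{thm:outputConvergenceResult} (with $\pi = \mathrm{id}_M$, so the pseudo-distance from that theorem reduces to the given locally Euclidean $d$ on $M$) and then invoke Propositions~\ref{thm:LUASToPLUAS} and~\ref{thm:LUESToLUES} for the stability transfer. To fit into that framework I set $m := 2p$, $e_0 := g_0$, $e_{2k-1} := e_{2k} := g_k$, $h_{2k-1} := h_{\text{s}}$, $h_{2k} := h_{\text{c}}$ for $k = 1,\ldots,p$, so that the vector fields $f_i = h_i(\psi)\,e_i$ coincide with those appearing in $\Sigma^j$. Each such $h_i$ must satisfy case~(2) of Theorem~\ref{thm:outputConvergenceResult}, which is a computation from~\cref{eq:hshc}: on $(0,\infty)$, the $\nu$th derivative of $\sqrt{y}\sin(\log y)$ is a finite $\mathbb{R}$-linear combination of terms $y^{1/2-\nu}\sin(\log y + \theta_{\nu,\ell})$, and the same holds for $\sqrt{y}\cos(\log y)$; hence $y^{\nu-1/2}h_{\text{s}}^{(\nu)}(y)$ and $y^{\nu-1/2}h_{\text{c}}^{(\nu)}(y)$ remain bounded as $y\downarrow 0$.

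Next I verify that the limit system $\Sigma^{\infty}$ produced by Theorem~\ref{thm:outputConvergenceResult} with the polynomial input $\mathbf{v}$ of~\cref{eq:ESCPolynomialInput} coincides with~\cref{eq:ESCLimitSystem}. Because $f_{2k-1}$ and $f_{2k}$ share the common factor $g_k$, for any smooth $\alpha$ a direct computation gives
\[
[h_{\text{s}}(\psi)\,g_k,\,h_{\text{c}}(\psi)\,g_k]\alpha \ = \ \bigl(h_{\text{s}}(\psi)\,h_{\text{c}}'(\psi) - h_{\text{c}}(\psi)\,h_{\text{s}}'(\psi)\bigr)\,(g_k\psi)\,(g_k\alpha),
\]
and a short trigonometric computation from~\cref{eq:hshc}, in which the $\cos^2 + \sin^2 = 1$ terms add up to $-1$ while the $\sin\log y \,\cos\log y$ terms cancel, shows that $h_{\text{s}} h_{\text{c}}' - h_{\text{c}} h_{\text{s}}' \equiv -1$ on $(0,\infty)$. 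Thus $[f_{2k-1}, f_{2k}] = -(g_k\psi)\,g_k$, which is~\cref{eq:magicLieBracket}; since the only multi-indices $I$ with $v_I \neq 0$ are the pairs $(2k-1,2k)$ and $(2k,2k-1)$, and these contribute $\lambda_k\,[f_{2k-1},f_{2k}]/2 - \lambda_k\,[f_{2k},f_{2k-1}]/2 = -\lambda_k (g_k\psi)\,g_k$, this identifies the $\Sigma^{\infty}$ of Theorem~\ref{thm:outputConvergenceResult} with~\cref{eq:ESCLimitSystem}.

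For part~\ref{thm:ESCresult:A}, I apply Theorem~\ref{thm:outputConvergenceResult}\ref{thm:outputConvergenceResult:A} with $b \equiv 1$: for every compact $K \subseteq M$ the trajectories of $(\Sigma^j)_j$ converge in $K$ on compact time intervals with respect to $(d,1)$ to those of $\Sigma^{\infty}$. Choosing $K$ to be a compact neighborhood of $E$ and invoking Proposition~\ref{thm:LUASToPLUAS} yields PLUAS of $E$ for $(\Sigma^j)_j$.

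For part~\ref{thm:ESCresult:B}, the hypothesis that $g_0 = e_0$ is locally uniformly bounded by a multiple of $\sqrt{\psi}$ activates Theorem~\ref{thm:outputConvergenceResult}\ref{thm:outputConvergenceResult:B}, giving convergence with respect to $(d,\sqrt{\psi})$ on compact sets. The only genuinely new step, and in my view the main obstacle, is to replace the bound $\sqrt{\psi}$ by $d(\cdot,E)$ demanded by Proposition~\ref{thm:LUESToLUES}. Since $\psi \geq 0$ and $E = \psi^{-1}(0)$, every point of $E$ is a global minimum of $\psi$, so the differential $\mathrm{d}\psi$ vanishes on $E$; combining this with $\psi \in C^2$ and the compactness of $E$, a second-order Taylor expansion in charts produces a neighborhood $U$ of $E$ and a constant $C > 0$ with $\psi(x) \leq C\,d(x,E)^2$, hence $\sqrt{\psi(x)} \leq \sqrt{C}\,d(x,E)$ on $U$. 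This upgrades the convergence to convergence with respect to $(d,\,d(\cdot,E))$ on some $\delta$-neighborhood of $E$, and Proposition~\ref{thm:LUESToLUES} then produces the required $j_0$.
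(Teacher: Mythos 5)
Your proposal is correct and follows essentially the same route as the paper: identify the setup as an instance of Theorem~\ref{thm:outputConvergenceResult} with $\pi=\mathrm{id}_M$, and then transfer stability via Propositions~\ref{thm:LUASToPLUAS} and~\ref{thm:LUESToLUES}. The one place you deviate is the step you call the ``main obstacle'' in part~\ref{thm:ESCresult:B}, namely replacing the bound $\sqrt{\psi}$ by $d(\cdot,E)$: you derive $\psi(x)\leq C\,d(x,E)^2$ near $E$ from a second-order Taylor expansion (using $\mathrm{d}\psi=0$ on $E$ and compactness of $E$), whereas the paper invokes Lemma~\ref{thm:LipschitzDerivative} directly, which already asserts that $\sqrt{\psi}$ is locally Lipschitz for any nonnegative $C^2$ function $\psi$, and then uses $\sqrt{\psi}\vert_E=0$ together with compactness of $E$ to get $\sqrt{\psi}\leq L\,d(\cdot,E)$. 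The two arguments establish the same estimate; your Taylor argument is essentially a self-contained re-derivation of the special case of Lemma~\ref{thm:LipschitzDerivative} that is needed here, so there is no gap, only a small duplication of effort that citing the lemma would have avoided.
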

\begin{proof}
Statement~\ref{thm:ESCresult:A} follows immediately from \Cref{thm:outputConvergenceResult}~\ref{thm:outputConvergenceResult:A} and \Cref{thm:LUASToPLUAS}. To prove~\ref{thm:ESCresult:B}, we note that $\sqrt{\psi}$ is locally Lipschitz continuous (see \Cref{thm:LipschitzDerivative} in the appendix). This and the compactness of~$E$ imply that $\sqrt{\psi}$ is locally uniformly bounded by a multiple of the distance $x\mapsto{d(x,E)}$ to the set~$E$, where~$d$ is an arbitrarily chosen locally Euclidean distance function on~$M$. 
Therefore statement~\ref{thm:ESCresult:B} follows immediately from \Cref{thm:outputConvergenceResult}~\ref{thm:outputConvergenceResult:B} and \Cref{thm:LUESToLUES}.
\end{proof}
\begin{example}\label{thm:linearSystem}
We consider a linear system
\begin{equation}\label{eq:linearSystem}
\dot{x} \ = \ Ax + Bu,
\end{equation}
with quadratic output $y=\psi(x):=\|x\|^2$, where $A\in\mathbb{R}^{n\times{n}}$ is arbitrary and $B\in\mathbb{R}^{n\times{p}}$ is assumed to have rank~$n$. The system state of~\cref{eq:linearSystem} is treated as an unknown quantity. Our goal is to find output feedback so that the origin becomes exponentially stable for~\cref{eq:linearSystem}. Since~\cref{eq:linearSystem} is of the form~\cref{eq:ESCInitialSystem}, we can apply \Cref{thm:ESCresult}. We choose the functions $\lambda_1,\ldots,\lambda_p$ to be identically equal to a positive constant $\lambda$. Then the limit system~\cref{eq:ESCLimitSystem} reads
\[
\Sigma^{\infty}\colon\qquad \dot{x} \ = \ (A-2\lambda{BB^{\top}})x.
\]
Since $BB^{\top}$ is positive definite, we can choose $\lambda$ sufficiently large so that the origin is exponentially stable for $\Sigma^{\infty}$. Now, we can use for example \Cref{thm:ESCSinusoids} to obtain a sequence $(\mathbf{u}^j)_j$ that GD(2)-converges uniformly to $\mathbf{v}$. By \Cref{thm:ESCresult}~\ref{thm:ESCresult:B}, there exists a sequence index $j_0$ such that for every $j\geq{j_0}$, the origin is LUES for~\cref{eq:linearSystem} under the control law $u_k=u^j_k(t,\psi(x))$ for $k=1,\ldots,p$, where~$u^j_k(t,\psi(x))$ is given by~\cref{eq:ESCcontrolLaw,eq:hshc}. In fact, one can prove that in this particular case of a linear system with quadratic output, control law~\cref{eq:ESCcontrolLaw} induces global uniform exponential stability for~$j$ sufficiently large.

We conclude this example by presenting numerical data. For the sake of simplicity, we let $p=n=A=B=\lambda=1$. We do not choose the sequence of ordinary inputs from \Cref{thm:ESCSinusoids}, but instead use the following alternative. Let $\operatorname{saw}\colon\mathbb{R}\to\mathbb{R}$ be the \emph{sawtooth wave} given by
\[
\operatorname{saw}(t) \ := \ 2(t-\lfloor{t}\rfloor)-1,
\]
where $\lfloor{t}\rfloor$ denotes the greatest integer $\leq{t}$. Using the same strategy as in the proof of \Cref{thm:ESCSinusoids}, it is easy to check that
\begin{subequations}\label{eq:sawtooth}
\begin{align}
u^j_1(t) & \ := \ 10\sqrt{j/\pi}\,\operatorname{saw}(jt+1/4), \allowdisplaybreaks \\
u^j_2(t) & \ := \ 10\sqrt{j/\pi}\,\operatorname{saw}(jt)
\end{align}
\end{subequations}
defines a sequence of ordinary inputs $\mathbf{u}^j=(u^j_1,u^j_2)$ that GD(2)-converges uniformly to the polynomial input~$\mathbf{v}$ given by~\cref{eq:ESCPolynomialInput}. \Cref{fig:sawtooth:a} shows the trajectory of~\cref{eq:linearSystem} under~\cref{eq:ESCcontrolLaw} for $j=1$ with initial condition $x(0)=1$. It turns out that this choice of~$j$ is already sufficient to ensure exponential converge to the origin. When we increase~$j$, then it becomes visible that the trajectories converge locally to the trajectories of the limit system $\Sigma^{\infty}$, which is simply $\dot{x}=-x$.
\begin{figure}
\centering
\subfloat[$j=1$]{\label{fig:sawtooth:a}\includegraphics{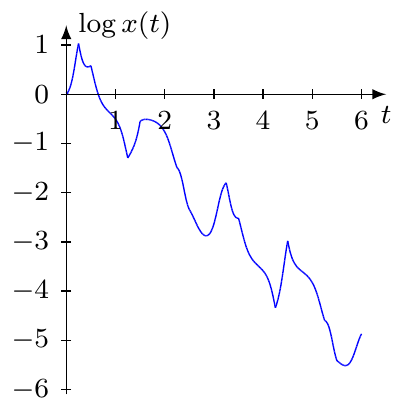}}
\subfloat[$j=10$]{\label{fig:sawtooth:b}\includegraphics{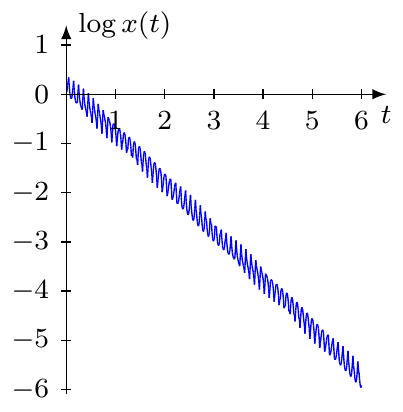}}
\subfloat[$j=100$]{\label{fig:sawtooth:c}\includegraphics{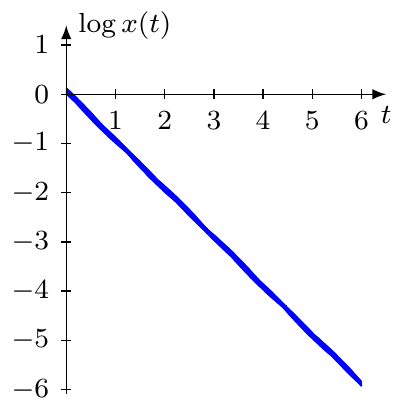}}
\caption{Trajectories of~\cref{eq:linearSystem} for $A=B=1$ under control law~\cref{eq:ESCcontrolLaw} with the phase shifted sawtooth waves in~\cref{eq:sawtooth} for initial condition $x(0)=1$ and different values of the sequence index~$j$.}
\label{fig:sawtooth}
\end{figure}
\end{example}

\subsection{Distance-based formation control for unicycles}\label{sec:06:02}
The approach in the previous subsection allows us to get access to descent directions of the output function~$\psi$ along the control vector fields. Next, we consider a particular example in which we can also get access to descent directions of~$\psi$ along Lie brackets of the control vector fields. An abridged version of this example is also presented in~\cite{Suttner2017}. Now we give a detailed explanation. 

We consider a multi-agent system of $N$ unicycles. In standard local coordinates, their kinematic equations are given by
\begin{align}\label{eq:unicycleKinematics}
\dot{x}_{\nu} & \ = \ u_{\nu,\text{t}}\,\cos\theta_{\nu}, & \dot{y}_{\nu} & \ = \ u_{\nu,\text{t}}\,\sin\theta_{\nu}, & \dot{\theta}_{\nu} & \ = \ u_{\nu,\text{r}}
\end{align}
for $\nu=1,\ldots,N$, where $p_{\nu}=(x_{\nu},y_{\nu})\in\mathbb{R}^2$ is the position of the $\nu$th unicycle, $\theta_{\nu}\in\mathbb{R}$ is an angle to describe the orientation, and $u_{\nu,\text{t}}$ and $u_{\nu,\text{r}}$ are input channels for the translational and rotational velocity, respectively. Suppose that each agent is equipped with a sensor to measure distances to other members of the team. The admissible measurements are described by an undirected graph $(\mathcal{V},\mathcal{E})$ with a set of nodes $\mathcal{V}=\{1,\ldots,N\}$ and an edge set $\mathcal{E}$ whose elements are sets of the form $\{\nu,\nu'\}$ with $\nu,\nu'\in\mathcal{V}$ and $\nu\neq\nu'$. In the following, we simply write $\nu\nu'$ for an edge instead of $\{\nu,\nu'\}$. Whenever $\nu\nu'$ is an edge of $(\mathcal{V},\mathcal{E})$, it means that agents $\nu,\nu'$ can measure their Euclidean distance $\|p_{\nu'}-p_{\nu}\|$ and share the measurement information with the other members of the team. We emphasize that the agents can only measure distances $\|p_{\nu'}-p_{\nu}\|$ but not relative positions $(p_{\nu'}-p_{\nu})$. The collective goal is to reach a configuration in which for every edge $\nu\nu'\in\mathcal{E}$, the Euclidean distance between the agents $\nu,\nu'$ is equal to a prescribed value $d_{\nu\nu'}>0$. We assume that the desired distances $d_{\nu\nu'}$ are \emph{feasible}, i.e., there exist $p_1,\ldots,p_{N}\in\mathbb{R}^{2N}$ such that $\|p_{\nu'}-p_{\nu}\|=d_{\nu\nu'}$ for every $\nu\nu'\in\mathcal{E}$.

The state manifold of the team of unicycles is the $N$-fold product $\operatorname{SE}(2)^N$ of the special Euclidean group $\operatorname{SE}(2)=\mathbb{R}^2\times\mathbb{S}$ in two dimensions. Note that~\cref{eq:unicycleKinematics} is a local representation of the driftless control-affine system
\begin{equation}\label{eq:unicycleKinematics2}
\dot{x} \ = \ \sum_{\nu=1}^{N}\big(u_{\nu,\text{t}}\,g_{\nu,\text{t}}(x) + u_{\nu,\text{r}}\,g_{\nu,\text{r}}(x)\big),
\end{equation}
where $g_{\nu,\text{t}}$ and $g_{\nu,\text{r}}$ are vector fields on $\operatorname{SE}(2)^N$ to the describe the directions of the translational and rotational velocity, respectively. The local representations of $g_{\nu,\text{t}},g_{\nu,\text{r}}$ are given in standard coordinates by
\begin{align*}
g_{\nu,\text{t}} & \ = \ \cos\theta_{\nu}\frac{\partial}{\partial{x_\nu}} + \sin\theta_{\nu}\frac{\partial}{\partial{y_\nu}}, \allowdisplaybreaks \\
g_{\nu,\text{r}} & \ = \ \frac{\partial}{\partial\theta}.
\end{align*}
Let $\pi=(\pi_1,\ldots,\pi_N)\colon\operatorname{SE}(2)^N\to\mathbb{R}^{2N}$ be the projection that maps every system state $x\in\operatorname{SE}(2)^N$ to the corresponding vector $p=(p_1,\ldots,p_N)\in\mathbb{R}^{2N}$ of positions $p_1,\ldots,p_{N}\in\mathbb{R}^{2}$. Then, we can rephrase the object as follows: we are interested in a distanced-based control law that stabilizes~\cref{eq:unicycleKinematics2} around the set
\[
E \ := \ \big\{x\in{M} \ \big| \ \forall\nu\nu'\in\mathcal{E}\colon\ \|\pi_{\nu'}(x)-\pi_{\nu}(x)\|=d_{\nu\nu'}\big\}.
\]
If we project~$E$ onto $\mathbb{R}^{2N}$, we get the set
\[
\tilde{E} \ := \ \pi(E) \ = \ \big\{(p_1,\ldots,p_{N})\in\mathbb{R}^{2N} \ \big| \ \forall\nu\nu'\in\mathcal{E}\colon\ \|p_{\nu'}-p_{\nu}\|=d_{\nu\nu'}\big\}.
\]
This set $\tilde{E}$ also appears in the context of formation control for systems of $N$ point agents in $\mathbb{R}^2$ with kinematic equations $\dot{p}_{\nu}=u_{\nu}$ for $\nu=1,\ldots,N$. A well-established approach (see \cite{Krick2009,Dorfler2009}) to stabilize the system of point agents around the set $\tilde{E}$ is the negative gradient control law $u_{\nu}=-\nabla_{p_{\nu}}\tilde{\psi}(p)$, where the potential function $\tilde{\psi}\colon\mathbb{R}^{2N}\to\mathbb{R}$ is defined by
\[
\tilde{\psi}(p) \ := \ \frac{1}{4}\sum_{\nu\nu'\in\mathcal{E}}\big(\|p_{\nu'}-p_{\nu}\|^2-d_{\nu\nu'}^{2}\big)^2.
\]
This leads to the closed loop system
\begin{equation}\label{eq:Mensa}
\tilde{\Sigma}^{\infty}\colon\qquad\dot{p} \ = \ -\nabla\tilde{\psi}(p)
\end{equation}
on $\mathbb{R}^{2N}$. It is known (see~\cite{Oh2014}) that $\tilde{E}$ is locally asymptotically stable for~$\tilde{\Sigma}^{\infty}$. Moreover (see~\cite{Park2014,Sun2016}), if $(\mathcal{E},\mathcal{V})$ satisfies certain rigidity conditions, then $\tilde{E}$ is locally exponentially stable for~$\tilde{\Sigma}^{\infty}$. However, the implementation of the negative control law
\[
u_{\nu} \ = \ -\nabla_{p_{\nu}}\tilde{\psi}(p) \ = \ \sum_{\nu\nu'\in\mathcal{E}}(p_{\nu'}-p_{\nu})\,\big(\|p_{\nu'}-p_{\nu}\|^2-d_{\nu\nu'}^{2}\big)
\]
requires information about the relative positions $(p_{\nu'}-p_{\nu})$, which are not accessible in our setup. In the following, we present a purely distance-based control law so that the team of unicycles acts like a team of point agents under the negative gradient control law.

Now we lift~$\tilde{\Sigma}^{\infty}$ to the state manifold $\operatorname{SE}(2)^N$ of the unicycle system. For this purpose, we define the non\-negative smooth function $\psi:=\tilde{\psi}\circ\pi$ on $\operatorname{SE}(2)^N$. Moreover, for every $\nu=1,\ldots,N$, we define the Lie bracket
\[
g_{\nu,\text{p}} \ := \ [g_{\nu,\text{r}},g_{\nu,\text{t}}]
\]
on $\operatorname{SE}(2)^N$. In local coordinates $g_{\nu,\text{p}}$ is given by
\[
g_{\nu,\text{p}} \ = \ -\sin\theta_{\nu}\frac{\partial}{\partial{x_\nu}} + \cos\theta_{\nu}\frac{\partial}{\partial{y_\nu}}.
\]
To give an interpretation: $g_{\nu,\text{p}}$ points perpendicular to the alignment of the $\nu$th unicycle. A direct computation shows that
\[
\sum_{\nu=1}^{N}\big((g_{\nu,\text{t}}\psi)(x)\,(g_{\nu,\text{t}}\pi)(x) + (g_{\nu,\text{p}}\psi)(x)\,(g_{\nu,\text{p}}\pi)(x)\big) \ = \ \nabla\tilde{\psi}(\pi(x))
\]
for every $x\in\operatorname{SE}(2)^N$, where $(g_{\nu,\text{t}}\pi)$ and $(g_{\nu,\text{p}}\pi)$ are the componentwise Lie derivatives of~$\pi$ along $g_{\nu,\text{t}}$ and $g_{\nu,\text{p}}$, respectively. Thus, $\tilde{\Sigma}^{\infty}$ can be lifted to the system
\begin{equation}\label{eq:unicycleLimitSystem}
\Sigma^{\infty}\colon\qquad\dot{x} \ = \ -\sum_{\nu=1}^{N}\big((g_{\nu,\text{t}}\psi)(x)\,g_{\nu,\text{t}}(x) + (g_{\nu,\text{p}}\psi)(x)\,g_{\nu,\text{p}}(x)\big)
\end{equation}
on $\operatorname{SE}(2)^N$. It follows that $E=\psi^{-1}(0)$ is locally asymptotically stable for~$\Sigma^{\infty}$. Moreover,~$E$ is locally exponentially stable for~$\Sigma^{\infty}$ if and only if $\tilde{E}$ is locally exponentially stable for $\tilde{\Sigma}^{\infty}$. On $\operatorname{SE}(2)^{N}$, we use the pseudo-distance function~$d$ that is induced by the Euclidean distance on $\mathbb{R}^{2N}$ through~$\pi$, i.e.,
\[
d(x,x') \ := \ \big\|\pi(x')-\pi(x)\big\|.
\]
This pseudo-distance only takes into account the positions of the unicycles, but not their orientations.

We return to the problem of stabilizing the system~\cref{eq:unicycleKinematics2} of unicycles around the set $E:=\psi^{-1}(0)$ by using only distance information. Note that in order to compute~$\psi$ at a given point $x\in{M}$, we do not need information about the entire system state but only the distances $\|\pi_{\nu'}(x)-\pi_{\nu}(x)\|$ for $\nu\nu'\in\mathcal{E}$. Thus, a control law which depends only on the values of~$\psi$ is distanced-based. From now on, we treat~$\psi$ as the output function of the control-affine system~\cref{eq:unicycleKinematics2}. This allows us to use basically the same approach as in the previous subsection: we determine a sequence of output feedback control laws such that the trajectories of the corresponding closed-loop systems converge locally to the trajectories of $\Sigma^{\infty}$. For this reason, we need the following technical result (the proof is given in \Cref{sec:unicycleSinusoids}).
\begin{proposition}\label{thm:unicycleSinusoids}
Let $N$ be a positive integer and let $m:=3N$. Let $\kappa_{\nu}$ denote the $\nu$th prime number, i.e., $\kappa_1=2$, $\kappa_2=3$, and so on. Define
\begin{subequations}\label{eq:unicycleSinusoids:01}
\begin{align}
\omega_{\nu,1} & \ := \ \sqrt{\kappa_{\nu+1}}\,\big(3+2\sqrt{2}\big), \label{thm:unicycleSinusoids:01:A} \allowdisplaybreaks \\
\omega_{\nu,2} & \ := \ \sqrt{\kappa_{\nu+1}}, \label{thm:unicycleSinusoids:01:B} \allowdisplaybreaks \\
\omega_{\nu,3} & \ := \ \big(\omega_{1}+\omega_{2}\big)/2 \ = \ \sqrt{\kappa_{\nu+1}}\,\big(2+\sqrt{2}\big) \label{thm:unicycleSinusoids:01:C}
\end{align}
\end{subequations}
for $\nu=1,\ldots,N$. Moreover, define a sequence of ordinary inputs $\mathbf{u}^j=\sum_{i=1}^{m}u^j_iX_i$ by
\begin{subequations}\label{eq:unicycleSinusoids:02}
\begin{align}
u^j_{3\nu-2}(t) & \ := \ (\omega_{\nu,1}\,j)^{3/4}\,\cos(\omega_{\nu,1}\,j\,t), \label{thm:unicycleSinusoids:02:A} \allowdisplaybreaks \\
u^j_{3\nu-1}(t) & \ := \ (\omega_{\nu,2}\,j)^{3/4}\,\sin(\omega_{\nu,2}\,j\,t), \label{thm:unicycleSinusoids:02:B} \allowdisplaybreaks \\
u^j_{3\nu}(t) & \ := \ {2^{13/8}}\,(\omega_{\nu,3}\,j)^{3/4}\,\cos(\omega_{\nu,3}\,j\,t) \label{thm:unicycleSinusoids:02:C}
\end{align}
\end{subequations}
for $\nu=1,\ldots,N$. Let $\mathbf{v}=\sum_{0<|I|\leq{4}}v_IX_I$ be the constant polynomial input of order $\leq{4}$ whose non\-zero coefficients are given by
\begin{subequations}\label{eq:unicycleSinusoids:03}
\begin{align}
v_{(1,2,3,3)_{\nu}}(t) & \ = \ -\frac{1}{2}+\frac{1}{\sqrt{2}}, &
v_{(1,3,2,3)_{\nu}}(t) & \ = \ +2-\sqrt{2}, \label{thm:unicycleSinusoids:04:A} \allowdisplaybreaks \\
v_{(1,3,3,2)_{\nu}}(t) & \ = \ -2, &
v_{(2,1,3,3)_{\nu}}(t) & \ = \ +\frac{1}{2}+\frac{1}{\sqrt{2}}, \label{thm:unicycleSinusoids:04:B} \allowdisplaybreaks \\
v_{(2,3,1,3)_{\nu}}(t) & \ = \ -2-\sqrt{2}, &
v_{(2,3,3,1)_{\nu}}(t) & \ = \ +2, \label{thm:unicycleSinusoids:04:C} \allowdisplaybreaks \\
v_{(3,1,2,3)_{\nu}}(t) & \ = \ -1, &
v_{(3,1,3,2)_{\nu}}(t) & \ = \ +2+\sqrt{2}, \label{thm:unicycleSinusoids:04:D} \allowdisplaybreaks \\
v_{(3,2,1,3)_{\nu}}(t) & \ = \ +1, &
v_{(3,2,3,1)_{\nu}}(t) & \ = \ -2+\sqrt{2}, \label{thm:unicycleSinusoids:04:E} \allowdisplaybreaks \\
v_{(3,3,1,2)_{\nu}}(t) & \ = \ -\frac{1}{2}-\frac{1}{\sqrt{2}}, &
v_{(3,3,2,1)_{\nu}}(t) & \ = \ +\frac{1}{2}-\frac{1}{\sqrt{2}}, \label{thm:unicycleSinusoids:04:F}
\end{align}
\end{subequations}
with the abbreviation
\begin{equation}\label{eq:unicycleSinusoids:04}
(k_1,k_2,k_3,k_4)_{\nu} \ := \ \big(3(\nu-1)+k_1,\, 3(\nu-1)+k_2,\, 3(\nu-1)+k_3,\, 3(\nu-1)+k_4\big)
\end{equation}
for $\nu=1,\ldots,N$ and $k_1,k_2,k_3,k_4=1,2,3$. Then $(\mathbf{u}^j)_j$ GD($4$)-converges uniformly to~$\mathbf{v}$.
\end{proposition}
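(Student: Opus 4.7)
The plan is to set $\mathbf{v}^j := \mathbf{v}$ for every sequence index~$j$, which makes condition~\ref{def:GDConvergence:1} trivial (the difference $v_I^j - v_I$ vanishes identically). The task then reduces to constructing bounded generalized differences $\widetilde{UV}^j_I$ satisfying $\dot{\widetilde{UV}}^j_i = -u^j_i$ and $\dot{\widetilde{UV}}^j_{iI} = v_{iI} - u^j_i\,\widetilde{UV}^j_I$ for $0 < |I| < r$, and then verifying conditions~\ref{def:GDConvergence:2} and~\ref{def:GDConvergence:3}. Since each $u^j_i$ in~\cref{eq:unicycleSinusoids:02} is a pure sinusoid of frequency of order~$j$ and amplitude of order~$j^{3/4}$, I would proceed recursively and show by induction on $|I|$ that $\widetilde{UV}^j_I$ is a finite sum of sinusoids whose frequencies are signed integer combinations of the form $\bigl(\sum_{\alpha}\pm\omega_{\nu_\alpha,k_\alpha}\bigr)j$ with amplitudes bounded by $C_I\,j^{-|I|/4}$, provided that at each intermediate order the integrand $u^j_i\,\widetilde{UV}^j_{I'}$ has zero time mean so that a bounded antiderivative can be selected. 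Each integration divides by a frequency of order~$j$, while each multiplication by a $u^j_i$ multiplies the amplitude by $j^{3/4}$, producing a net factor of $j^{-1/4}$ per level.

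The next step is the resonance analysis. A trigonometric product has nonzero DC component precisely when the signed sum of its constituent frequencies vanishes. Writing $\omega_{\nu,k} = \sqrt{\kappa_{\nu+1}}\,c_k$ with $c_1 = 3 + 2\sqrt{2}$, $c_2 = 1$, $c_3 = 2 + \sqrt{2}$, and using the $\mathbb{Q}(\sqrt{2})$-linear independence of $\{\sqrt{\kappa_\nu}\}$ over distinct primes $\kappa_\nu$, any such resonance must involve indices from a single unicycle~$\nu$. Within a single~$\nu$, a direct check rules out all nontrivial signed sums $\pm c_{k_1} \pm c_{k_2}$ (order~$2$) and $\pm c_{k_1} \pm c_{k_2} \pm c_{k_3}$ (order~$3$), while at order~$4$ the unique relation is $c_1 + c_2 - 2c_3 = 0$. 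Hence $v_I = 0$ is the correct prescription for $0 < |I| < 4$, and the only nonzero $v_I$ at order~$4$ are those indexed by the twelve tuples $(k_1, k_2, k_3, k_4)_\nu$ with multiset $\{1, 2, 3, 3\}$, which is precisely the list appearing in~\cref{eq:unicycleSinusoids:03}.

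To complete~\ref{def:GDConvergence:2}, I would verify the twelve DC coefficients explicitly: expand each $u^j_i$ and $\widetilde{UV}^j_{I'}$ in complex exponentials, isolate the contribution corresponding to the resonance $c_1 + c_2 - 2c_3 = 0$, and check that the extracted constant equals the prescribed value in~\cref{eq:unicycleSinusoids:03}. The amplitude factor $2^{13/8}$ in~\cref{thm:unicycleSinusoids:02:C} is tuned for exactly this matching. After subtracting this DC part, the remaining oscillatory integrand at order~$4$ has amplitude of order~$1$ and frequency of order~$j$, so its antiderivative has amplitude of order~$j^{-1}$, giving $\widetilde{UV}^j_I \to 0$ uniformly. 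Condition~\ref{def:GDConvergence:3} then follows: the oscillatory remainder of amplitude $O(j^{-1})$ multiplied by $u^j_i$ of amplitude $O(j^{3/4})$ gives $O(j^{-1/4})$, which vanishes uniformly.

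The main obstacle will be the explicit coefficient calculation at order~$4$. One must carry the twelve relevant iterated integrals through symbolically, correctly handling the phase shifts between the cosines (indices $1$ and $3$) and the sine (index~$2$) prescribed in~\cref{eq:unicycleSinusoids:02}, and verify agreement with each of the twelve entries of~\cref{eq:unicycleSinusoids:03}. It is here that the precise form of the amplitudes---including the unusual factor $2^{13/8}$---and the specific phase choices are decisive, and where a single sign or normalization error would break the identification.
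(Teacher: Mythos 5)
Your approach mirrors the paper's: expand the inputs as complex exponentials, build the generalized differences recursively by integration by parts with amplitude of order $j^{-|I|/4}$ at level~$|I|$, control which resonances can occur via the $\mathbb{Z}$-linear independence of $\{\sqrt{\kappa_{\nu+1}},\sqrt{2\kappa_{\nu+1}}\}_\nu$, and compute the DC contributions at level~$4$. The paper likewise takes $v^j_I := v_I$ at every level, so condition~\ref{def:GDConvergence:1} is trivial there as well.

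There is, however, a genuine gap in your resonance analysis. You assert that any resonance $\sum_i\pm\omega_{\nu_i,k_i}=0$ must involve indices from a single unicycle, and conclude that the only relation at order~$4$ is $c_1+c_2-2c_3=0$. Neither claim is correct: at order~$4$ one can also achieve $\omega_1+\omega_2+\omega_3+\omega_4=0$ by \emph{cancellation in pairs}, such as $\omega_{\nu,k}-\omega_{\nu,k}+\omega_{\nu',k'}-\omega_{\nu',k'}=0$ with possibly $\nu\neq\nu'$, and these resonant tuples produce nonzero individual DC terms. If any of those contributions survived in aggregate, then for the affected multi-indices $I$ (whose $v_I$ you prescribe to be~$0$) the integrand $u^j_i\widetilde{UV}\vphantom{UV}^j_{I'}$ would fail to have zero mean, $\widetilde{UV}\vphantom{UV}^j_I$ would grow linearly in~$t$, and condition~\ref{def:GDConvergence:2} would be violated. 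What actually makes the construction work --- and what the paper verifies explicitly --- is that the contributions of a resonant tuple $\hat\omega$ and its negative $-\hat\omega$ cancel: the coefficients satisfy $\eta_{-\omega,\ell}=\overline{\eta_{\omega,\ell}}$ because the signals in \cref{eq:unicycleSinusoids:02} are real, so in the pure-cancellation case the product $\eta_{\omega_1,\ell_1}\cdots\eta_{\omega_4,\ell_4}$ is real, while the denominator $\omega_4(\omega_4+\omega_3)(\omega_4+\omega_3+\omega_2)$ changes sign under $\hat\omega\mapsto-\hat\omega$. This $\pm\hat\omega$-pairing is also what justifies $v_I=0$ at order~$2$, where you acknowledge ``trivial'' resonances exist but never verify that their contributions vanish. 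Without this step your conclusion does not follow.
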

Now we are ready to present the control law. Let $h_{\text{s}},h_{\text{c}}\colon\mathbb{R}\to\mathbb{R}$ be defined as in~\cref{eq:hshc}. Let $m:=3N$, and let $(\mathbf{u}^j)_j$ and~$\mathbf{v}$ be defined as in \Cref{thm:unicycleSinusoids}. Define
\begin{subequations}\label{eq:unicycleControlLaw}
\begin{align}
u_{\nu,\text{t}}^j(t,y) & \ := \ u^j_{3\nu-2}(t)\,h_{\text{s}}(y) + u^j_{3\nu-1}(t)\,h_{\text{c}}(y) \label{eq:unicycleControlLaw:A} \allowdisplaybreaks \\
u_{\nu,\text{r}}^j(t,y) & \ := \ u^j_{3\nu}(t) \label{eq:unicycleControlLaw:B}
\end{align}
\end{subequations}
for every agent $\nu=1,\ldots,N$, every sequence index~$j$, every time $t\in\mathbb{R}$ and every output value $y\in\mathbb{R}$. When we insert~$u_{\nu,\text{t}}=u_{\nu,\text{t}}^j(t,\psi(x))$ and $u_{\nu,\text{r}}=u_{\nu,\text{r}}^j(t,\psi(x))$ into~\cref{eq:unicycleKinematics2}, we obtain the closed-loop system
\[
\Sigma^{j}\colon\qquad \dot{x} \ = \ \sum_{i=1}^{m}u^j_i(t)\,f_i(x),
\]
where the vector fields $f_1,\ldots,f_m$ on~$M$ are given by
\begin{align*}
f_{3\nu-2}(x) & \ := \ h_{\text{s}}(\psi(x))\,g_{\nu,\text{t}}(x), \allowdisplaybreaks \\
f_{3\nu-1}(x) & \ := \ h_{\text{c}}(\psi(x))\,g_{\nu,\text{t}}(x), \\
f_{3\nu}(x) & \ := \ g_{\nu,\text{r}}(x).
\end{align*}
It is easy to check that we are now in a situation in which we can apply \Cref{thm:outputConvergenceResult} (to see this, note that~$\pi$ is a smooth proper surjective submersion, note that $g_{\nu,\text{r}}$ is tangent to the fibers of~$\pi$, and let $h_{3\nu-2}:=h_{\text{s}}$, $h_{3\nu-1}:=h_{\text{c}}$, $h_{3\nu}:\equiv1$, $e_{3\nu-2}:=e_{3\nu-1}:=g_{\nu,\text{t}}$, $e_{3\nu}:=g_{\nu,\text{r}}$ for $\nu=1,\ldots,N$). Therefore, $\Sigma^j$ has the existence and uniqueness property of solutions. A direct computation shows that~\cref{eq:unicycleLimitSystem} can be written as
\[
\Sigma^{\infty}\colon\qquad\dot{x} \ = \ \sum_{0<|I|\leq{4}}\frac{v_I}{|I|}[f_I](x)
\]
with the iterated Lie derivatives $[f_I]$ as in \Cref{def:Sys}. Since we deal with a driftless system, \Cref{thm:outputConvergenceResult}~\ref{thm:outputConvergenceResult:B} states that for every compact subset $K\subseteq{M}$, the trajectories of $(\Sigma^j)_j$ converge in~$K$ on compact time intervals with respect to $(d,\sqrt{\psi})$ to the trajectories of $\Sigma^{\infty}$. Combining the convergence result, \Cref{thm:outputConvergenceResult}, and the stability results \Cref{thm:LUASToPLUAS,thm:LUESToLUES}, one can derive the following consequence.
\begin{corollary}\label{thm:unicycles}
Let~$E$, $\tilde{E}$, and $\tilde{\psi}\colon\mathbb{R}^{2N}\to\mathbb{R}$ be defined as before. For each sequence index $j$, let $\Sigma^j$ denote the system \cref{eq:unicycleKinematics} under the control law $u_{\nu,\text{t}}=u_{\nu,\text{t}}^j(t,\tilde{\psi}(p))$ and $u_{\nu,\text{r}}=u_{\nu,\text{r}}^j(t,\tilde{\psi}(p))$ given by \cref{eq:unicycleControlLaw}, \cref{eq:hshc} and \Cref{thm:unicycleSinusoids}. Then, the following holds.
\begin{enumerate}[label=(\alph*)]
	\item\label{item:unicycles:A} The set~$E$ is PLUAS for~$(\Sigma^j)_j$.
	\item\label{item:unicycles:B} If~$\tilde{E}$ is locally exponentially stable for~\cref{eq:Mensa}, then there exists a sequence index $j_0$ such that~$E$ is LUES for~$(\Sigma^j)_{j\geq{j_0}}$.
\end{enumerate}
\end{corollary}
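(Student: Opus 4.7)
The convergence of trajectories of $(\Sigma^j)_j$ to those of $\Sigma^{\infty}$ on compact subsets of $M$ with respect to $(d,\sqrt{\psi})$ has already been established via \Cref{thm:outputConvergenceResult}~\ref{thm:outputConvergenceResult:B}, and the analogous statement with $b\equiv 1$ follows from \Cref{thm:outputConvergenceResult}~\ref{thm:outputConvergenceResult:A}. The plan is to lift the relevant stability of $\tilde{E}$ for $\tilde{\Sigma}^{\infty}$ to stability of $E$ for $\Sigma^{\infty}$ via the $\pi$-relatedness recalled before the corollary, and then apply \Cref{thm:LUASToPLUAS} for part~\ref{item:unicycles:A} and \Cref{thm:LUESToLUES} for part~\ref{item:unicycles:B}. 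The lift is immediate: $\pi$-relatedness yields $\pi\circ\Phi^{\infty}(\cdot,t_0,x_0)=\tilde{\Phi}^{\infty}(\cdot,t_0,\pi(x_0))$, hence $d(\Phi^{\infty}(t,t_0,x_0),E)=\tilde{d}(\tilde{\Phi}^{\infty}(t,t_0,\pi(x_0)),\tilde{E})$, and since $\tilde{\Sigma}^{\infty}$ is autonomous the asymptotic (respectively, exponential) decay of the right-hand side is automatically uniform in $t_0$.

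For part~\ref{item:unicycles:A}, the result of~\cite{Oh2014} recalled earlier gives LAS of $\tilde{E}$ for~\cref{eq:Mensa}, hence LUAS of $E$ for $\Sigma^{\infty}$ with respect to $d$. To invoke \Cref{thm:LUASToPLUAS} one needs convergence on some $\delta$-neighborhood $\{d(\cdot,E)\leq\delta\}$ of $E$, which is not compact because $E$ is invariant under the diagonal $\operatorname{SE}(2)$ action on positions. However, the entire construction (vector fields $f_i$, output $\psi$, pseudo-distance $d$, inputs $\mathbf{u}^j$, and the weight $b\equiv 1$) is $\operatorname{SE}(2)$-equivariant and the estimates in the proof of \Cref{thm:convergenceResult} depend only on $\operatorname{SE}(2)$-invariant quantities, so uniform convergence on one compact fundamental region for the group action extends by equivariance to uniform convergence on the full $\delta$-neighborhood. \Cref{thm:LUASToPLUAS} then delivers PLUAS of $E$ for $(\Sigma^j)_j$.

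Part~\ref{item:unicycles:B} proceeds along the same lines, the lift now giving LUES of $E$ for $\Sigma^{\infty}$ and the $\operatorname{SE}(2)$-equivariance argument delivering convergence on the $\delta$-neighborhood of $E$. The remaining gap is that \Cref{thm:outputConvergenceResult}~\ref{thm:outputConvergenceResult:B} provides convergence with weight $\sqrt{\psi}$ whereas \Cref{thm:LUESToLUES} asks for weight $b(x):=d(x,E)$; this is closed by a quadratic majorization. Since $\tilde{\psi}\in C^{\infty}(\mathbb{R}^{2N})$ attains its minimum value $0$ on $\tilde{E}$ (so that $\nabla\tilde{\psi}$ vanishes on $\tilde{E}$), a second-order Taylor bound combined with compactness of a fundamental region for the $\operatorname{SE}(2)$ symmetry produces $C>0$ with $\tilde{\psi}(\tilde{p})\leq C\,\tilde{d}(\tilde{p},\tilde{E})^2$ on a $\delta$-neighborhood of $\tilde{E}$, hence $\sqrt{\psi(x)}\leq\sqrt{C}\,d(x,E)$ on the pullback; \Cref{thm:LUESToLUES} then supplies the desired $j_0$. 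The main technical obstacle in both parts is the non-compactness of $E$, which is handled by $\operatorname{SE}(2)$-equivariance of the whole construction; this step should be spelled out explicitly rather than hidden behind the compact-set hypothesis of \Cref{thm:outputConvergenceResult}.
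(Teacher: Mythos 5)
Your proposal is correct and follows essentially the same line of argument as the paper's own proof: lift stability from $\tilde{\Sigma}^\infty$ on $\mathbb{R}^{2N}$ to $\Sigma^\infty$ on $\operatorname{SE}(2)^N$ through $\pi$-relatedness, observe that \Cref{thm:outputConvergenceResult} only gives convergence on compact sets while $E$ is non\-compact, exploit the symmetry of the problem to reduce to a compact set, and bridge the weight $\sqrt{\psi}$ to the weight $d(\cdot,E)$ before invoking \Cref{thm:LUASToPLUAS,thm:LUESToLUES}. Two minor differences are worth noting. First, the paper reduces to compactness using only the translational invariance $p\mapsto p-(p_1,\ldots,p_1)$, which already collapses the non\-compact direction to the compact slice $\tilde{K}:=\{p\in\tilde{E}\colon p_1=0\}$; the full diagonal $\operatorname{SE}(2)$-equivariance that you invoke holds as well, but it is more than what is needed (and requires you to be careful that the diagonal rotation action is properly handled, whereas the translation subgroup's quotient is manifestly tame). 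Second, for the bound $\sqrt{\psi}\leq C\,d(\cdot,E)$, you rederive a quadratic Taylor estimate $\tilde{\psi}\leq C\,\tilde{d}(\cdot,\tilde{E})^2$ near $\tilde{E}$, whereas the paper just reuses \Cref{thm:LipschitzDerivative} (square root of a non\-negative $C^2$ function is locally Lipschitz) together with the vanishing of $\sqrt{\tilde{\psi}}$ on $\tilde{E}$; these are equivalent in content, but the latter avoids having to argue about Taylor remainders near a set that is a priori only closed, not a submanifold. Your concluding remark that the equivariance/compactness-reduction step deserves to be made explicit is a fair criticism and is, in fact, exactly what the paper does in its proof, albeit tersely.
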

\begin{proof}
From \Cref{thm:outputConvergenceResult} we know that for every compact set $K\subseteq{M}$, the trajectories of $(\Sigma^j)_j$ converge in~$K$ on compact time intervals with respect to $(d,\sqrt{\psi})$ to the trajectories of $\Sigma^{\infty}$. However, to apply \Cref{thm:LUASToPLUAS,thm:LUESToLUES}, we have to show that the trajectories of $(\Sigma^j)_j$ converge in a $\delta$-neighborhood of the non\-compact set~$E$ on compact time intervals with respect to $(d,b)$ to the trajectories of $\Sigma^{\infty}$, where $b(x):=d(x,E)$ is the pseudo distance of $x\in{M}$ to~$E$. This follows if we can prove the following two properties: $\sqrt{\psi}$ is bounded on this $\delta$-neighborhood of~$E$ by a multiple of~$b$, and the trajectories of $(\Sigma^j)_j$ converge in a $\delta$-neighborhood of~$E$ with respect to $(d,\sqrt{\psi})$ to the trajectories of $\Sigma^{\infty}$. For this purpose, we exploit the translational invariance of the problem.

We show that $\sqrt{\psi}$ is bounded on any $\delta$-neighborhood of~$E$ by a multiple of~$b$. For every $p\in\mathbb{R}^{2N}$, let $\tilde{b}(p)$ be the Euclidean distance of $p$ to $\tilde{E}$. Then we have $b=\tilde{b}\circ\pi$. Let $\tilde{K}$ be the compact set of all $(p_1,\ldots,p_N)\in\tilde{E}$ with $p_1=0$. Since $\tilde{\psi}$ is non\-negative and smooth, we know from \cref{thm:LipschitzDerivative} in the appendix that $\sqrt{\text{\footnotesize$\tilde{\psi}$}}$ is locally Lipschitz continuous. It follows that $\sqrt{\text{\footnotesize$\tilde{\psi}$}}$ is Lipschitz continuous on any $\delta$-neighborhood of $\tilde{K}$ (with respect to the Euclidean distance). Note that every element $p=(p_1,\ldots,p_N)$ in a $\delta$-neighborhood of $\tilde{E}$ can be shifted to the $\delta$-neighborhood of $\tilde{K}$ by means of the translation $p\mapsto{p-(p_1,\ldots,p_1)}$. Moreover, $\tilde{\psi}$ is invariant under this shift. It follows that $\sqrt{\text{\footnotesize$\tilde{\psi}$}}$ is Lipschitz continuous on any $\delta$-neighborhood of $\tilde{E}$. This implies that $\sqrt{\text{\footnotesize$\tilde{\psi}$}}$ is bounded by a multiple of $\tilde{b}$ on the $\delta$-neighborhood of $\tilde{E}$. 
Thus, $\sqrt{\psi}$ is bounded by a multiple of~$b$ on any $\delta$-neighborhood of~$E$.

It is left to prove that the trajectories of $(\Sigma^j)_j$ converge in a $\delta$-neighborhood of~$E$ with respect to $(d,\sqrt{\psi})$ to the trajectories of $\Sigma^{\infty}$. Note that both $\Sigma^{j}$ and $\Sigma^{\infty}$ are invariant under translations of the form $p\mapsto{p-(p_1,\ldots,p_1)}$. As above, one can exploit the translational invariance to reduce the problem of convergence within a given $\delta$-neighborhood of~$E$ to a compact subset of~$M$.

So far, we have shown that the trajectories of $(\Sigma^j)_j$ converge in a $\delta$-neighborhood of~$E$ with respect to $(d,b)$ to the trajectories of $\Sigma^{\infty}$, where $b(x):=d(x,E)$ for every $x\in{M}$. Since~$E$ is locally asymptotically stable for $\Sigma^{\infty}$ (see~\cite{Oh2014}), statement~\ref{item:unicycles:A} follows from \Cref{thm:LUASToPLUAS}. Moreover, statement~\ref{item:unicycles:B} follows from \Cref{thm:LUESToLUES}.
\end{proof}
The reader is referred to~\cite{Suttner2017} for simulation results.
\begin{remark}
The control law can be extended to the problem of \emph{distributed} formation control. In this case, the agents cannot share their distance measurements. Instead, each agent tries to minimize its own local potential function. This extension requires however a different notion of convergence of trajectories. We will discuss this problem in a subsequent paper.
\end{remark}


\appendix

\section{Proofs}\label{sec:appendix}

\subsection{Proof of \texorpdfstring{\Cref{thm:integrationByParts}}{Lemma~\ref{thm:integrationByParts}}}\label{sec:integrationByParts}
For the proof of \Cref{thm:integrationByParts}, we use the following observation.
\begin{lemma}\label{thm:tangentCharacterization}
Let $\alpha$ be a locally Lipschitz continuous func\-tion on~$M$. Then, for every $x\in{M}$, and any two curves $\gamma_1,\gamma_2$ passing through~$x$ at~$0$ that are differentiable at $0$, the limit of $(\alpha(\gamma_1(s))- \alpha(x))/s$ exists as $s$ tends to $0$ if and only if the limit of $(\alpha(\gamma_2(s))- \alpha(x))/s$ exists as $s$ tends to $0$, in which case both limits coincides.
\end{lemma}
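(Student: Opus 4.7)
The statement is a local one, so my first move is to pass to a smooth chart $(U,\varphi)$ around $x$ and identify a neighborhood of $x$ with an open subset of $\mathbb{R}^n$ in which $x$ becomes the origin. In this chart, local Lipschitz continuity of $\alpha$ supplies constants $L>0$ and $r>0$ such that $|\alpha(y)-\alpha(z)|\le L\,\|\varphi(y)-\varphi(z)\|$ for all $y,z$ in some neighborhood $V\subseteq U$ of $x$; I will shrink $V$ so that both $\gamma_1(s),\gamma_2(s)\in V$ for $|s|$ small, which is possible by continuity of the curves.

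The natural (and, I expect, intended) reading of the lemma is that $\gamma_1$ and $\gamma_2$ share the same tangent vector at $0$, i.e., $\dot{\gamma}_1(0)=\dot{\gamma}_2(0)$; without this, the conclusion that both limits coincide would fail on trivial examples. Under this hypothesis, differentiability at $0$ together with $\gamma_1(0)=\gamma_2(0)=x$ gives, in chart coordinates,
\[
\varphi(\gamma_i(s)) \ = \ s\,(\varphi\circ\gamma_i)\dot{\vphantom{.}}(0) + o(s) \qquad (i=1,2),
\]
and subtracting the two expansions yields $\|\varphi(\gamma_1(s))-\varphi(\gamma_2(s))\|=o(s)$ as $s\to 0$.

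Combining the Lipschitz estimate with this $o(s)$ bound, I obtain
\[
\left|\frac{\alpha(\gamma_1(s))-\alpha(x)}{s} - \frac{\alpha(\gamma_2(s))-\alpha(x)}{s}\right| \ \le \ \frac{L\,\|\varphi(\gamma_1(s))-\varphi(\gamma_2(s))\|}{|s|} \ \longrightarrow \ 0
\]
as $s\to 0$. Hence the two difference quotients differ by a quantity that tends to $0$, so if one limit exists then so does the other, and the two limits agree. This proves both directions of the equivalence simultaneously.

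I do not anticipate a serious obstacle: the only delicate point is framing the hypothesis on tangent vectors correctly, after which the argument is the standard ``Lipschitz functions ignore $o(s)$ perturbations of the curve'' observation. I will write out the chart reduction carefully to make clear that neither the choice of chart nor the specific constant $L$ affects the limiting behaviour, but beyond that the computation is a single line.
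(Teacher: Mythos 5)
Your proof is correct, and it is exactly the argument the paper has in mind — the paper simply dismisses the lemma with ``This statement follows immediately from the definition of local Lipschitz continuity,'' so the chart reduction, the first-order expansion $\varphi(\gamma_i(s))=\varphi(x)+s\,(\varphi\circ\gamma_i)\dot{\vphantom{.}}(0)+o(s)$, and the bound $|\alpha(\gamma_1(s))-\alpha(\gamma_2(s))|\le L\,\|\varphi(\gamma_1(s))-\varphi(\gamma_2(s))\|=o(|s|)$ are precisely the one-line computation being alluded to. You are also right to flag that the lemma as written omits the hypothesis $\dot{\gamma}_1(0)=\dot{\gamma}_2(0)$: without it the conclusion is false (take $\alpha=\operatorname{id}$ on $\mathbb{R}$, $\gamma_1(s)=s$, $\gamma_2(s)=2s$). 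That this is the intended reading is confirmed by how the lemma is invoked in the proof of \Cref{thm:integrationByParts}, where the two curves being compared — an integral curve of a vector field and the straight line through $\xi$ in the direction of that vector field in chart coordinates — always share the same velocity at the base point.
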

This statement follows immediately from the definition of local Lipschitz continuity.

Now we prove \Cref{thm:integrationByParts}. The claim that~\cref{eq:integrationByParts:A} has the existence and uniqueness property of solutions follows from Carath{\'e}odory's theorems (see, e.g,~\cite{HaleBook}) if we can show that the time-varying vector field on the right-hand side of~\cref{eq:integrationByParts:A} is a locally uniformly bounded and locally uniformly Lipschitz continuous Carath{\'e}odory vector field. This is clearly satisfied for $f_0$ and the $[f_I]$ because of properties~\ref{item:def:Sys:i} and~\ref{item:def:Sys:ii} in \Cref{def:Sys}. Since the functions $v_I$ are measurable and bounded by \Cref{def:inputs}, each of the maps $(t,x)\mapsto\frac{v_I(t)}{|I|}[f_I](t,x)$ is also a locally uniformly bounded and locally uniformly Lipschitz continuous Carath{\'e}odory vector field. Thus, the same is true for the right-hand side of~\cref{eq:integrationByParts:A}.

To prove the remaining assertions of \Cref{thm:integrationByParts}, fix an ordinary input $(u_i)_{i=1,\ldots,m}$, a solution $\gamma\colon{J}\to{M}$ of~\cref{eq:integrationByParts:B}, a multi-index~$I$ of length $0<|I|\leq{r}$, and a function $\alpha\in{C^{\infty}(M)}$. Note that because of condition~\ref{item:def:Sys:ii} in \Cref{def:Sys}, the function $f_I\alpha$ is locally Lipschitz continuous on the product manifold $\mathbb{R}\times{M}$. 
Moreover, the map $t\mapsto(t,\gamma(t))$ is locally absolutely continuous on~$J$. Thus, the composition $\Gamma\colon{J}\to\mathbb{R}$, $t\mapsto(f_I\alpha)(t,\gamma(t))$ of these two maps is again locally absolutely continuous on~$J$. 
It follows that the derivative $\dot{\Gamma}$ of $\Gamma$ exists almost everywhere on~$J$ and is Lebesgue integrable on compact subintervals of~$J$. Fix any $t\in{J}$ at which both $\gamma$ and $\Gamma$ are differentiable. The proof is complete if we can show that the Lie derivative of $f_I\alpha$ along $f_0$ exists at $(t,\gamma(t))$ with
\[
(f_0f_I\alpha)(t,\gamma(t)) \ = \ \dot{\Gamma}(t) - (\partial_tf_I\alpha)(t,\gamma(t)) - \sum_{i=1}^{m}u_i(t)(f_if_I\alpha)(t,\gamma(t)).
\]
Because of property~\ref{item:def:Sys:ii} in \Cref{def:Sys}, we know that the time derivative of $f_I\alpha$ exists as continuous function on $\mathbb{R}\times{M}$. Using the mean value theorem and the continuity of $\partial_tf_I\alpha$, we obtain
\[
\lim_{s\to0}\frac{1}{s}\big((f_I\alpha)(t+s,\gamma(t+s))-(f_I\alpha)(t,\gamma(t+s))\big) \ = \ \big(\partial_tf_I\alpha\big)\big(t,\gamma(t)\big).
\]
Therefore, it remains to prove that
\[
(f_0f_I\alpha)(t,x) \ = \ \lim_{s\to0}\frac{1}{s}\big((f_I\alpha)(t,\gamma(t+s))-(f_I\alpha)(t,x)\big) - \sum_{i=1}^{m}u_i(t)(f_if_I\alpha)(t,x),
\]
where $x:=\gamma(t)$. Note that all derivatives that appear in the above equation are taken at the fixed time~$t$. We introduce local representations with respect to an arbitrary chart $c=(U,\varphi)$ for~$M$ at~$x$. Let $h\colon\varphi(U)\to\mathbb{R}$ be the local representation of the function $x\mapsto{f_I\alpha(t,x)}$ on~$M$ with respect to $c$. Let $g_0\colon\varphi(U)\to\mathbb{R}^n$ be the local representation of the vector field $x\mapsto{f_0(t,x)}$ on~$M$ with respect to $c$, where $n$ is the dimension of~$M$. For each $i=1,\ldots,m$, let $g_i\colon\varphi(U)\to\mathbb{R}^n$ be the local representation of the vector field $x\mapsto{u_i(t)f_i(t,x)}$ on~$M$ with respect to~$c$. Note that the maps $h,g_0,g_1,\ldots,g_m$ are locally Lipschitz continuous. We know from \Cref{thm:tangentCharacterization} that if the Lie derivative of $f_I\alpha$ along $f_0$ exists at~$(t,x)$, then it is given by
\[
(f_0f_I\alpha)(t,x) \ = \ \lim_{s\to0}\frac{1}{s}\big(h(\xi+s\,g_0(\xi))-h(\xi)\big),
\]
where $\xi:=\varphi(x)\in\mathbb{R}^n$. Moreover by \Cref{thm:tangentCharacterization}, we have
\[
\lim_{s\to0}\frac{1}{s}\big((f_I\alpha)(t,\gamma(t+s))-(f_I\alpha)(t,x)\big) \ = \ \lim_{s\to0}\frac{1}{s}\big(h(\xi+sv)-h(\xi)\big)
\]
where $v:=(\varphi\circ\gamma)\dot{\phantom{.}}(t)\in\mathbb{R}^n$, and, for every $i=1,\ldots,m$,
\[
(g_ih)(\zeta) \ := \ \lim_{s\to0}\frac{1}{s}\big(h(\zeta+s\,g_i(\zeta))-h(\zeta)\big) \ = \ u_i(t)(f_if_I\alpha)(t,\varphi^{-1}(\zeta))
\]
defines a continuous function $g_ih$ on $\varphi(U)$. Thus, the proof is complete if we can show that
\[
\lim_{s\to0}\frac{1}{s}\big(h(\xi+s\,g_0(\xi))-h(\xi)\big) \ = \ \lim_{s\to0}\frac{1}{s}\big(h(\xi+sv)-h(\xi)\big) - \sum_{i=1}^{m}(g_ih)(\xi).
\]
Note that $v=g_0(\xi) + \sum_{i=1}^{m}g_i(\xi)$ by the defining differential equation~\cref{eq:integrationByParts:B} of~$\gamma$. Thus, we can write
\[
\frac{1}{s}\big(h(\xi+sv)-h(\xi)\big) \ = \ \frac{1}{s}\big(h(\xi+s\,g_0(\xi))-h(\xi)\big) + \sum_{i=1}^{m}\frac{1}{s}\big(h\big(\xi_i(s)+sg_i(\xi)\big)-h\big(\xi_i(s)\big)\big)
\]
with $\xi_i(s):=\xi+s\sum_{a=0}^{i-1}g_a(\xi)$ for $s$ sufficiently close to $0$. Since we know that the limit of the difference quotient on the left-hand side of the above equation exists when $s$ tends to $0$, the proof is complete if we can show that
\[
\lim_{s\to0}\frac{1}{s}\big(h\big(\xi_i(s)+sg_i(\xi)\big)-h\big(\xi_i(s)\big)\big) \ = \ (g_ih)(\xi)
\]
for $i=1,\ldots,m$. Since each $g_i$ is locally Lipschitz continuous, for every $\zeta\in\varphi(U)$, there exists a unique maximal integral curve $\tau\mapsto\Phi_i(\zeta,\tau)$ of $g_i$ that passes through $\zeta$ at time $0$. By applying the mean value theorem to the function $\tau\mapsto{h(\Phi_i(\xi_i(s),\tau))}$, we obtain that for every $s\in\mathbb{R}$ sufficiently close to $0$, there exists $\tau_s\in\mathbb{R}$ between $0$ and $s$ such that
\[
h\big(\Phi_i(\xi_i(s),s)\big) \ = \ h\big(\xi_i(s)\big) + s\,(g_ih)\big(\Phi_i(\xi_i(s),\tau_s)\big)
\]
Using the continuity of the flow map $\Phi_i$, this implies
\[
\lim_{s\to0}\frac{1}{s}\big(h\big(\Phi_i(\xi_i(s),s)\big) - h\big(\xi_i(s)\big)\big) \ = \ (g_ih)(\xi).
\]
It is therefore left to prove that
\[
\lim_{s\to0}\frac{1}{s}\big(h(\xi_i(s)+sg_i(\xi))-h(\Phi_i(\xi_i(s),s))\big) \ = \ 0.
\]
Since $h$ is locally Lipschitz continuous, this follows if we can show that
\[
\lim_{s\to0}\frac{1}{s}\big(\Phi_i(\xi_i(s),s)-\xi_i(s)\big) \ = \ g_i(\xi).
\]
By applying the mean value theorem to each component of the map $\tau\mapsto\Phi_i(\xi_i(s),\tau)$ whose derivative is $\tau\mapsto{g_i(\Phi_i(\xi_i(s),\tau))}$, and because of the continuity of $\Phi_i$, one obtains that the above equation is indeed true. This completes the proof.

\subsection{Proof of \texorpdfstring{\Cref{thm:outputConvergenceResult}}{Theorem~\ref{thm:outputConvergenceResult}}}\label{sec:outputConvergenceResult}
Our aim is to show that \Cref{thm:outputConvergenceResult} is just a particular case of \Cref{thm:convergenceResult}. To apply \Cref{thm:convergenceResult}, we have to prove that $(f_0,f_1,\ldots,f_m)$ is an element of $\operatorname{Sys}(M;m,r)$, and that the boundedness conditions of the Lie derivatives in \Cref{thm:convergenceResult} are satisfied. Then, statements~\ref{thm:outputConvergenceResult:A} and~\ref{thm:outputConvergenceResult:B} in \Cref{thm:outputConvergenceResult} follow from \Cref{thm:convergenceResult} for $b\equiv1$ and $b=\sqrt{\psi}$, respectively. The main part of the proof is devoted to show that $(f_0,f_1,\ldots,f_m)$ satisfies conditions~\ref{item:def:Sys:i}-\ref{item:def:Sys:iii} in \Cref{def:Sys}. In particular, we have to show that the iterated Lie derivatives $f_I\alpha$ exist on the entire manifold~$M$ as locally uniformly Lipschitz continuous time-varying functions. For this purpose, we use the following criterion for local uniform Lipschitz continuity.
\begin{lemma}[\cite{Durand2010}]\label{thm:LipschitzCriterion}
A time-varying function~$\beta$ on~$M$ is locally uniformly Lipschitz continuous if and only if for every chart $c=(U,\varphi)$ for~$M$, the \emph{pointwise Lipschitz constant of~$\beta$ with respect to $c$},
\[
\operatorname{Lip}_c(\beta)(t,x) \ := \ \limsup_{x'\to{x}}\frac{|\beta(t,x')-\beta(t,x)|}{\|\varphi(x')-\varphi(x)\|},
\]
defines a locally uniformly bounded time-varying function on~$U$. If $\beta_1,\beta_2,\beta$ are locally uniformly Lipschitz continuous time-varying functions on~$M$, and if $h$ is a $C^1$ function on $\mathbb{R}$, then also $\beta_1+\beta_2,\beta_1\beta_2,h\circ\beta$ are locally uniformly Lipschitz on~$M$, and we have
\begin{align*}
\operatorname{Lip}_c(\beta_1+\beta_2) & \ \leq \ \operatorname{Lip}_c(\beta_1) + \operatorname{Lip}_c(\beta_2), \allowdisplaybreaks \\
\operatorname{Lip}_c(\beta_1\beta_2) & \ \leq \ \operatorname{Lip}_c(\beta_1)\,|\beta_2| + |\beta_1|\,\operatorname{Lip}_c(\beta_2), \allowdisplaybreaks \\
\operatorname{Lip}_c(h\circ\beta) & \ \leq \ |h'\circ\beta|\,\operatorname{Lip}_c(\beta)
\end{align*}
on $\mathbb{R}\times{U}$.
\end{lemma}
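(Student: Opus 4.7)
The plan is to split the proof into two parts: the chart-wise $\operatorname{Lip}_c$ characterization of local uniform Lipschitz continuity (the ``iff'') and the three calculus rules.

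First I would handle the characterization. For the \emph{only if} direction, assume $\beta$ is locally uniformly Lipschitz continuous, fix a chart $c=(U,\varphi)$ and a point $x\in U$. By hypothesis there exist another chart $(U',\varphi')$ at $x$ and $L>0$ with $|\beta(t,z)-\beta(t,y)|\leq L\|\varphi'(z)-\varphi'(y)\|$ for all $t$ and all $y,z\in U'$. Shrinking $U'$ so that $U'\subseteq U$ and the smooth transition map $\varphi'\circ\varphi^{-1}$ has bounded Jacobian on $\varphi(U')$, one gets $\|\varphi'(z)-\varphi'(y)\|\leq L'\|\varphi(z)-\varphi(y)\|$, yielding $\operatorname{Lip}_c(\beta)(t,x)\leq LL'$ uniformly in $t$ and on a neighborhood of~$x$ in~$U$. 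For the \emph{if} direction, fix $x_0\in M$ and choose a chart $c=(U,\varphi)$ at $x_0$ with $\varphi(U)$ convex, together with a smaller neighborhood $V\subseteq U$ on which $\operatorname{Lip}_c(\beta)(t,\cdot)\leq L$ uniformly in $t$. For $y,z\in V$ I parametrize the line segment $\sigma(s)=(1-s)\varphi(y)+s\varphi(z)$ and set $g(s):=\beta(t,\varphi^{-1}(\sigma(s)))$. The assumption gives a pointwise bound $\limsup_{s'\to s}|g(s')-g(s)|/|s'-s|\leq L\|\varphi(z)-\varphi(y)\|$ at every $s\in[0,1]$, uniformly in~$t$, and the standard one-dimensional lemma (a continuous function on $[0,1]$ with pointwise upper Dini derivative bounded by $K$ is $K$-Lipschitz, proved by a sup-argument on the set $\{s:|g(s)-g(0)|\leq(K+\varepsilon)s\}$) yields $|\beta(t,z)-\beta(t,y)|\leq L\|\varphi(z)-\varphi(y)\|$.

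Next I would verify the calculus rules by computing the relevant limsup directly. The sum rule is an immediate consequence of the triangle inequality inside the limsup that defines $\operatorname{Lip}_c(\beta_1+\beta_2)$. The product rule uses the splitting
\[
\beta_1(t,x')\beta_2(t,x')-\beta_1(t,x)\beta_2(t,x) \ = \ \bigl(\beta_1(t,x')-\beta_1(t,x)\bigr)\beta_2(t,x') + \beta_1(t,x)\bigl(\beta_2(t,x')-\beta_2(t,x)\bigr)
\]
combined with the fact that $\beta_2(t,x')\to\beta_2(t,x)$ as $x'\to x$ (continuity in~$x$ follows from finiteness of $\operatorname{Lip}_c(\beta_2)(t,x)$, since $|\beta_2(t,x')-\beta_2(t,x)|=O(\|\varphi(x')-\varphi(x)\|)$). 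Dividing by $\|\varphi(x')-\varphi(x)\|$ and taking $\limsup$ yields the asserted bound. The composition rule follows from the mean value theorem: for $x'$ close to~$x$ there is a $\xi$ between $\beta(t,x)$ and $\beta(t,x')$ with $h(\beta(t,x'))-h(\beta(t,x))=h'(\xi)\bigl(\beta(t,x')-\beta(t,x)\bigr)$, and $h'(\xi)\to h'(\beta(t,x))$ by continuity of $h'$ together with $\beta(t,x')\to\beta(t,x)$. In each of the three cases the derived pointwise bound on $\operatorname{Lip}_c$ is locally uniformly bounded whenever the ingredients are, and the first part of the lemma upgrades this to local uniform Lipschitz continuity of the composed object.

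The main obstacle is the \emph{if} direction of the characterization, since it requires converting a merely pointwise estimate into a uniform one on a neighborhood. Working in a convex chart reduces this to the one-dimensional Dini derivative statement above, and the key point is that the pointwise bound on $\operatorname{Lip}_c(\beta)(t,x)$ is by hypothesis uniform in $t$, so the chain-of-balls / Dini-derivative argument on the line segment runs uniformly in $t$ and delivers the desired local Lipschitz constant. The remaining subtleties (transferring bounds between charts, handling products and compositions where an implicit local uniform boundedness of $|\beta_i|$ and $|h'\circ\beta|$ is used) are routine.
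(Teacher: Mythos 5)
Your proposal is correct and follows the same route the paper takes: the paper simply defers the chart-wise characterization to the time-independent argument in the cited reference and asserts that the three estimates ``follow directly from the definition of the upper limit,'' which is exactly what your triangle-inequality, product-splitting, and mean-value-theorem computations make explicit, while your convex-chart/Dini-derivative argument supplies the omitted proof of the ``if'' direction. The one caveat you flag yourself is real --- local uniform Lipschitz continuity alone does not give local uniform boundedness of $|\beta_i|$ or $|h'\circ\beta|$, so the product and composition claims implicitly assume it --- but this is a gap in the lemma's statement that the paper's own two-line proof does not address either, and it is harmless in the paper's applications where all functions involved are assumed locally uniformly bounded.
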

\begin{proof}
The criterion for local uniform Lipschitz continuity can be proved in the same way as it is done in~\cite{Durand2010} for time-independent maps. We omit this here. The asserted estimates for the pointwise Lipschitz constant follow directly from the definition of the upper limit.
\end{proof}
Another property that we have to verify for the proof of \Cref{thm:outputConvergenceResult} is that the Lie derivatives of the $f_I\alpha$ along the drift $f_0$ satisfy the boundedness assumption in \Cref{thm:convergenceResult} in the sense of \Cref{thm:boundedAE}. Since the existence of these Lie derivatives is not guaranteed everywhere, we consider instead the following upper bound.
\begin{lemma}\label{thm:setValuedLieDerivative}
Let~$f$ be a time-varying vector field on~$M$ with the existence and uniqueness property of integral curves, and let $\Phi$ denote the flow of~$f$. Let~$\beta$ be a locally uniformly Lipschitz continuous time-varying function on~$M$. Then, for every $(t,x)\in\mathbb{R}\times{M}$, the limit
\[
\operatorname{L}(f,\beta)(t,x) \ := \ \limsup_{s\to0}\frac{1}{|s|}\big|\beta\big(t,\Phi(t+s,t,x)\big)\big)-\beta(t,x)\big|
\]
exists. Moreover, if $\beta_1,\beta_2$ are locally uniformly Lipschitz continuous time-varying functions on~$M$, and if $h$ is a $C^1$ function on $\mathbb{R}$, then we have
\begin{align*}
\operatorname{L}(f,\beta_1+\beta_2) & \ \leq \ \operatorname{L}(f,\beta_1) + \operatorname{L}(f,\beta_2), \allowdisplaybreaks \\
\operatorname{L}(f,\beta_1\beta_2) & \ \leq \ \operatorname{L}(f,\beta_1)\,|\beta_2| + |\beta_1|\,\operatorname{L}(f,\beta_2), \allowdisplaybreaks \\
\operatorname{L}(f,h\circ\beta) & \ \leq \ |h'\circ\beta|\,\operatorname{L}(f,\beta)
\end{align*}
on $\mathbb{R}\times{M}$.
\end{lemma}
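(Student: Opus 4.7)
The plan is to separate the two claims of the lemma. The ``existence'' of $\operatorname{L}(f,\beta)(t,x)$ is automatic once one observes that the expression inside is nonnegative, so the $\limsup$ exists in $[0,+\infty]$; I would state this in one line and concentrate on the three algebraic bounds. Throughout I abbreviate $y_s := \Phi(t+s,t,x)$ and use two standing facts: first, $y_s \to x$ as $s\to 0$, because every integral curve of $f$ is continuous at its initial time; second, local uniform Lipschitz continuity of each of $\beta,\beta_1,\beta_2$ implies (in particular) continuity of $\beta(t,\cdot)$, $\beta_1(t,\cdot)$, $\beta_2(t,\cdot)$ at $x$, hence $\beta(t,y_s)\to\beta(t,x)$, etc.

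For the sum estimate, the triangle inequality gives
\[
\bigl|(\beta_1+\beta_2)(t,y_s)-(\beta_1+\beta_2)(t,x)\bigr| \ \le \ |\beta_1(t,y_s)-\beta_1(t,x)| + |\beta_2(t,y_s)-\beta_2(t,x)|,
\]
and dividing by $|s|$ and invoking subadditivity of $\limsup$ yields the bound. For the product estimate, I split
\[
\beta_1(t,y_s)\beta_2(t,y_s) - \beta_1(t,x)\beta_2(t,x) \ = \ \bigl(\beta_1(t,y_s)-\beta_1(t,x)\bigr)\beta_2(t,y_s) + \beta_1(t,x)\bigl(\beta_2(t,y_s)-\beta_2(t,x)\bigr),
\]
divide by $|s|$, and take $\limsup$. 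On the first summand the factor $|\beta_2(t,y_s)|$ converges to $|\beta_2(t,x)|$ as $s\to 0$, and it can be pulled outside the $\limsup$ by the elementary identity $\limsup_{s\to 0}(a_s b_s) = a\,\limsup_{s\to 0} b_s$ valid whenever $b_s \ge 0$ and $a_s\to a\ge 0$. For the composition, I apply the mean value theorem to obtain some $\xi_s$ between $\beta(t,x)$ and $\beta(t,y_s)$ with
\[
h(\beta(t,y_s))-h(\beta(t,x)) \ = \ h'(\xi_s)\bigl(\beta(t,y_s)-\beta(t,x)\bigr),
\]
note $\xi_s\to\beta(t,x)$ whence $h'(\xi_s)\to h'(\beta(t,x))$ by continuity of $h'$, and again pull the convergent factor $|h'(\xi_s)|$ past the $\limsup$ using the same identity.

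The only non-routine point is the convergent-times-limsup identity, which I would record once as an auxiliary remark at the start of the proof, then invoke in both the product and composition cases. Once it is in place, each of the three estimates is a one-line manipulation, and there is no dynamical content beyond the continuity of $s\mapsto y_s$ at $s=0$, which comes for free from the mere existence of integral curves.
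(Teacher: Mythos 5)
Your three estimates are essentially the paper's intended argument (the paper's own proof simply says "the asserted estimates follow directly from the definition of the upper limit"), and the decompositions you use for the sum, product, and chain rule are the standard ones one would expect. The one place where you diverge from the paper is the "existence" clause: you dispose of it by noting that the $\limsup$ of a nonnegative family always exists in $[0,+\infty]$, whereas the paper cites \Cref{thm:LipschitzCriterion} for this step, which signals that "exists" is meant as \emph{finite}. This is not merely cosmetic: your product and composition steps rest on the identity $\limsup_{s\to0}(a_s b_s)=a\,\limsup_{s\to0}b_s$ for $a_s\to a\ge0$, $b_s\ge0$, and that identity genuinely fails when $a=0$ and $\limsup_s b_s=+\infty$ (take $a_s=|s|$, $b_s=1/s^2$). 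So to obtain the asserted inequalities as stated — e.g.\ $\operatorname{L}(f,\beta_1\beta_2)\le\operatorname{L}(f,\beta_1)|\beta_2|+|\beta_1|\operatorname{L}(f,\beta_2)$ at a point where $\beta_2$ vanishes — you need finiteness of $\operatorname{L}(f,\beta_1)$ (and similarly for the chain-rule case when $h'\circ\beta$ vanishes). The fix is exactly what the paper gestures at: working in a chart, $\frac{1}{|s|}|\beta(t,y_s)-\beta(t,x)|$ is bounded by $\operatorname{Lip}_c(\beta)(t,x)\cdot\frac{1}{|s|}\|\varphi(y_s)-\varphi(x)\|$, and the first factor is finite by \Cref{thm:LipschitzCriterion}. (One still needs the flow increment $\|\varphi(y_s)-\varphi(x)\|/|s|$ to stay bounded for small $s$; this uses that the integral curve is locally absolutely continuous with locally bounded derivative, a hypothesis that the paper leaves implicit and that holds in all the contexts where this lemma is applied.) With that finiteness in hand, your three one-line manipulations go through, and the remainder of the proposal is correct.
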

\begin{proof}
Existence of the limit $\operatorname{L}(f,\beta)(t,x)$ follows immediately from \Cref{thm:LipschitzCriterion}. The asserted estimates follow directly from the definition of the upper limit.  
\end{proof}
We conclude the preparations for the proof of \Cref{thm:outputConvergenceResult} by summarizing important properties of the non\-negative~$C^r$ function~$\psi$ on~$M$ with $r\geq2$.
\begin{lemma}\label{thm:LipschitzDerivative}
The function $\sqrt{\psi}$ is locally Lipschitz continuous, and, with respect to every chart $c=(U,\varphi)$ for~$M$, the pointwise Lipschitz constant $\operatorname{Lip}_c(\psi)$ is locally bounded a multiple of $\sqrt{\psi}$ on~$U$.
\end{lemma}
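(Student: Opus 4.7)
The plan is to reduce both assertions to a classical local-analytic fact about non\-negative $C^2$ functions on Euclidean space, commonly called the Glaeser inequality. Since both claims are local, I would fix a chart $c=(U,\varphi)$ and write $\tilde\psi:=\psi\circ\varphi^{-1}$, which is $C^r$ (with $r\geq 2$) and non\-negative on $\varphi(U)\subseteq\mathbb{R}^n$. For an arbitrary $x_0\in U$ choose nested closed Euclidean balls $B\subsetneq B'\subseteq\varphi(U)$ with $\varphi(x_0)\in B$, and let $M\geq 0$ be an upper bound for the operator norm of the Hessian $D^2\tilde\psi$ on $B'$. The Glaeser inequality then reads
\begin{equation*}
\|\nabla\tilde\psi(\xi)\|^{2} \ \leq \ 2\,M\,\tilde\psi(\xi) \qquad\text{for every }\xi\in B.
\end{equation*}
Its standard proof goes as follows: fix $\xi\in B$ with $\nabla\tilde\psi(\xi)\neq 0$, let $w$ be the unit vector in the gradient direction, and apply Taylor's theorem with Lagrange remainder to $h(s):=\tilde\psi(\xi-s\,w)$ to obtain $0\leq h(s)\leq\tilde\psi(\xi)-\|\nabla\tilde\psi(\xi)\|\,s+\tfrac{M}{2}\,s^{2}$ for every admissible $s\geq 0$; minimizing the right-hand side over $s$ yields the asserted inequality. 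By choosing $B$ small enough inside $B'$ relative to the (bounded) value of $\|\nabla\tilde\psi\|$ on $B$, the minimizing $s$ remains admissible, so the argument applies uniformly on $B$. The case $\nabla\tilde\psi(\xi)=0$ is trivial.

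With this in hand, the second assertion of the lemma is immediate: since $\tilde\psi$ is $C^1$, the pointwise Lipschitz constant of $\psi$ at $x\in\varphi^{-1}(B)$ with respect to $c$ coincides with $\|\nabla\tilde\psi(\varphi(x))\|$, hence
\begin{equation*}
\operatorname{Lip}_c(\psi)(x) \ = \ \|\nabla\tilde\psi(\varphi(x))\| \ \leq \ \sqrt{2\,M}\,\sqrt{\psi(x)},
\end{equation*}
which is the desired bound by a multiple of $\sqrt{\psi}$ on the neighborhood $\varphi^{-1}(B)$ of $x_0$.

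For the local Lipschitz continuity of $\sqrt{\psi}$, the guiding idea is that $\nabla\sqrt{\tilde\psi}=\nabla\tilde\psi/(2\sqrt{\tilde\psi})$ has norm at most $\sqrt{M/2}$ on $\{\tilde\psi>0\}$ by the Glaeser bound, but $\sqrt{\tilde\psi}$ need not be differentiable on the zero set of $\tilde\psi$. I would sidestep this by an $\varepsilon$-regularization: for every $\varepsilon>0$, the function $\sqrt{\tilde\psi+\varepsilon}$ is $C^{2}$ on $B$ with gradient $\nabla\tilde\psi/(2\sqrt{\tilde\psi+\varepsilon})$, whose norm is bounded by
\begin{equation*}
\frac{\sqrt{2\,M\,\tilde\psi}}{2\sqrt{\tilde\psi+\varepsilon}} \ \leq \ \frac{\sqrt{2\,M\,(\tilde\psi+\varepsilon)}}{2\sqrt{\tilde\psi+\varepsilon}} \ = \ \sqrt{M/2}
\end{equation*}
uniformly in $\varepsilon>0$. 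Integrating this gradient bound along the straight segment between any two points $\xi,\eta\in B$ gives $|\sqrt{\tilde\psi(\xi)+\varepsilon}-\sqrt{\tilde\psi(\eta)+\varepsilon}|\leq\sqrt{M/2}\,\|\xi-\eta\|$, and letting $\varepsilon\downarrow 0$ yields the Lipschitz estimate for $\sqrt{\tilde\psi}$ on $B$, and hence for $\sqrt{\psi}$ on the corresponding neighborhood of $x_0$ in $U$.

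The main technical obstacle is precisely the non\-smoothness of $\sqrt{\psi}$ on its zero set; the $\varepsilon$-regularization above (or the equivalent device of covering the line segment by open subintervals on which $\tilde\psi>0$ and appealing to continuity of $\sqrt{\tilde\psi}$ at the boundary points) is what converts a pointwise gradient bound valid only off the zero set into a genuine global Lipschitz estimate. Once Glaeser's inequality is in place, everything else is a routine computation.
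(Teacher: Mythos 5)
Your proof is correct and follows essentially the same route as the paper: both rest on the Glaeser inequality for nonnegative $C^2$ functions, which gives the gradient bound $\|\nabla\tilde\psi\|\leq\sqrt{2M\tilde\psi}$ (hence the claimed bound on $\operatorname{Lip}_c(\psi)$) and, after the $\varepsilon$-regularization step you carry out, the local Lipschitz continuity of $\sqrt{\psi}$. The only difference is that the paper cites this fact from the literature (Lerner's book, Section~4.3.5) rather than proving it, whereas you supply a self-contained derivation via Taylor's theorem and a careful choice of nested balls so that the minimizing step length stays admissible.
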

\begin{proof}
It is known (see, e.g., Section~4.3.5 in~\cite{LernerBook}) that the square root of every~$C^2$ function~$\alpha$ on an open subset of the Euclidean space is locally Lipschitz continuous, and that the derivative of~$\alpha$ is locally bounded by a multiple of~$\sqrt{\alpha}$. This holds in particular for every local representation of~$\psi$. The claim follows from the observation that the pointwise Lipschitz constant of $\psi$ with respect to a chart $(U,\varphi)$ for~$M$ at $x\in{M}$ is equal to the operator norm of the derivative of $\psi\circ\varphi^{-1}\colon\varphi(U)\to\mathbb{R}$ at $\varphi(x)$.
\end{proof}
Throughout the proof of \Cref{thm:outputConvergenceResult}, we let $E:=\psi^{-1}(0)\subseteq{M}$ denote the possibly empty set of points of~$M$ at which the non-negative function~$\psi$ attains the value~$0$, and we let $M_{>}:=M\setminus{E}$ denote the open submanifold of~$M$ on which~$\psi$ is strictly positive. For the sake of clarity, we divide the proof \Cref{thm:outputConvergenceResult} into a sequence of lemmas.
\begin{lemma}\label{thm:Lemma00}
For each $i=1,\ldots,m$, the function $h_i\circ\psi$ is locally Lipschitz continuous.
\end{lemma}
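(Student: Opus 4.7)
Fix $i\in\{1,\ldots,m\}$. In the first case, $h_i\equiv 1$ on $(0,\infty)$ and continuity forces $h_i(0)=1$; since $\psi\geq 0$, the composition $h_i\circ\psi$ is identically $1$ on $M$, which is trivially locally Lipschitz. So from now on I assume we are in the second case. The plan is to verify local Lipschitz continuity at each point $x_0\in M$, splitting according to whether $\psi(x_0)>0$ or $\psi(x_0)=0$.

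On the open submanifold $M_{>}:=\{\psi>0\}$ the argument is routine: each $x_0\in M_{>}$ admits a neighborhood $V$ on which $\psi$ takes values in a compact subinterval of $(0,\infty)$. Since $h_i$ is of class $C^r$ on $(0,\infty)$, it is Lipschitz on this subinterval, and $\psi$ is smooth, so $h_i\circ\psi$ is locally Lipschitz on $V$.

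The essential case is $x_0\in E:=\psi^{-1}(0)$. Here the pointwise bound implied by the hypothesis at $\nu=1$, namely $|h_i'(y)|\leq C/\sqrt{y}$ for $y$ small and positive, blows up as $y\downarrow 0$, so a naive mean value estimate applied directly to $h_i$ along $\psi$ is useless. The idea is to absorb the singularity via the substitution $z=\sqrt{y}$. Define $g\colon[0,\infty)\to\mathbb{R}$ by $g(z):=h_i(z^2)$ for $z>0$ and $g(0):=h_i(0)$. Then $g$ is continuous, $C^1$ on $(0,\infty)$, and
\[
|g'(z)| \ = \ 2z\,|h_i'(z^2)| \ = \ 2\sqrt{z^2}\,|h_i'(z^2)| \ \leq \ 2C
\]
for all $z$ in some interval $(0,\delta)$, by the $\nu=1$ hypothesis applied at $y=z^2$. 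Consequently $g$ is Lipschitz on $[0,\delta]$. On any neighborhood of $x_0$ small enough that $\sqrt{\psi}$ takes values in $[0,\delta]$ (which exists because $\psi$ is continuous and vanishes at $x_0$), we have the identity $h_i\circ\psi=g\circ\sqrt{\psi}$. Lemma~\ref{thm:LipschitzDerivative} states that $\sqrt{\psi}$ is locally Lipschitz continuous; composition with the Lipschitz function $g$ then yields local Lipschitz continuity of $h_i\circ\psi$ near $x_0$.

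The main obstacle is the vanishing of $\psi$ on $E$, which makes $h_i\circ\psi$ generically non-differentiable there. The substitution $z=\sqrt{y}$ is the device that exactly converts the singular bound on $h_i'$ into a uniform bound on $g'$, which then meshes with the local Lipschitz continuity of $\sqrt{\psi}$ supplied by Lemma~\ref{thm:LipschitzDerivative}.
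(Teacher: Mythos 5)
Your proof is correct and follows essentially the same route as the paper: both define an auxiliary function of $z=\sqrt{y}$ (your $g$, the paper's $H_i$ with $H_i(s):=h_i(s^2)$ for $s>0$), bound its derivative via the hypothesis on $h_i'$, and compose with $\sqrt{\psi}$ using \Cref{thm:LipschitzDerivative}. A minor improvement on your side: you dispose of the case $h_i\equiv1$ on $(0,\infty)$ separately (where $h_i\circ\psi\equiv1$ by continuity), whereas the paper's $H_i$-construction, which sets $H_i(s)=0$ for $s\le0$, only matches $h_i\circ\psi$ when $h_i(0)=0$, i.e.\ in the second case.
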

\begin{proof}
We write $h_i\circ\psi$ as a composition of locally Lipschitz continuous maps. For this purpose define $H_i\colon\mathbb{R}\to\mathbb{R}$ by $H_i(s):=0$ for $s\leq0$ and by $H_i(s):=h_i(s^2)$ for $s>0$. Then we have $h_i\circ\psi=H_i\circ\sqrt{\psi}$. We already know from \Cref{thm:LipschitzDerivative} that $\sqrt{\psi}$ is locally Lipschitz continuous. Since $H_i$ is of class $C^r$ outside the origin, it is left to prove that $H_i$ is Lipschitz continuous in a small neighborhood of the origin. By assumptions on $h_i$, and by definition of $H_i$, such that $H_i(s)/s$ as well as $H_i'(s)$ remain bounded as $s\to0$. This implies that the pointwise Lipschitz constant of $H_i$ (with respect to the standard chart for $\mathbb{R}$) is bounded on a small interval around the origin. It follows from \Cref{thm:LipschitzCriterion} that $H_i$ is locally Lipschitz.
\end{proof}
Next, we derive boundedness properties of the iterated Lie derivatives of $\psi$ along the vector fields $e_i$, $i=1,\ldots,m$. Since we apply the $e_i$ successively from the left to $\psi$, it is more convenient to numerate the indices in a multi-index $I$ from the right to the left. This means that we write $I=(i_{\ell},\ldots,i_1)$ instead of $I=(i_1,\ldots,i_{\ell})$. We use this notation in the rest of the proof.
\begin{lemma}\label{thm:Lemma01}
Let $I=(i_{\ell},\ldots,i_1)$ be a multi-index of length $0<\ell\leq{r}$ with $i_1,\ldots,i_{\ell}\in\{1,\ldots,m\}$. The following statements hold.
\begin{enumerate}[label=(\alph*)]
	\item\label{thm:Lemma01:A} If $\ell<r$, then $e_I\psi$ exists as a locally uniformly bounded and locally uniformly Lipschitz continuous time-varying function on~$M$.
	\item\label{thm:Lemma01:B} If $\ell=r$, then $e_I\psi$ exists as a locally uniformly bounded Carath{\'e}odory function on~$M$.
	\item\label{thm:Lemma01:C} The time derivative $\partial_te_I\psi$ exists as a locally uniformly bounded time-varying function on~$M$ and is continuous as a function on $\mathbb{R}\times{M}$.
	\item\label{thm:Lemma01:D} If $e_{i_2},\ldots,e_{i_{\ell}}$ are tangent to the fibers of~$\pi$, then the time-varying functions in~\ref{thm:Lemma01:A}-\ref{thm:Lemma01:C} are locally uniformly bounded by a multiple of $\sqrt{\psi}$.
	\item\label{thm:Lemma01:E} If $\ell<r$, and if $f_0$ and $e_{i_2},\ldots,e_{i_{\ell}}$ are tangent to the fibers of~$\pi$, then $\operatorname{L}(f_0,e_I\psi)$ is locally uniformly bounded by a multiple of $\sqrt{\psi}$.
\end{enumerate}
\end{lemma}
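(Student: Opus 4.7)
I would prove \Cref{thm:Lemma01} by induction on $\ell=|I|$, working in local coordinates. The starting observation is that in a chart $(U,\varphi)$ on $M$, the iterated Lie derivative admits a Leibniz-type expansion
\[
(e_I\psi)(t,x) \ = \ \sum_{|K|\le\ell} c_I^K(t,x)\,\bigl(\partial^K(\psi\circ\varphi^{-1})\bigr)(\varphi(x)),
\]
where the coefficients $c_I^K$ are polynomial combinations of iterated mixed partial derivatives of the components of $e_{i_1},\ldots,e_{i_\ell}$. Each coefficient can be rewritten, via the chain and product rules, as an iterated Lie derivative $e_{I_0}\alpha$ of a smooth function $\alpha$ built from smooth bump-extensions of coordinate functions, with $|I_0|\le\ell$. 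The regularity claimed for $c_I^K$ therefore follows directly from the defining properties of $\operatorname{Sys}(M;m,r)$, combined with the product/chain rules of \Cref{thm:LipschitzCriterion} for the pointwise Lipschitz constant. Since $\psi\in C^r$, each $\partial^K(\psi\circ\varphi^{-1})$ is $C^{r-|K|}$, hence locally Lipschitz as long as $|K|<r$ and merely continuous for $|K|=r$. Parts (a) and (c) follow for $\ell<r$ by combining locally Lipschitz $c_I^K$ with locally Lipschitz derivatives of $\psi$, and by differentiating in $t$, using the continuity and local boundedness of $\partial_t c_I^K$ provided by the $\operatorname{Sys}$ hypothesis. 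Part (b) comes for $\ell=r$, where the top-order term $|K|=r$ contributes a merely Carathéodory factor, yielding a Carathéodory but locally bounded $e_I\psi$.

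For the subtler parts (d) and (e), I would pick coordinates $(x',x'')$ adapted to the submersion $\pi$, so that $\pi$ projects onto $x'$. Since $\psi$ is constant on the fibers of $\pi$, it depends only on $x'$, say $\psi(x)=\tilde\psi(x')$, and any fiber-tangent vector field has only $\partial_{x''}$-components. Under the hypothesis of (d), every $e_{i_j}$ with $j\ge 2$ is fiber-tangent and differentiates only in $x''$; because $\psi$ is independent of $x''$, no derivative of $\psi$ of order $\ge 2$ can be produced. A short induction on $j$ (peeling off the $e_{i_j}$ from the outside) then yields the explicit form
\[
(e_I\psi)(t,x) \ = \ \sum_{k'}\bigl(e_{i_\ell}e_{i_{\ell-1}}\cdots e_{i_2}(e_{i_1}^{k'})\bigr)(t,x)\,(\partial_{x'_{k'}}\tilde\psi)(x'),
\]
where $e_{i_1}^{k'}(t,\cdot)$ is the $x'_{k'}$-component of $e_{i_1}(t,\cdot)$, equal to $e_{i_1}(\widetilde{x'_{k'}})$ for a smooth bump-extension $\widetilde{x'_{k'}}$ of $x'_{k'}$. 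Thus the bracketed coefficient is an iterated Lie derivative of a smooth function along a multi-index of length $\ell\le r$, locally uniformly bounded by the $\operatorname{Sys}$ hypothesis, and Glaeser's inequality $|\partial_{x'_{k'}}\tilde\psi|\le C\sqrt{\tilde\psi}=C\sqrt\psi$ from \Cref{thm:LipschitzDerivative} gives (d). The time derivative $\partial_t e_I\psi$ admits the same formula with $\partial_t$ moved onto the coefficient, and the same bound applies.

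For (e), the added hypothesis that $f_0$ is tangent to the fibers of $\pi$ means its flow $\Phi$ preserves fibers, so $\pi\circ\Phi(t+s,t,x)=\pi(x)$. Using the display for (d) and the fact that $\partial_{x'_{k'}}\tilde\psi$ depends only on $x'$, the difference quotient defining $\operatorname{L}(f_0,e_I\psi)$ reduces to
\[
\frac{1}{|s|}\sum_{k'}\bigl|c_I^{k'}(t,\Phi(t+s,t,x))-c_I^{k'}(t,x)\bigr|\,|\partial_{x'_{k'}}\tilde\psi(x')|,
\]
with $c_I^{k'}=e_{i_\ell}\cdots e_{i_2}(e_{i_1}^{k'})$. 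Because $\ell<r$, the $c_I^{k'}$ are locally Lipschitz by the $\operatorname{Sys}$ hypothesis, and $f_0$ is locally bounded, so the first factor stays locally bounded as $s\to 0$; the second factor is $O(\sqrt\psi)$ by Glaeser, which gives the claim after taking $\limsup$. The main technical obstacle throughout is the careful bookkeeping needed to connect the abstract $\operatorname{Sys}$ hypothesis, phrased through action on arbitrary smooth functions, to the explicit regularity of the vector-field components and their iterated partial derivatives appearing in the Leibniz expansion; this is handled systematically by testing against smooth bump-extensions of coordinate functions and invoking \Cref{thm:LipschitzCriterion} to recombine the pieces.
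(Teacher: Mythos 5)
Your proof is correct and follows essentially the same route as the paper: for (a)--(c) the paper simply declares the claims "immediate" from the $\operatorname{Sys}$ hypothesis and $\psi\in C^r$, and your Leibniz-expansion argument, combined with \Cref{thm:LipschitzCriterion}, just fills in the obvious details; for (d) and (e) you construct the very same adapted coordinates (the paper invokes Ehresmann's fibration theorem), derive the same reduction $e_I\psi=\sum_{\mu\le\tilde n}(e_I\varphi_\mu)(\partial_\mu\psi)$, and use the same $|\partial_\mu\psi|\lesssim\sqrt\psi$ bound from \Cref{thm:LipschitzDerivative}. One cosmetic inaccuracy: the coefficients $c_I^K$ are polynomial combinations of iterated Lie derivatives of coordinate bump-extensions rather than single iterated Lie derivatives $e_{I_0}\alpha$, but since you close under sums and products via \Cref{thm:LipschitzCriterion} anyway, this does not affect the argument.
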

\begin{proof}
Statements~\ref{thm:Lemma01:A}-\ref{thm:Lemma01:C} follow immediately from the assumptions that~$\psi$ is of class $C^r$ and that $(e_0,e_1,\ldots,e_m)$ is an element of $\operatorname{Sys}(M;m,r)$. 
To prove parts~\ref{thm:Lemma01:D} and~\ref{thm:Lemma01:E}, fix an arbitrary point $x_0\in{M}$, and let $\tilde{x}_0:=\pi(x_0)\in\tilde{M}$. We exploit the assumption that $\pi\colon{M}\to\tilde{M}$ is a smooth proper surjective submersion. By Ehresmann's fibration theorem (see, e.g.,~\cite{AbrahamBook}), there exists an open neighborhood $\tilde{U}$ of $\tilde{x}_0$ in $\tilde{M}$ and a diffeomorphism $\phi\colon\tilde{U}\times\pi^{-1}(\tilde{x}_0)\to\pi^{-1}(\tilde{U})$ such that $\pi(\phi(\tilde{x},x))=\tilde{x}$ for every $\tilde{x}\in\tilde{U}$ and every $x\in\pi^{-1}(\tilde{x}_0)$. Let $(\hat{U},\hat{\varphi})$ be a chart for the smooth manifold $\pi^{-1}(\tilde{x}_0)$ at $x_0$. 
By shrinking $\tilde{U}$ sufficiently, we can ensure that $\tilde{U}$ is the domain of a chart $(\tilde{\varphi},\tilde{U})$ for $\tilde{M}$ and that $\varphi:=(\tilde{\varphi}\times\hat{\varphi})\circ\phi^{-1}|_U$ is a well-defined chart map for~$M$ at $x_0$ with domain $U:=\pi^{-1}(\tilde{U})$. 
Let $n$ and $\tilde{n}$ denote the dimensions of~$M$ and $\tilde{M}$, respectively. Let $\varphi_1,\ldots,\varphi_n\in{C^{\infty}(U)}$ be the coordinate functions of $\varphi$, and let $\partial_1,\ldots,\partial_n$ be the corresponding local basis for vector fields on~$U$. By definition of $\varphi$, every time-varying vector field~$f$ on~$M$ that is is tangent to the fibers of~$\pi$ has the local representation $f=\sum_{\mu=\tilde{n}+1}^{n}(f\varphi_{\mu})\partial_{\mu}$ on~$U$. 
Moreover, since~$\psi$ is assumed to be constant on the fibers of~$\pi$, we have $\partial_{\mu}\psi\equiv0$ for $\mu=\tilde{n}+1,\ldots,n$. 
Now suppose that $e_{i_2},\ldots,e_{i_{\ell}}$ are tangent to the fibers of~$\pi$. Then we have
\[
e_{I}\psi \ = \ \sum_{\mu=1}^{\tilde{n}}(e_I\varphi_{\mu})\,(\partial_{\mu}\psi)
\]
on $\mathbb{R}\times{U}$. 
Note that each of the $(e_I\varphi_{\mu})$ is locally uniformly bounded. Moreover, each of the $(\partial_{\mu}\psi)$ is bounded by the pointwise Lipschitz constant $\operatorname{Lip}_c(\psi)$ with respect to $c:=(U,\varphi)$. By \Cref{thm:LipschitzDerivative}, $\operatorname{Lip}_c(\psi)$ is locally bounded by a multiple of $\sqrt{\psi}$. Thus, $e_I\psi$ is locally uniformly bounded by a multiple of $\sqrt{\psi}$. The same argument shows that also the time derivative of $e_I\psi$ is locally uniformly bounded by a multiple of $\sqrt{\psi}$. Finally, to prove~\ref{thm:Lemma01:E}, suppose additionally that $\ell<r$ and that $f_0$ is tangent to the fibers of~$\pi$. Then we have
\[
\operatorname{L}(f_0,e_{I}\psi) \ \leq \ \sum_{\mu=1}^{\tilde{n}}\operatorname{L}(f_0,e_I\varphi_{\mu})\,|\partial_{\mu}\psi|
\]
on $\mathbb{R}\times{U}$. 
Since $\operatorname{L}(f_0,e_I\varphi_{\mu})$ is locally uniformly bounded, we conclude as in the proof of part~\ref{thm:Lemma01:D} that $\operatorname{L}(f_0,e_{I}\psi)$ is uniformly bounded by a multiple of $\sqrt{\psi}$, which completes the proof of the lemma.
\end{proof}
Let $I=(i_{\ell},\ldots,i_1)$ be a multi-index of length $0<\ell\leq{r}$ with $i_1,\ldots,i_{\ell}\in\{1,\ldots,m\}$. Recall that a \emph{partition} of the set $\{1,\ldots,\ell\}$ is a collection of non\-empty, disjoint subsets of $\{1,\ldots,\ell\}$ such that their union is all of $\{1,\ldots,\ell\}$. We denote by~$P_\ell$ the set of all partitions of~$\{1,\ldots,\ell\}$. For any (possibly empty) subset $S$ of $\{0,1,\ldots,\ell\}$, let $I\langle{S}\rangle$ be the (possibly empty) multi-index $(i_{s_{|S|}},\ldots,i_{s_1})$, where $|S|$ is the number of elements of $S$, and  $s_1,\ldots,s_{|S|}$ are the elements of $S$ in increasing order. We denote by $\#{I}$, the number of subindices $k\in\{1,\ldots,\ell\}$ such that for each $\nu=0,1,\ldots,r$, the $\nu$th derivative $h_{i_k}^{(\nu)}$ of $h_{i_k}$ has the property that $y^{\nu-1/2}h_{i_k}^{(\nu)}(y)$ remains bounded as $y\downarrow0$. Then, by the assumptions of \Cref{thm:outputConvergenceResult}, there are at least $|I|-\#I$ subindices $k\in\{1,\ldots,\ell\}$ such that $e_{i_k}$ is tangent to the fibers of~$\pi$.
\begin{lemma}\label{thm:Lemma02}
Let $I=(i_{\ell},\ldots,i_1)$ be a multi-index of length $0<\ell\leq{r}$ with $i_1,\ldots,i_{\ell}\in\{1,\ldots,m\}$, and let $i\in\{1,\ldots,m\}$. Then we have
\begin{equation}\label{eq:Lemma02}
e_I(h_i\circ\psi) \ = \ \sum_{p\in{P}_{\ell}}(h_i^{(|p|)}\circ\psi)\prod_{S\in{p}}e_{I\langle{S}\rangle}\psi
\end{equation}
on $\mathbb{R}\times{M_{>}}$. Moreover, for every compact subset~$K$ of~$M$, the following holds.
\begin{enumerate}[label=(\alph*)]
	\item\label{thm:Lemma02:A} $e_I(h_i\circ\psi)$ is uniformly bounded by a multiple of $\psi^{(1-\#I)/2}$ on $K\setminus{E}$. 
	\item\label{thm:Lemma02:B} $\partial_te_I(h_i\circ\psi)$ is uniformly bounded by a multiple of $\psi^{(1-\#I)/2}$ on $K\setminus{E}$.
	\item\label{thm:Lemma02:C} If $\ell<{r}$, and if $c=(U,\varphi)$ is a chart for~$M$ with $K\subseteq{U}$, then $\operatorname{Lip}_c(e_I(h_i\circ\psi))$ is uniformly bounded by a multiple of $\psi^{-\#I/2}$ on $K\setminus{E}$.
	\item\label{thm:Lemma02:D} If $\ell<{r}$, and if $f_0$ is tangent to the fibers of~$\pi$, then $\operatorname{L}(f_0,e_I(h_i\circ\psi))$ is uniformly bounded by a multiple of $\psi^{(1-\#I)/2}$ on $K\setminus{E}$.
\end{enumerate}
\end{lemma}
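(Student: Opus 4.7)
The plan is first to establish~\cref{eq:Lemma02} by induction on $\ell$, and then to estimate each summand on the right-hand side by classifying the blocks of the underlying partition according to whether their constituent subindices correspond to vector fields $e_{i_k}$ tangent to the fibres of $\pi$ (``bad'', forcing $h_{i_k}\equiv1$) or to functions $h_{i_k}$ satisfying the decay condition (``good''). The heart of the argument is a careful blockwise exponent count.

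For the induction, the base case $\ell=1$ is the ordinary chain rule $e_{i_1}(h_i\circ\psi)=(h_i'\circ\psi)\,e_{i_1}\psi$, applicable pointwise on $M_{>}$ because there $\psi>0$ and $h_i$ is $C^r$ on $(0,\infty)$ under either alternative of~\Cref{thm:outputConvergenceResult}. The inductive step applies $e_{i_\ell}$ to the formula for $I'=(i_{\ell-1},\ldots,i_1)$ and invokes the Leibniz and chain rules; every $p\in{P_{\ell}}$ arises in exactly one way from some $p'\in{P_{\ell-1}}$, either by adjoining $\{\ell\}$ as a new singleton (matching the term in which the composition $h_i^{(|p'|)}\circ\psi$ is differentiated) or by inserting $\ell$ into an existing block (matching a term in which some factor $e_{I\langle{S}\rangle}\psi$ is differentiated).

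For part~\ref{thm:Lemma02:A} I first dispose of the trivial subcase $h_i\equiv1$ on $(0,\infty)$, where $h_i^{(|p|)}\equiv0$ for every $|p|\geq1$ and both sides of~\cref{eq:Lemma02} vanish. Otherwise $h_i$ obeys $|h_i^{(\nu)}(\psi)|=O(\psi^{1/2-\nu})$ locally. For each partition $p$ I classify its blocks into three types: singletons $\{s_1\}$ with $s_1$ good (contributing $O(\sqrt{\psi})$ by~\Cref{thm:LipschitzDerivative}); non-singletons whose smallest element is good and whose tail is entirely bad (bounded by $O(\sqrt{\psi})$ via~\Cref{thm:Lemma01}~\ref{thm:Lemma01:D}); and the remaining non-singletons (bounded only by $O(1)$ via~\Cref{thm:Lemma01}~\ref{thm:Lemma01:A}). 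Blocks whose smallest element is bad annihilate the whole term, since $e_{i_{s_1}}\psi\equiv0$ whenever $e_{i_{s_1}}$ is tangent to the fibres of $\pi$ on which $\psi$ is constant. Writing $n_1$, $n_{2a}$, $n_{2b}$ for the counts of these three block types, each of the first two uses exactly one good subindex (at its $s_1$) and each of the third uses at least two, so $\#I\geq n_1+n_{2a}+2n_{2b}$. Multiplying the block bounds with $|h_i^{(|p|)}(\psi)|=O(\psi^{1/2-|p|})$ and using $|p|=n_1+n_{2a}+n_{2b}$ produces a term of order $\psi^{1/2-(n_1+n_{2a})/2-n_{2b}}\geq\psi^{(1-\#I)/2}$, as required.

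Parts~\ref{thm:Lemma02:B}, \ref{thm:Lemma02:C}, and~\ref{thm:Lemma02:D} will follow by differentiating~\cref{eq:Lemma02} in the sense of $\partial_t$, $\operatorname{Lip}_c$, and $\operatorname{L}(f_0,\cdot)$ respectively, using the Leibniz-type rules of~\Cref{thm:LipschitzCriterion,thm:setValuedLieDerivative}, then repeating the same block analysis with~\Cref{thm:Lemma01}~\ref{thm:Lemma01:C}--\ref{thm:Lemma01:E} replacing~\ref{thm:Lemma01:A}--\ref{thm:Lemma01:B} where appropriate. For~\ref{thm:Lemma02:C} the decisive ingredient is $\operatorname{Lip}_c(\psi)=O(\sqrt{\psi})$, which lowers the exponent of the $h_i^{(\nu)}\circ\psi$-type factor by $1/2$ whenever its Lipschitz constant is taken, explaining the shift from $(1-\#I)/2$ to $-\#I/2$. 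The main obstacle throughout is the combinatorial count in~\ref{thm:Lemma02:A}: compensating each mixed non-singleton block (bounded only by $O(1)$) by the extra good subindices that must lurk in its tail; once the inequality $\#I\geq n_1+n_{2a}+2n_{2b}$ is pinned down, the remaining parts reduce to parallel bookkeeping.
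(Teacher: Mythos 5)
Your argument is correct and follows essentially the same route as the paper's: the generalized Fa\'{a} di Bruno expansion over partitions, annihilation of every term whose block has a tangent-to-the-fibres start, and an exponent count matching the decay $|h_i^{(|p|)}\circ\psi|=O(\psi^{1/2-|p|})$ against the blockwise $O(\sqrt{\psi})$ bounds from \Cref{thm:Lemma01}. The only organizational difference is that the paper case-splits on $\#I\geq 2|p|$ versus $\#I<2|p|$, whereas you prove the uniform combinatorial inequality $\#I\geq n_1+n_{2a}+2n_{2b}$ directly, which amounts to the same estimate.
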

\begin{proof}
Since $h_i$ is of class $C^r$ on $(0,\infty)$, the composition $h_i\circ\psi$ is of class $C^r$ on $M_{>}$, which ensures that $e_I(h_i\circ\psi)$ exists as a time-varying function on $M_{>}$. Note that~\cref{eq:Lemma02} is just a generalized version of the well-known \emph{Fa\'{a} di Bruno formula} for higher-order derivatives of the composition of two maps, see, e.g., Proposition~1 in~\cite{Hardy2006}. For every partition $p\in{P_{\ell}}$, define the time-varying function
\begin{equation}\label{eq:Lemma02:proof:00}
e_I^p\psi \ := \ \prod_{S\in{p}}e_{I\langle{S}\rangle}\psi
\end{equation}
on~$M$. By definition of the time derivative, we obtain from~\cref{eq:Lemma02} that
\begin{equation}\label{eq:Lemma02:proof:01:A}
\partial_te_I(h_i\circ\psi) \ = \ \sum_{p\in{P_{\ell}}}(h_i^{(|p|)}\circ\psi)\,\partial_t(e_I^p\psi)
\end{equation}
on $\mathbb{R}\times{M}_{>}$ with
\begin{equation}\label{eq:Lemma02:proof:01:B}
\partial_t(e_I^p\psi) \ = \ \sum_{S\in{p}}(\partial_te_{I\langle{S}\rangle}\psi)\prod_{S\neq{S'}\in{p}}e_{I\langle{S'}\rangle}\psi.
\end{equation}
If $\ell<{r}$ and if $c=(U,\varphi)$ is a chart for~$M$, then we have
\begin{equation}\label{eq:Lemma02:proof:02:A}
\operatorname{Lip}_c(e_I(h_i\circ\psi))  \ \leq \ \sum_{p\in{P_{\ell}}}\Big(|h_i^{(|p|+1)}\circ\psi|\,\operatorname{Lip}_c(\psi)\,e_I^p\psi + |h_i^{(|p|)}\circ\psi|\,\operatorname{Lip}_c(e_I^p\psi) \Big)
\end{equation}
on $\mathbb{R}\times(M_{>}\cap{U})$ with
\begin{equation}\label{eq:Lemma02:proof:02:B}
\operatorname{Lip}_c(e_I^p\psi) \ \leq \ \sum_{S\in{p}}\operatorname{Lip}_c(e_{I\langle{S}\rangle}\psi)\prod_{S\neq{S'}\in{p}}|e_{I\langle{S'}\rangle}\psi|.
\end{equation}
Finally, if $\ell<{r}$, and if $f_0$ is tangent to the fibers of~$\pi$, then we have
\begin{equation}\label{eq:Lemma02:proof:03:A}
\operatorname{L}(f_0,e_I(h_i\circ\psi))  \ \leq \ \sum_{p\in{P_{\ell}}}|h_i^{(|p|)}\circ\psi|\,\operatorname{L}(f_0,e_I^p\psi)
\end{equation}
on $\mathbb{R}\times{M}_{>}$ with
\begin{equation}\label{eq:Lemma02:proof:03:B}
\operatorname{L}(f_0,e^p_I\psi) \ \leq \ \sum_{S\in{p}}\operatorname{L}(f_0,e_{I\langle{S}\rangle}\psi)\prod_{S\neq{S'}\in{p}}|e_{I\langle{S'}\rangle}\psi|.
\end{equation}
To prove the lemma, we fix an arbitrary partition $p\in{P_{\ell}}$ of $\{1,\ldots,\ell\}$, and then we verify that the terms in~\cref{eq:Lemma02,eq:Lemma02:proof:01:A,eq:Lemma02:proof:02:A,eq:Lemma02:proof:03:A} that correspond to this fixed~$p$ satisfy the asserted boundedness properties. Let~$K$ be a compact subset of~$M$. For every subset $S\in{p}$ of $\{1,\ldots,\ell\}$, we let $k(S,1),\ldots,k(S,|S|)$ denote the elements of $S$ in increasing order. Then, we can write $I\langle{S}\rangle=(i_{k(S,|S|)},\ldots,i_{k(S,1)})$. If there is an $S\in{p}$ such that $e_{i_{k(S,1)}}$ is tangent to the fibers of~$\pi$, then $e_{I\langle{S}\rangle}\psi\equiv0$, and therefore there is nothing left to prove with respect to boundedness. For the rest of the proof, we consider the non\-trivial case in which for every $S\in{p}$, the vector field $e_{i_{k(S,1)}}$ is not tangent to the fibers of~$\pi$. We make a case analysis. First, suppose that $\#I\geq2|p|$. Then, by the assumptions on $h_i$, the function $(h_i^{(|p|)}\circ\psi)$ is bounded by a multiple of $\psi^{(1-\#I)/2}$ on $K\setminus{E}$. The time-varying functions in~\cref{eq:Lemma02:proof:00,eq:Lemma02:proof:01:B,eq:Lemma02:proof:02:B,eq:Lemma02:proof:03:B} are locally uniformly bounded. It follows that the terms in~\cref{eq:Lemma02,eq:Lemma02:proof:01:A,eq:Lemma02:proof:03:A} that correspond to the partition $p$ are uniformly bounded by a multiple of $\psi^{(1-\#I)/2}$ on $K\setminus{E}$. Moreover, by \Cref{thm:LipschitzDerivative}, $\operatorname{Lip}_c(\psi)$ is locally bounded by a multiple of $\sqrt{\psi}$. Thus, the term in~\cref{eq:Lemma02:proof:02:A} that correspond to the partition $p$ is uniformly bounded by a multiple of $\psi^{-\#I/2}$ on $K\setminus{E}$. It remains to prove the same for the case $\#I<2|p|$. Recall that we also assume that for every $S\in{p}$, the vector field $e_{i_{k(S,1)}}$ is not tangent to the fibers of~$\pi$. It follows that there are at least $2|p|-\#I$ sets $S\in{p}$ such that $e_{i_{k(S,2)}},\ldots,e_{i_{k(S,|S|)}}$ are tangent to the fibers of~$\pi$. 
By \Cref{thm:Lemma01}, this implies that~\cref{eq:Lemma02:proof:00},~\cref{eq:Lemma02:proof:01:B}, and~\cref{eq:Lemma02:proof:03:B} are locally uniformly bounded by a multiple of~$\psi^{|p|-\#I/2}$ and that~\cref{eq:Lemma02:proof:02:B} is locally uniformly bounded by a multiple of~$\psi^{|p|-\#I/2-1/2}$. Since $(h_i^{(|p|)}\circ\psi)$ is bounded by a multiple of $\psi^{1/2-|p|}$ on $K\setminus{E}$, we obtain the same boundedness properties as in the case $\#I\geq2|p|$ for the terms in~\cref{eq:Lemma02,eq:Lemma02:proof:01:A,eq:Lemma02:proof:02:A,eq:Lemma02:proof:03:A} that correspond to the partition~$p$.
\end{proof}
Our next aim is to derive an explicit formula for the iterative Lie derivatives $f_I\alpha$, i.e., we want to compute
\begin{equation}\label{eq:proof:theProposition:01}
f_I\alpha \ = \ (h_{i_{\ell}}\circ\psi)e_{i_{\ell}}\big(\cdots(h_{i_2}\circ\psi)e_{i_2}\big((h_{i_1}\circ\psi)e_{i_1}\alpha\big)\cdots\big)
\end{equation}
on $\mathbb{R}\times{M_{>}}$ for arbitrary $I=(i_{\ell},\ldots,i_1)$. For instance, if $\ell=3$, then~\cref{eq:proof:theProposition:01} becomes
\begin{subequations}\label{eq:proof:theProposition:02}
\begin{align}
f_{(i_3,i_2,i_1)}\alpha \ = & \quad (e_{i_3}e_{i_2}e_{i_1}\alpha)\,(h_{i_1}\circ\psi)\,(h_{i_2}\circ\psi)\,(h_{i_3}\circ\psi) \label{eq:proof:theProposition:02:A} \allowdisplaybreaks \\
& + (e_{i_2}e_{i_1}\alpha)\,(h_{i_1}\circ\psi)\,(e_{i_3}(h_{i_2}\circ\psi))\,(h_{i_3}\circ\psi) \label{eq:proof:theProposition:02:B} \allowdisplaybreaks \\
& + (e_{i_2}e_{i_1}\alpha)\,(e_{i_3}(h_{i_1}\circ\psi))\,(h_{i_2}\circ\psi)\,(h_{i_3}\circ\psi) \label{eq:proof:theProposition:02:C} \allowdisplaybreaks \\
& + (e_{i_3}e_{i_1}\alpha)\,(e_{i_2}(h_{i_1}\circ\psi))\,(h_{i_2}\circ\psi)\,(h_{i_3}\circ\psi) \label{eq:proof:theProposition:02:D} \allowdisplaybreaks \\
& + (e_{i_1}\alpha)\,(e_{i_3}e_{i_2}(h_{i_1}\circ\psi))\,(h_{i_2}\circ\psi)\,(h_{i_3}\circ\psi) \label{eq:proof:theProposition:02:E} \allowdisplaybreaks \\
& + (e_{i_1}\alpha)\,(e_{i_2}(h_{i_1}\circ\psi))\,(e_{i_3}(h_{i_2}\circ\psi))\,(h_{i_3}\circ\psi) \label{eq:proof:theProposition:02:F}
\end{align}
\end{subequations}
on $\mathbb{R}\times{M_{>}}$. It is known (cf.~\cite{Grossman19892,Grossman19901,Grossman1992}) that these Lie derivatives can be expressed by means of rooted trees. We refer the reader to \cite{StanleyBook} for a precise definition of a \emph{rooted tree} and related terminology. An \emph{increasing tree on $\{0,1,\ldots,\ell\}$} is a rooted tree~$\tau$ of vertices $0,1,\ldots,\ell$ such that whenever a node $k'$ is a child of a node~$k$ in~$\tau$, we have $k'>k$. (Increasing trees are also known as \emph{recursive trees}.) As an example, all increasing trees on $\{0,1,\ldots,\ell\}$ are shown in \Cref{fig:rootedTrees} for $\ell=3$. We denote the set of increasing trees on $\{0,1,\ldots,\ell\}$ by $\mathcal{T}_{\ell}$. It is easy to check that $\mathcal{T}_{\ell}$ consists of $\ell!$ trees.
\begin{figure}
\centering
\subfloat[]{\label{fig:rootedTrees:a}\includegraphics{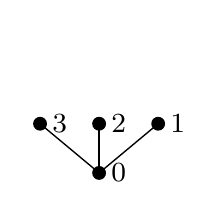}}
\subfloat[]{\label{fig:rootedTrees:b}\includegraphics{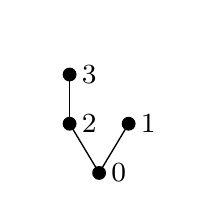}}
\subfloat[]{\label{fig:rootedTrees:c}\includegraphics{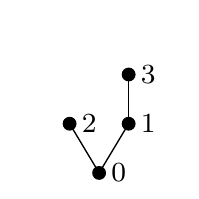}}
\subfloat[]{\label{fig:rootedTrees:d}\includegraphics{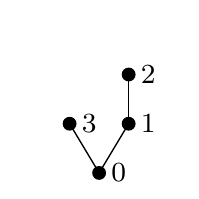}}
\subfloat[]{\label{fig:rootedTrees:e}\includegraphics{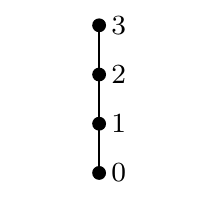}}
\subfloat[]{\label{fig:rootedTrees:f}\includegraphics{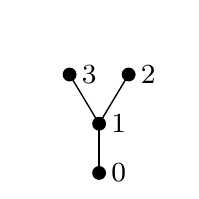}}
\caption{Depiction of all increasing trees on $\{0,1,2,3\}$. The trees are arranged in the same order as the corresponding constituents of $f_I\alpha$ in~\cref{eq:proof:theProposition:02}.}
\label{fig:rootedTrees}
\end{figure}
In order to write down a general version of~\cref{eq:proof:theProposition:02} for arbitrary $\ell$, we introduce the following notation. For every tree $\tau\in\mathcal{T}_{\ell}$ and every node $k\in\{0,1,\ldots,\ell\}$, let $\tau[k]$ be the possibly empty set of children of~$k$ in~$\tau$. If $\tau[k]$ is not empty, then $e_{I\langle\tau[k]\rangle}(h_{i_{k}}\circ\psi)$ is given by~\cref{eq:Lemma02} in \Cref{thm:Lemma02}. Otherwise, i.e., if $\tau[k]$ is empty, then we define $e_{I\langle\tau[k]\rangle}(h_{i_{k}}\circ\psi):=(h_{i_{k}}\circ\psi)$.
\begin{lemma}\label{thm:Lemma03}
Let $I=(i_{\ell},\ldots,i_1)$ be a multi-index of length $0<\ell\leq{r+1}$ with $i_1,\ldots,i_{\ell}\in\{1,\ldots,m\}$, and let $\alpha\in{C^{\infty}(M)}$. Then we have
\begin{equation}\label{eq:Lemma03}
f_I\alpha \ = \ \sum_{\tau\in\mathcal{T}_{\ell}}(e_{I\langle\tau[0]\rangle}\alpha)\prod_{k=1}^{\ell}e_{I\langle\tau[k]\rangle}(h_{i_{k}}\circ\psi)
\end{equation}
on $\mathbb{R}\times{M_{>}}$. Moreover, for every compact subset~$K$ of~$M$, the following holds.
\begin{enumerate}[label=(\alph*)]
	\item\label{thm:Lemma03:A} $f_I\alpha-(e_{I}\alpha)\prod_{k=1}^{\ell}(h_{i_{k}}\circ\psi)$ is uniformly bounded by a multiple of $\sqrt{\psi}$ on $K\setminus{E}$.
	\item\label{thm:Lemma03:B} If $\ell\leq{r}$, then $\partial_t(f_I\alpha)-(\partial_t(e_{I}\alpha))\prod_{k=1}^{\ell}(h_{i_{k}}\circ\psi)$ is uniformly bounded by a multiple of $\sqrt{\psi}$ on $K\setminus{E}$.
	\item\label{thm:Lemma03:C} If $\ell\leq{r}$, and if $c=(U,\varphi)$ is a chart for~$M$ with $K\subseteq{U}$, then $\operatorname{Lip}_c(f_I\alpha)$ is uniformly bounded on $K\setminus{E}$.
	\item\label{thm:Lemma03:D} If $\ell\leq{r}$, and if $f_0$ is tangent to the fibers of~$\pi$, then $\operatorname{L}(f_0,f_I\alpha)-\operatorname{L}(f_0,e_{I}\alpha)\prod_{k=1}^{\ell}(h_{i_{k}}\circ\psi)$ is uniformly bounded by a multiple of $\sqrt{\psi}$ on $K\setminus{E}$.
\end{enumerate}
\end{lemma}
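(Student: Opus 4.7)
My plan is to establish the formula \cref{eq:Lemma03} by induction on the multi-index length $\ell$, and then to derive the boundedness claims (a)--(d) by examining each tree contribution using the scaling estimates from \Cref{thm:Lemma02}.

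For the induction, the base case $\ell=1$ has the unique increasing tree in which node $1$ is a child of the root $0$, immediately giving $(e_{i_1}\alpha)(h_{i_1}\circ\psi) = f_{i_1}\alpha$. For the inductive step I would write $f_I = (h_{i_\ell}\circ\psi)\,e_{i_\ell}\,f_{I'}$ with $I'=(i_{\ell-1},\ldots,i_1)$, substitute the inductive formula for $f_{I'}\alpha$, and distribute $e_{i_\ell}$ across the product of $\ell$ factors by Leibniz. Each choice of which factor absorbs $e_{i_\ell}$ corresponds, under the standard bijection $\mathcal{T}_{\ell-1}\times\{0,1,\ldots,\ell-1\}\to\mathcal{T}_\ell$, to attaching $\ell$ as a new leaf to one of the vertices $k\in\{0,1,\ldots,\ell-1\}$ of a tree $\tau'\in\mathcal{T}_{\ell-1}$. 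The extra factor $(h_{i_\ell}\circ\psi)$ accounts for the new leaf, since $\tau[\ell]=\emptyset$ yields $e_{I\langle\tau[\ell]\rangle}(h_{i_\ell}\circ\psi) = h_{i_\ell}\circ\psi$ by convention. This reproduces exactly the right-hand side of \cref{eq:Lemma03}.

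For part (a), I would isolate the ``star tree'' $\tau_\star$ with $\tau_\star[0]=\{1,\ldots,\ell\}$ and $\tau_\star[k]=\emptyset$ for $k\geq 1$; this produces exactly the main term $(e_I\alpha)\prod_{k=1}^{\ell}(h_{i_k}\circ\psi)$. Every other tree has some $k\in\tau_\star[0]$ whose child set $\tau[k]$ is non-empty. If this $i_k$ satisfies condition~(1) of \Cref{thm:outputConvergenceResult} (so $h_{i_k}\equiv 1$ on $(0,\infty)$), then $e_{I\langle\tau[k]\rangle}(h_{i_k}\circ\psi) = 0$ and the tree term vanishes identically. Otherwise $i_k$ satisfies condition~(2), and \Cref{thm:Lemma02}(a) bounds the factor by a multiple of $\psi^{(1-\#I\langle\tau[k]\rangle)/2}$. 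A careful count using the disjoint decomposition $\{1,\ldots,\ell\}=\bigsqcup_{k=0}^{\ell-1}\tau[k]$, the fact that all non-leaf vertices must have case-(2) labels for a non-vanishing term, and the observation that every non-star tree necessarily has some internal vertex in $\tau[0]$, shows that the total power of $\psi$ in every surviving non-star contribution is at least $\tfrac12$.

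Parts (b), (c), and (d) follow by applying the same tree decomposition to $\partial_t$, $\operatorname{Lip}_c$, and $\operatorname{L}(f_0,\cdot)$, respectively, using parts (b), (c), (d) of \Cref{thm:Lemma02} in place of (a) together with the product-rule estimates of \Cref{thm:LipschitzCriterion} and \Cref{thm:setValuedLieDerivative}; time-independence of each $h_{i_k}\circ\psi$ ensures that $\partial_t$ can only hit the factor $e_{I\langle\tau[0]\rangle}\alpha$ or (in the non-star terms) a factor $e_{I\langle\tau[k]\rangle}(h_{i_k}\circ\psi)$, to which \Cref{thm:Lemma02}(b) applies. The main obstacle I foresee is the combinatorial bookkeeping in (a): one must rule out the apparently dangerous case in which $\tau[0]$ contains only case-(1) indices and the product naively scales as $\psi^0$. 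The resolution is that any non-star tree with $\tau[0]\subseteq L$ all case-(1) would need $\tau[0]=\{1,\ldots,\ell\}$ (the star itself), so in fact some vertex of $\tau[0]$ lies in $N$, and since that vertex is case-(1), the corresponding factor kills the whole term. A parallel observation for part (c) is that every application of $\operatorname{Lip}_c$ to a $\psi$-dependent factor costs one power of $\sqrt{\psi}$, exactly absorbed by the baseline $\sqrt{\psi}$ margin from (a).
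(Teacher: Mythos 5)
Your proposal is correct and follows essentially the same route as the paper: induction over increasing trees (Faà di Bruno type), isolation of the bushy/star tree to extract the leading term, and a tree-by-tree scaling analysis using \Cref{thm:Lemma02} together with the disjoint partition $\{1,\ldots,\ell\}=\bigsqcup_{k}\tau[k]$ to show $\#I\langle\tau[0]\rangle\geq1$ for every surviving non-star term, which supplies the needed $\sqrt{\psi}$ margin (and in part~(c) exactly pays for the one factor of $\psi^{-1/2}$ introduced by $\operatorname{Lip}_c$). The one place where the writing is muddled — the ``resolution'' sentence conflates the undefined sets $L$, $N$ with the leaf/case-(1) dichotomy — still encodes the correct underlying fact (a non-star tree must have a non-leaf child of the root; if that child were case-(1) the whole term vanishes, otherwise it contributes $\psi^{1/2}$), so there is no real gap, just notation that should be cleaned up.
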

\begin{proof}
As in~\cite{Grossman19901}, we prove~\cref{eq:Lemma03} by induction on the length $\ell$ of~$I$. Clearly,~\cref{eq:Lemma03} holds for $\ell=1$. In the inductive step, we assume that~\cref{eq:Lemma03} holds for $I=(i_{\ell_1},\ldots,i_1)$ and verify the formula for a multi-index $I':=(i_{\ell+1},i_{\ell},\ldots,i_1)$ with an additional index $i_{\ell+1}\in\{1,\ldots,m\}$. When we compute the Lie derivative of $f_{I}\alpha$ along $f_{i_{\ell+1}}$ on $\mathbb{R}\times{M_{>}}$, we can use the linearity and the product rule for derivations. Thus, the inductive step is complete if we can show that
\[
\sum_{\tau\in\mathcal{T}_{\ell}}\sum_{k=0}^{\ell}\,f_{i_{\ell+1}}(e_{I\langle\tau[k]\rangle}\alpha_{i_k})\prod_{k\neq{\kappa}=0}^{\ell}e_{I\langle\tau[\kappa]\rangle}\alpha_{i_\kappa} \ = \ \sum_{\tau'\in\mathcal{T}_{\ell+1}}\prod_{\kappa=0}^{\ell+1}e_{I'\langle\tau'[\kappa]\rangle}\alpha_{i_\kappa}
\]
on $\mathbb{R}\times{M_>}$, where $\alpha_0:=\alpha$ and $\alpha_{i}:=(h_{i}\circ\psi)$ for $i=1,\ldots,m$. Note that for each tree $\tau'\in\mathcal{T}_{\ell+1}$, there is a unique pair $(\tau,k)$ in $\mathcal{T}_{\ell}\times\{0,1,\ldots,\ell\}$ such that $\tau'$ originates by adding the node $\ell+1$ as a child of node~$k$ in~$\tau$. Moreover, both $\mathcal{T}_{\ell+1}$ and $\mathcal{T}_{\ell}\times\{0,1,\ldots,\ell\}$ have $(\ell+1)!$ elements. Thus, it suffices to show that
\[
f_{i_{\ell+1}}(e_{I\langle\tau[k]\rangle}\alpha_{i_k})\prod_{k\neq{\kappa}=0}^{\ell}e_{I\langle\tau[\kappa]\rangle}\alpha_{i_\kappa} \ = \ \prod_{\kappa=0}^{\ell+1}e_{I'\langle\tau'[\kappa]\rangle}\alpha_{i_\kappa}
\]
on $\mathbb{R}\times{M_>}$ for every tree $\tau'\in\mathcal{T}_{\ell+1}$ and its associated pair $(\tau,k)$ as explained above. It is clear that $e_{I\langle\tau[\kappa]\rangle}\alpha_{i_\kappa}=e_{I'\langle\tau'[\kappa]\rangle}\alpha_{i_\kappa}$ for $\kappa=0,1,\ldots,\ell$ with $\kappa\neq{k}$ because these nodes remain unchanged. Since $\tau'$ originates from~$\tau$ by adding the node $\ell+1$ as a child of node~$k$, we also have
\[
f_{i_{\ell+1}}(e_{I\langle\tau[k]\rangle}\alpha_{i_k}) \ = \ (e_{I\langle\tau'[\ell+1]\rangle}\alpha_{i_{\ell+1}})(e_{I'\langle\tau'[k]\rangle}\alpha_{i_k})
\]
on $\mathbb{R}\times{M_>}$, which completes the proof of~\cref{eq:Lemma03}.

Next, we prove the asserted boundedness properties. Let $b_{\ell}$ denote the \emph{bushy tree} with $\ell$ leaves, i.e., the tree in $\mathcal{T}_{\ell}$ whose root has $\ell$ children (this tree is depicted in \cref{fig:rootedTrees:a}for $\ell=3$). For $\tau\in\mathcal{T}_{\ell}$ with $\tau\neq{b_{\ell}}$, define the time-varying function
\begin{equation}\label{eq:Lemma03:proof:01:B}
f_I^{\tau}\psi \ := \ \prod_{k=1}^{\ell}e_{I\langle\tau[k]\rangle}(h_{i_k}\circ\psi)
\end{equation}
on $M_{>}$. Then, we can write
\begin{equation}\label{eq:Lemma03:proof:01:A}
f_I\alpha - (e_{I}\alpha)\prod_{k=1}^{\ell}(h_{i_{k}}\circ\psi) \ = \ \sum_{b_{\ell}\neq\tau\in\mathcal{T}_{\ell}}(e_{I\langle\tau[0]\rangle}\alpha)\,f_I^{\tau}\psi
\end{equation}
on $\mathbb{R}\times{M_{>}}$. If $\ell\leq{r}$, we obtain by definition of the time derivative that
\begin{equation}\label{eq:Lemma03:proof:02:A}
\partial_tf_I\alpha - (\partial_te_{I}\alpha)\prod_{k=1}^{\ell}(h_{i_{k}}\circ\psi) \ = \ \sum_{\!\!b_{\ell}\neq\tau\in\mathcal{T}_{\ell}\!\!}\big((\partial_te_{I\langle\tau[0]\rangle}\alpha)\,(f_I^{\tau}\psi) + (e_{I\langle\tau[0]\rangle}\alpha)\,(\partial_tf_I^{\tau}\psi) \big)
\end{equation}
on $\mathbb{R}\times{M_{>}}$ with
\begin{equation}\label{eq:Lemma03:proof:02:B}
\partial_tf_I^{\tau}\psi \ = \ \sum_{k=1}^{\ell}(\partial_te_{I\langle\tau[k]\rangle}(h_{i_k}\circ\psi))\prod_{k\neq\kappa=1}^{\ell}e_{I\langle\tau[\kappa]\rangle}(h_{i_\kappa}\circ\psi).
\end{equation}
If $\ell\leq{r}$ and if $c=(U,\varphi)$ is a chart for~$M$, then we have
\begin{subequations}\label{eq:Lemma03:proof:03:A}
\begin{align}
\operatorname{Lip}_{c}(f_I\alpha) & \ \leq \ \operatorname{Lip}_{c}(e_{I}\alpha) \prod_{k=1}^{\ell}(h_{i_k}\circ\psi) + (e_{I}\alpha)\sum_{k=1}^{\ell}\operatorname{Lip}_{c}(h_{i_k}\circ\psi)\prod_{k\neq\kappa=1}^{\ell}(h_{i_\kappa}\circ\psi)\label{eq:Lemma03:proof:03:A:A} \allowdisplaybreaks \\
& \qquad + \sum_{b_{\ell}\neq\tau\in\mathcal{T}_{\ell}}\big(\operatorname{Lip}_{c}(e_{I\langle\tau[0]\rangle}\alpha)\,|f_I^{\tau}\psi| + |e_{I\langle\tau[0]\rangle}\alpha|\,\operatorname{Lip}_{c}(f_I^{\tau}\psi) \big)\label{eq:Lemma03:proof:03:A:B}
\end{align}
\end{subequations}
on $\mathbb{R}\times(M_{>}\cap{U})$ with
\begin{equation}\label{eq:Lemma03:proof:03:B}
\operatorname{Lip}_{c}(f_I^{\tau}\psi) \ \leq \ \sum_{k=1}^{\ell}\operatorname{Lip}_{c}(e_{I\langle\tau[k]\rangle}(h_{i_k}\circ\psi))\prod_{k\neq\kappa=1}^{\ell}|e_{I\langle\tau[\kappa]\rangle}(h_{i_\kappa}\circ\psi)|.
\end{equation}
Finally, if $\ell\leq{r}$, and if $f_0$ is tangent to the fibers of~$\pi$, then the Lie derivative of $h_{i_k}\circ\psi$ along $f_0$ vanishes and therefore we have
\begin{subequations}\label{eq:Lemma03:proof:04:A}
\begin{align}
\Big|\operatorname{L}(f_0,f_I\alpha)-\operatorname{L}(f_0,e_{I}\alpha)\prod_{k=1}^{\ell}(h_{i_{k}}\circ\psi)\Big| & \label{eq:Lemma03:proof:04:A:A} \allowdisplaybreaks \\
& \!\!\!\!\!\!\!\!\!\!\!\!\!\!\!\!\!\!\!\!\!\!\!\!\!\!\!\!\!\!\!\!\!\!\!\!\!\!\!\!\!\!\!\!\!\!\!\!\!\!\!\!\!\!\!\!\!\!\!\! \leq \ \sum_{b_{\ell}\neq\tau\in\mathcal{T}_{\ell}}\big(\operatorname{L}(f_0,e_{I\langle\tau[0]\rangle}\alpha)\,|f_I^{\tau}\psi| + |e_{I\langle\tau[0]\rangle}\alpha|\,\operatorname{L}(f_0,f_I^{\tau}\psi) \big)\label{eq:Lemma03:proof:04:A:B}
\end{align}
\end{subequations}
on $\mathbb{R}\times{M_{>}}$ with 
\begin{equation}\label{eq:Lemma03:proof:04:B}
\operatorname{L}(f_0,f_I^{\tau}\psi) \ \leq \ \sum_{k=1}^{\ell}\operatorname{L}(f_0,e_{I\langle\tau[k]\rangle}(h_{i_k}\circ\psi))\prod_{k\neq\kappa=1}^{\ell}|e_{I\langle\tau[\kappa]\rangle}(h_{i_\kappa}\circ\psi)|.
\end{equation}
Fix a tree $\tau\in\mathcal{T}_{\ell}$ which is not the bushy tree $b_{\ell}$. For every $k=0,1,\ldots,\ell$, we denote by $\#I\langle\tau[k]\rangle$ the number of children $\kappa\in\tau[k]$ for which $y^{\nu-1/2}h_{i_{\kappa}}^{(\nu)}(y)$ remains bounded as $y\downarrow0$ for $\nu=0,1,\ldots,r$. If there exists $k\in\{1,\ldots,\ell\}$ such that $\tau[k]$ is not empty and $h_{i_k}\equiv1$ on $(0,\infty)$, then $e_{I\langle\tau[k]\rangle}(h_{i_k}\circ\psi)\equiv0$ and therefore also $f^{\tau}_{I}\alpha\equiv0$ on $\mathbb{R}\times{M_{>}}$. In this case, there is nothing left to prove with respect to boundedness. For the rest of the proof we consider the opposite case in which, for every $k\in\{1,\ldots,\ell\}$, we have that $\tau[k]$ is empty or that $y^{\nu-1/2}h_{i_k}^{(\nu)}(y)$ remains bounded as $y\downarrow0$ for $\nu=0,1,\ldots,r$. Since~$\tau$ is assumed to be not a bushy tree, it follows for the root that $\#I\langle\tau[0]\rangle\geq1$. 
For  each of the remaining nodes $k=1,\ldots,\ell$ of $\tau$, there are two possibilities: either $\tau[k]$ is empty and $h_{i_k}\equiv1$ on $(0,\infty)$, which results in $e_{I\langle\tau[k]\rangle}(h_{i_k}\circ\psi)\equiv1$ on $\mathbb{R}\times{M_>}$, or otherwise, by \Cref{thm:Lemma02}~\ref{thm:Lemma02:A}, $e_{I\langle\tau[k]\rangle}(h_{i_k}\circ\psi)$ is uniformly bounded by a multiple of $\psi^{(1-\#I\langle\tau[k]\rangle)/2}$ on $K\setminus{E}$. Since $\#I=\#I\langle\tau[0]\rangle+\cdots+\#I\langle\tau[\ell]\rangle$, this implies that \cref{eq:Lemma03:proof:01:B} is uniformly bounded by a multiple of $\psi^{\#I\langle\tau[0]\rangle/2}$ on $K\setminus{E}$. 
Because of \Cref{thm:Lemma02}~\ref{thm:Lemma02:B} and~\ref{thm:Lemma02:D}, the same argument implies that also \cref{eq:Lemma03:proof:02:B} and \cref{eq:Lemma03:proof:04:B} are uniformly bounded by a multiple of $\psi^{\#I\langle\tau[0]\rangle/2}$ on $K\setminus{E}$. Since $\#I\langle\tau[0]\rangle\geq1$, this proves the asserted boundedness properties of~\cref{eq:Lemma03:proof:01:A}, \cref{eq:Lemma03:proof:02:A}, and \cref{eq:Lemma03:proof:04:A}, i.e., statements~\ref{thm:Lemma03:A}, \ref{thm:Lemma03:B}, and~\ref{thm:Lemma03:D} of \Cref{thm:Lemma03}. For the pointwise Lipschitz constant, we obtain from \Cref{thm:Lemma02}~\ref{thm:Lemma02:C} and a similar argument as above that \cref{eq:Lemma03:proof:03:B} is uniformly bounded by a multiple of $\psi^{\#I\langle\tau[0]\rangle/2-1}$ on $K\setminus{E}$. Thus, the same is true for the contribution in~\cref{eq:Lemma03:proof:03:A:B}. Since $\#I\langle\tau[0]\rangle\geq1$, this implies that \cref{eq:Lemma03:proof:03:B} is uniformly bounded on $K\setminus{E}$. Note that the same is true for the contribution on the right-hand side of~\cref{eq:Lemma03:proof:03:A:A} because of \Cref{thm:LipschitzDerivative,thm:Lemma00}. This proves the remaining part~\ref{thm:Lemma03:C} of \Cref{thm:Lemma03}.
\end{proof}
So far we have restricted our considerations to the submanifold $M_{>}$ of~$M$ where existence of the iterated Lie derivatives $f_I\alpha$ is ensured by the fact that the functions $h_i\circ\psi$ are of class~$C^r$. In the next step, we show that $f_I\alpha$ and its derivatives also exist on the zero set~$E$ of~$\psi$.
\begin{lemma}\label{thm:Lemma04}
Let $I=(i_{\ell},\ldots,i_1)$ be a multi-index of length $0<\ell\leq{r+1}$ with $i_1,\ldots,i_{\ell}\in\{1,\ldots,m\}$, and let $\alpha\in{C^{\infty}(M)}$. Then:
\begin{enumerate}[label=(\alph*)]
	\item\label{thm:Lemma04:A} $f_{I}\alpha=(e_I\alpha)\prod_{k=1}^{\ell}h_{i_k}(0)$ holds on $\mathbb{R}\times{E}$.
	\item\label{thm:Lemma04:B} If $\ell\leq{r}$, then $f_I\alpha$ is a locally uniformly Lipschitz continuous time-varying function on~$M$.
	\item\label{thm:Lemma04:C} If $\ell\leq{r}$, then $\partial_tf_{I}\alpha=(\partial_te_I\alpha)\prod_{k=1}^{\ell}h_{i_k}(0)$ holds on $\mathbb{R}\times{E}$.
	\item\label{thm:Lemma04:D} If $\ell\leq{r}$, and if $f_0$ is tangent to the fibers of~$\pi$, then $\operatorname{L}(f_0,f_I\alpha)=\operatorname{L}(f_0,e_{I}\alpha)\prod_{k=1}^{\ell}h_{i_{k}}(0)$ holds on $\mathbb{R}\times{E}$.
\end{enumerate}
\end{lemma}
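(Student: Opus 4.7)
The plan is induction on the length $\ell$ of the multi-index $I$, using the explicit formulas from \Cref{thm:Lemma03} on $M_{>}$ and extending them across $E$ by hand. For the base case $\ell=1$, the vector field $f_{i_1} = (h_{i_1}\circ\psi)\,e_{i_1}$ is locally Lipschitz by \Cref{thm:Lemma00} together with the regularity of $e_{i_1}$ built into $\operatorname{Sys}(M;m,r)$, so it enjoys existence and uniqueness of integral curves. Since $\alpha$ is smooth, \Cref{thm:tangentCharacterization} yields $f_{i_1}\alpha = (h_{i_1}\circ\psi)(e_{i_1}\alpha)$ pointwise on $\mathbb{R}\times M$. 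Parts (a), (c), (d) for $\ell=1$ then follow by evaluating at $\psi=0$ (using continuity of $h_{i_1}$ assumed in \Cref{sec:convergence:03}), and (b) follows from the product rule in \Cref{thm:LipschitzCriterion}.

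For the inductive step, I write $I=(i_\ell, I')$ with $I'=(i_{\ell-1},\ldots,i_1)$ and assume the lemma at length $\ell-1$. Fix $(t,x)\in\mathbb{R}\times E$; the argument for part (a) splits according to the dichotomy of \Cref{thm:outputConvergenceResult}. If $h_{i_\ell}$ satisfies condition~(2), the boundedness of $y^{-1/2}h_{i_\ell}(y)$ as $y\downarrow 0$ forces $h_{i_\ell}(0)=0$, so $f_{i_\ell}$ vanishes identically on $E$; by uniqueness of integral curves the flow through $x\in E$ is the constant curve and $f_I\alpha(t,x)=0$, in agreement with $\prod_k h_{i_k}(0)=0$. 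If $h_{i_\ell}$ satisfies condition~(1), then $e_{i_\ell}$ is tangent to the fibers of $\pi$ and $h_{i_\ell}(0)=1$ by continuity; because $\psi$ is constant on fibers, the flow of $f_{i_\ell}$ through $x$ stays inside $\pi^{-1}(\pi(x))\subseteq E$. Along this flow the inductive formula gives $f_{I'}\alpha = (e_{I'}\alpha)\prod_{k<\ell}h_{i_k}(0)$, so pulling the constant factor out yields $f_I\alpha(t,x) = \prod_{k<\ell}h_{i_k}(0)\,(f_{i_\ell}(e_{I'}\alpha))(t,x)$; since $f_{i_\ell}=h_{i_\ell}(0)e_{i_\ell}$ on $E$ and $e_{I'}\alpha$ is locally uniformly Lipschitz, \Cref{thm:tangentCharacterization} identifies this last factor with $h_{i_\ell}(0)(e_I\alpha)(t,x)$, completing part (a).

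For part (b), with $\ell\leq r$, I verify the criterion of \Cref{thm:LipschitzCriterion} by showing $\operatorname{Lip}_c(f_I\alpha)$ is locally uniformly bounded. Decompose $f_I\alpha = \tilde\beta + R$, where $\tilde\beta := (e_I\alpha)\prod_k(h_{i_k}\circ\psi)$ is a product of locally uniformly Lipschitz factors and hence locally uniformly Lipschitz itself. By \Cref{thm:Lemma03}(a), $|R|\leq C\sqrt{\psi}$ on $K\setminus E$, and part (a) gives $R(t,x)=0$ on $\mathbb{R}\times E$. Since $\psi$ is nonnegative and $C^2$ with $\psi(x)=0$, Taylor's theorem (as in the proof of \Cref{thm:LipschitzDerivative}) yields $\sqrt{\psi(x')}\leq C'\|\varphi(x')-\varphi(x)\|$ for $x'$ near $x$, so the difference quotient $|R(t,x')|/\|\varphi(x')-\varphi(x)\|$ is uniformly bounded in $t$; together with the bound on $\operatorname{Lip}_c(f_I\alpha)$ on $K\setminus E$ from \Cref{thm:Lemma03}(c) and the triangle inequality, this shows that $\operatorname{Lip}_c(f_I\alpha)$ is locally uniformly bounded on $M$.

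Parts (c) and (d) fall out of part (a) by direct computation. For (c), since $f_I\alpha(\cdot,x) = \prod_k h_{i_k}(0)\,e_I\alpha(\cdot,x)$ as functions of $t$ for each fixed $x\in E$, the time derivative inherits the formula. For (d), tangency of $f_0$ to the fibers of $\pi$ together with $\psi$ being constant on fibers implies that the flow of $f_0$ through $x\in E$ remains in $E$, and applying the limsup in the definition of $\operatorname{L}(f_0,\cdot)$ to the formula from (a) gives the identity $\operatorname{L}(f_0,f_I\alpha)=(\prod_k h_{i_k}(0))\,\operatorname{L}(f_0,e_I\alpha)$, with the product being nonnegative (each $h_{i_k}(0)\in\{0,1\}$) so that no absolute value is required. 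The main technical hurdle is the case analysis in the inductive step of part (a)---in particular, establishing that the flow of $f_{i_\ell}$ through a point of $E$ is either constant (condition (2)) or fiber-preserving (condition (1)), and reducing the Lie derivative of the nonsmooth $f_{I'}\alpha$ to that of the smooth $e_{I'}\alpha$ via \Cref{thm:tangentCharacterization}; the quadratic vanishing of $\psi$ at its zeros, used to control $\operatorname{Lip}_c(R)$ in part (b), is the other essential geometric input.
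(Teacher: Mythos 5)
Your proof is correct and follows essentially the same strategy as the paper's: induction on $\ell$, with (a) established first (by a case split on which of the two hypotheses of Theorem~\ref{thm:outputConvergenceResult} the index $i_\ell$ falls under — for case (1) the flow of $f_{i_\ell}=e_{i_\ell}$ stays in the fiber through $x\in E\subseteq M$ and hence in $E$, while for case (2) $h_{i_\ell}(0)=0$ forces $f_{i_\ell}$ and with it the Lie derivative to vanish on $E$), and then (b) by combining the decomposition $f_I\alpha = (e_I\alpha)\prod_k(h_{i_k}\circ\psi) + R$ with the $\sqrt{\psi}$-bound on $R$ from Lemma~\ref{thm:Lemma03}~\ref{thm:Lemma03:A} and the Lipschitz continuity of $\sqrt{\psi}$ from Lemma~\ref{thm:LipschitzDerivative}. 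The only genuine difference is cosmetic: the paper's induction hypothesis is statement (a) alone at length $\ell$ (from which (b)--(d) at $\ell$ and (a) at $\ell+1$ are derived), whereas you carry the full quadruple at $\ell-1$; these are equivalent organizations. Your explicit observation that $h_{i_k}(0)\in\{0,1\}$, so the limsup identity in (d) holds without absolute values, is a detail the paper leaves implicit, and the invocation of Lemma~\ref{thm:tangentCharacterization} in your case (1) is harmless but superfluous, since in that case $f_{i_\ell}=e_{i_\ell}$ on all of $M$ rather than only on $E$.
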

\begin{proof}
We prove the lemma by induction on the length $\ell$ of~$I$. Clearly, statement~\ref{thm:Lemma04:A} is true for $\ell=1$. In the inductive step, we assume that statement~\ref{thm:Lemma04:A} is true for some $\ell\in\{1,\ldots,r\}$ and then we prove statements~\ref{thm:Lemma04:B}, \ref{thm:Lemma04:C}, \ref{thm:Lemma04:D} for $|I|=\ell$ and statement~\ref{thm:Lemma04:A} for $|I|=\ell+1$. To prove that $f_I\alpha$ is locally uniformly Lipschitz continuous, we use \Cref{thm:LipschitzCriterion}. For this reason, fix an arbitrary chart $c=(U,\varphi)$ for~$M$ and a compact subset~$K$ of~$U$. We already know from \Cref{thm:Lemma03}~\ref{thm:Lemma03:C} that $\operatorname{Lip}_c(f_I\alpha)$ is uniformly bounded on $K\setminus{E}$. Therefore, it remains to prove that $\operatorname{Lip}_c(f_I\alpha)$ is also uniformly bounded on $K\cap{E}$. By the induction hypothesis, we can define the time-varying function
\begin{equation}\label{eq:Lemma04:proof:01}
\Delta{f_I\alpha} \ := \ f_I\alpha - (e_{I}\alpha)\prod_{k=1}^{\ell}(h_{i_{k}}\circ\psi)
\end{equation}
on~$M$. By \Cref{thm:Lemma00}, the product $(e_{I}\alpha)\prod_{k=1}^{\ell}(h_{i_{k}}\circ\psi)$ on the right-hand side of~\cref{eq:Lemma04:proof:01} is a locally uniformly Lipschitz continuous time-varying function on~$M$. Thus, statement~\ref{thm:Lemma04:B} follows if we can prove that $\operatorname{Lip}_c(\Delta{f_I\alpha})$ is uniformly bounded on $K\cap{E}$. We conclude from the inductive hypothesis and \Cref{thm:Lemma03}~\ref{thm:Lemma03:A} that $\Delta{f_I\alpha}$ is locally uniformly bounded by a multiple of $\sqrt{\psi}$. Thus, there exist a compact neighborhood $K'\subseteq{U}$ of~$K$ and a constant $c>0$ such that
\[
\big|\Delta{f_I\alpha}(t,x')-\Delta{f_I\alpha}(t,x)\big| \ \leq \ c\,\big|\sqrt{\psi(x')}-\sqrt{\psi(x)}\big|
\]
for every $t\in\mathbb{R}$, every $x\in{K\cap{E}}$ and every $x'\in{K'}$. Since $\sqrt{\psi}$ is locally Lipschitz continuous (cf. \Cref{thm:LipschitzDerivative}), we conclude from \Cref{thm:LipschitzCriterion} that $\operatorname{Lip}_c(\Delta{f_I\alpha})$ is uniformly bounded on $K\cap{E}$, which completes the proof of statement~\ref{thm:Lemma04:B} for $|I|=\ell$. Statement~\ref{thm:Lemma04:C} follows directly from the induction hypothesis and the definition of the time derivative. To prove statement~\ref{thm:Lemma04:D}, assume that $f_0=e_0$ is tangent to the fibers of~$\pi$, and fix arbitrary $t\in\mathbb{R}$ and $x\in{M}$. Let $\gamma$ be the maximal integral curve of $f_0$ passing through~$x$ at $t$. Since $f_0$ is tangent to the fibers of $\pi$ and since $\psi$ is constant on the fibers of $\pi$ with $\psi(x)=0$, we have $\psi\circ\gamma\equiv0$, i.e., the curve $\gamma$ runs in~$E$. Therefore, we have
\[
\frac{1}{s}\big((f_I\alpha)(t,\gamma(t+s))-(f_I\alpha)(t,x)\big) \ = \ \frac{1}{s}\big((e_I\alpha)(t,\gamma(t+s))-(e_I\alpha)(t,x)\big)\prod_{k=1}^{\ell}h_{i_k}(0).
\]
for $s\neq0$ sufficiently close to $0$. In the limit $s\to0$, we obtain statement~\ref{thm:Lemma04:D} for $|I|=\ell$. To finish the inductive step, we have to show that for every $i_{\ell+1}=1,\ldots,m$, the Lie derivative of $f_I\alpha$ along $f_{i_{\ell+1}}$ is given as in~\ref{thm:Lemma04:A} for the multi-index $I'=(i_{\ell+1},\ldots,i_1)$. If $e_{i_{\ell+1}}$ is tangent to the fibers of~$\pi$, then a similar argument as in the proof of statement~\ref{thm:Lemma04:D} yields the asserted formula for $f_{I'}\alpha$ on $\mathbb{R}\times{E}$. Otherwise, we know that $f_{i_{\ell+1}}$ is locally uniformly bounded by a multiple of $\sqrt{\psi}$, which implies that $f_{i_{\ell+1}}$ vanishes on $\mathbb{R}\times{E}$. Then~\ref{thm:Lemma04:A} is also true, since both $f_{I'}\alpha$ and $(e_{I'}\alpha)\prod_{k=1}^{\ell+1}h_{i_{k}}(0)$ are identically equal to zero on $\mathbb{R}\times{E}$.
\end{proof}
The next lemma is an important step in the identification of \Cref{thm:outputConvergenceResult} as a particular case of \Cref{thm:convergenceResult}.
\begin{lemma}\label{thm:Lemma05}
$(f_0,f_1,\ldots,f_m)\in\operatorname{Sys}(M;m,r)$.
\end{lemma}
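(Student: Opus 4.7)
The plan is to verify the three conditions \ref{item:def:Sys:i}--\ref{item:def:Sys:iii} of \Cref{def:Sys} in turn. Condition~\ref{item:def:Sys:i} is immediate, since $f_0=e_0$ and $(e_0,e_1,\ldots,e_m)\in\operatorname{Sys}(M;m,r)$ by hypothesis.

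For condition~\ref{item:def:Sys:ii}, I would fix $\alpha\in C^{\infty}(M)$ and a multi-index $I$ of length $0<|I|\leq r$. Local uniform Lipschitz continuity of $f_I\alpha$ on $M$ is already recorded in \Cref{thm:Lemma04}~\ref{thm:Lemma04:B}. Local uniform boundedness would be assembled in two pieces: on $\mathbb{R}\times M_{>}$, \Cref{thm:Lemma03}~\ref{thm:Lemma03:A} writes $f_I\alpha$ as $(e_I\alpha)\prod_k(h_{i_k}\circ\psi)$ plus a remainder locally uniformly bounded by a multiple of $\sqrt{\psi}$; both terms are locally uniformly bounded, using condition~\ref{item:def:Sys:ii} of $\operatorname{Sys}$ for the $e_j$'s together with continuity of $h_{i_k}\circ\psi$. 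On $\mathbb{R}\times E$, \Cref{thm:Lemma04}~\ref{thm:Lemma04:A} identifies $f_I\alpha$ with $(e_I\alpha)\prod_k h_{i_k}(0)$, which is locally uniformly bounded for the same reason. The existence, boundedness, and continuity of $\partial_t f_I\alpha$ would be handled analogously, replacing \Cref{thm:Lemma03}~\ref{thm:Lemma03:A} and \Cref{thm:Lemma04}~\ref{thm:Lemma04:A} by \Cref{thm:Lemma03}~\ref{thm:Lemma03:B} and \Cref{thm:Lemma04}~\ref{thm:Lemma04:C}; to upgrade continuity of $\partial_t f_I\alpha$ from $\mathbb{R}\times M_{>}$ and $\mathbb{R}\times E$ to all of $\mathbb{R}\times M$, I would observe that the $O(\sqrt{\psi})$ remainder vanishes as $\psi\to 0$ while the product $(\partial_t e_I\alpha)\prod_k(h_{i_k}\circ\psi)$ is continuous throughout $\mathbb{R}\times M$.

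For condition~\ref{item:def:Sys:iii}, I would fix a multi-index $I$ of length $r$ and an index $i\in\{1,\ldots,m\}$. Since $h_i\circ\psi$ is locally Lipschitz by \Cref{thm:Lemma00} and $e_i$ lies in $\operatorname{Sys}(M;m,r)$, the vector field $f_i=(h_i\circ\psi)e_i$ is itself a locally Lipschitz continuous Carath\'eodory vector field and thus has a well-defined flow, so the flow-based limit defining $f_if_I\alpha$ is meaningful. On $\mathbb{R}\times M_{>}$, applying $f_i$ as a derivation term-by-term to the expansion of $f_I\alpha$ in \Cref{thm:Lemma03} reproduces the corresponding expansion of $f_{iI}\alpha$ of length $r+1$; every factor is locally uniformly bounded and Carath\'eodory in $(t,x)$, invoking condition~\ref{item:def:Sys:iii} of $\operatorname{Sys}$ for the $e_j$'s on the single term carrying $e_{iJ}\alpha$ with $|J|=r$. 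On $\mathbb{R}\times E$ I would split into cases according to the hypothesis on $(h_i,e_i)$. In case~(2), continuity of $h_i$ together with the bound on $y^{-1/2}h_i(y)$ as $y\downarrow 0$ forces $h_i(0)=0$, so $f_i\equiv 0$ on $E$, the $f_i$-flow through any $x\in E$ is stationary, and $(f_if_I\alpha)(t,x)=0$, matching \Cref{thm:Lemma04}~\ref{thm:Lemma04:A} applied to $iI$. In case~(1), $h_i\equiv 1$ on $[0,\infty)$ and $e_i$ is tangent to the fibers of $\pi$, so $f_i=e_i$ and integral curves of $f_i$ through a point of $E$ remain in $E$; \Cref{thm:Lemma04}~\ref{thm:Lemma04:A} for $I$ then collapses the difference quotient defining $f_if_I\alpha$ at such a point to the one defining $(e_ie_I\alpha)\prod_k h_{i_k}(0)$, whose existence is condition~\ref{item:def:Sys:iii} of $\operatorname{Sys}$ for the $e_j$'s. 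Local uniform boundedness on $M$ and the Carath\'eodory property across $E$ would then follow as in condition~\ref{item:def:Sys:ii}, using the $O(\sqrt{\psi})$ remainder of \Cref{thm:Lemma03}~\ref{thm:Lemma03:A} to control the behaviour of $f_{iI}\alpha$ near $E$.

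The hard part will be condition~\ref{item:def:Sys:iii} at points of $E$: individual summands in the expansion from \Cref{thm:Lemma03} can blow up as $\psi\to 0$ (through high derivatives of $h_i$ multiplied by negative powers of $\psi$), so one cannot simply pass to the limit in the formula. Instead I have to return to the flow-based definition of the Lie derivative and exploit the two alternative structural hypotheses on $(h_i,e_i)$ separately to verify that the defining limit exists and to reconcile its value with the one prescribed by \Cref{thm:Lemma04}~\ref{thm:Lemma04:A}.
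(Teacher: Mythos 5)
Your overall approach is the same as the paper's: verify conditions~\ref{item:def:Sys:i}--\ref{item:def:Sys:iii} of \Cref{def:Sys} by combining \Cref{thm:Lemma03} (the explicit expansion on $M_{>}$ together with its $O(\sqrt{\psi})$ bounds) and \Cref{thm:Lemma04} (the extension of these statements across the zero set $E$), and that is exactly how the paper's proof proceeds for conditions~\ref{item:def:Sys:ii} and~\ref{item:def:Sys:iii}.

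Where you deviate is in condition~\ref{item:def:Sys:iii} at points of $E$, which you flag as ``the hard part'' and propose to handle by returning to the flow-based definition of the Lie derivative and splitting into the two structural cases on $(h_i,e_i)$. That work has already been done: \Cref{thm:Lemma04}~\ref{thm:Lemma04:A} is stated for multi-indices of length $0<\ell\leq r+1$, so it already gives existence and the value of $f_{iI}\alpha$ on $\mathbb{R}\times E$ for $|I|=r$, and its inductive proof contains precisely the two-case argument you sketch (when $e_{i}$ is tangent to the fibers of $\pi$ the integral curve stays in $E$; otherwise $f_i$ is bounded by a multiple of $\sqrt{\psi}$, hence vanishes on $E$). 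The paper's proof of \Cref{thm:Lemma05} therefore simply cites \Cref{thm:Lemma03,thm:Lemma04} and notes that the local uniform boundedness of $\Delta f_I\alpha$ by a multiple of $\sqrt{\psi}$ carries through to $|I|=r+1$, which immediately gives the boundedness and Carath\'eodory property required by condition~\ref{item:def:Sys:iii}; no re-derivation is needed.

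One more small inaccuracy: in your treatment of condition~\ref{item:def:Sys:iii} on $\mathbb{R}\times M_{>}$ you claim that ``every factor is locally uniformly bounded'' in the expansion from \Cref{thm:Lemma03}. This is not true of the individual factors $e_{I\langle\tau[k]\rangle}(h_{i_k}\circ\psi)$: higher derivatives of $h_i$ multiplied by negative powers of $\psi$ can blow up as $\psi\downarrow 0$, and the whole point of \Cref{thm:Lemma03}'s proof is that the \emph{combined} products are bounded by the appropriate power of $\psi$ after a careful case analysis. You should cite \Cref{thm:Lemma03}~\ref{thm:Lemma03:A} for boundedness of $f_{iI}\alpha$ rather than assert boundedness factor by factor. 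With that correction and with the redundant re-derivation of \Cref{thm:Lemma04} removed, your proof matches the paper's.
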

\begin{proof}
We have to check that $(f_0,f_1,\ldots,f_m)$ satisfies properties~\ref{item:def:Sys:i},\ref{item:def:Sys:ii},\ref{item:def:Sys:iii} in \Cref{def:Sys}. Property~\ref{item:def:Sys:i} is trivially satisfied, since $f_0=e_0$. Let $I=(i_{\ell},\ldots,i_1)$ be a multi-index of length $0<\ell\leq{r+1}$ with $i_1,\ldots,i_{\ell}\in\{1,\ldots,m\}$, and let $\alpha\in{C^{\infty}(M)}$. We know from \Cref{thm:Lemma03,thm:Lemma04} that the iterated Lie derivative $f_I\alpha$ exists as a time-varying function on~$M$ and that~$\Delta{f_I\alpha}$ in~\cref{eq:Lemma04:proof:01} is locally uniformly bounded by a multiple of $\sqrt{\psi}$. Note that the product on the right-hand side of~\cref{eq:Lemma04:proof:01} is locally uniformly bounded and therefore the same is true for $f_I\alpha$. Now suppose that $\ell\leq{r}$. Then, we know from \Cref{thm:Lemma04}~\ref{thm:Lemma04:B} that $f_I\alpha$ is locally uniformly Lipschitz continuous. Moreover, from \Cref{thm:Lemma03}~\ref{thm:Lemma03:B} and \Cref{thm:Lemma04}~\ref{thm:Lemma04:C}, we derive that the time derivative $\partial_tf_I\alpha$ exists as a time-varying function on~$M$ and that
\begin{equation}\label{eq:Lemma05:01}
\Delta\partial_t(f_I\alpha) \ := \ \partial_t(f_I\alpha) - (\partial_t(e_{I}\alpha))\prod_{k=1}^{\ell}(h_{i_{k}}\circ\psi)
\end{equation}
is locally uniformly bounded by a multiple of $\sqrt{\psi}$. Since the product on the right-hand side in the above equation is locally uniformly bounded, the same is true for $\partial_t(f_I\alpha)$. To complete the proof of property~\ref{item:def:Sys:ii} in \Cref{def:Sys}, it remains to show that $\partial_t(f_I\alpha)$ is continuous as a function on $\mathbb{R}\times{M}$. This is true on $\mathbb{R}\times{M_{>}}$ because equations~\cref{eq:Lemma03:proof:02:A,eq:Lemma03:proof:02:B,eq:Lemma02:proof:01:A,eq:Lemma02:proof:01:B} show that $\partial_t(f_I\alpha)$ is a sum of products of continuous functions functions on $\mathbb{R}\times{M}$. It remains to prove continuity of $\partial_t(f_I\alpha)$ at points in $\mathbb{R}\times{E}$. The product on the right-hand side of~\cref{eq:Lemma05:01} is clearly continuous. Also $\Delta\partial_t(f_I\alpha)$ is continuous at points of $\mathbb{R}\times{E}$ because it is locally uniformly bounded by a multiple of $\sqrt{\psi}$. Thus, $\partial_t(f_I\alpha)$ is continuous as a function on $\mathbb{R}\times{M}$. A similar argument, using the local uniform boundedness of $\Delta{f_I\alpha}$ by a multiple of $\sqrt{\psi}$, implies that $f_I\alpha$ is a Carath{\'e}odory function for $|I|=r+1$, which completes the proof that $(f_0,f_1,\ldots,f_m)$ satisfies all properties in \Cref{def:Sys}.
\end{proof}
To complete the proof of \Cref{thm:outputConvergenceResult}, we have to show that the boundedness properties in \Cref{thm:convergenceResult} are satisfied. This is done in the next lemma.
\begin{lemma}\label{thm:Lemma06}
Let~$I$ be a multi-index of length $0<\ell\leq{r}$, and let $i\in\{1,\ldots,m\}$. Let $\alpha\in{C^{\infty}(M)}$ be constant on the fibers of $\pi$. Then the following holds.
\begin{enumerate}[label=(\alph*)]
	\item\label{thm:Lemma06:A} $f_i\alpha$, $f_if_I\alpha$, and $\partial_tf_I\alpha$ are locally uniformly bounded by a multiple of $\sqrt{\psi}$.
	\item\label{thm:Lemma06:B} $f_0\alpha$ and $f_0(f_I\alpha)$ are locally uniformly bounded (in the sense of \Cref{thm:boundedAE}).
	\item\label{thm:Lemma06:C} If the drift $e_0$ is tangent to the fibers of~$\pi$ or if $e_0$ locally uniformly bounded by a multiple of $\sqrt{\psi}$, then $f_0\alpha$ and $f_0(f_I\alpha)$ are locally uniformly bounded by a multiple of $\sqrt{\psi}$ (in the sense of \Cref{thm:boundedAE}).
\end{enumerate}
\end{lemma}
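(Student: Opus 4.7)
The plan is to derive all three statements from the multiplicative decomposition in \Cref{thm:Lemma03,thm:Lemma04}, combined with a case analysis on whether each index $i_k$ lies in regime~(1) of \Cref{thm:outputConvergenceResult} (so $e_{i_k}$ is tangent to the fibers of $\pi$ and $h_{i_k} \equiv 1$ on $(0,\infty)$) or regime~(2) (so $h_{i_k}(y) = O(\sqrt{y})$ as $y \downarrow 0$, which by continuity also forces $h_{i_k}(0) = 0$). The key dichotomy I will invoke repeatedly, for a multi-index $J = (j_\ell, \ldots, j_1)$, is the following: if every $j_k$ is in regime~(1), then the innermost $e_{j_1}$ is tangent to the fibers of $\pi$, and since $\alpha$ is constant on the fibers of $\pi$ we have $e_{j_1} \alpha \equiv 0$, hence $e_J \alpha \equiv 0$ identically; otherwise at least one factor of $\prod_k (h_{j_k} \circ \psi)$ is locally bounded by a multiple of $\sqrt{\psi}$, and since each of the remaining factors is either identically $1$ on $M_{>}$ or of the same square-root type, the full product is locally $O(\sqrt{\psi})$ on $M_{>}$.

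For part~(a), I apply \Cref{thm:Lemma03}~(a) to $J = (i, I)$ and \Cref{thm:Lemma03}~(b) to $J = I$, which yields
\[
f_i f_I \alpha \ = \ (e_{(i, I)} \alpha) \prod_{k=1}^{|I|+1}(h_{j_k} \circ \psi) \ + \ \Delta, \qquad \partial_t f_I \alpha \ = \ (\partial_t e_I \alpha) \prod_{k=1}^{|I|}(h_{i_k} \circ \psi) \ + \ \Delta'
\]
on $\mathbb{R} \times M_{>}$ with remainders $\Delta, \Delta'$ already locally bounded by a multiple of $\sqrt{\psi}$ on compact subsets of~$M$. The dichotomy bounds the leading terms by a multiple of $\sqrt{\psi}$ on $K \setminus E$ for every compact $K \subseteq M$, and \Cref{thm:Lemma04}~(a),(c) identify the boundary values on $K \cap E$ as $(e_J \alpha) \prod_k h_{j_k}(0)$, which again vanishes by the dichotomy and matches $\sqrt{\psi} = 0$. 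The claim for $f_i \alpha$ is the $I = \emptyset$ specialization. For part~(b), $f_0 \alpha = e_0 \alpha$ is locally uniformly bounded by condition~\ref{item:def:Sys:i} of \Cref{def:Sys}, while \Cref{thm:Lemma05} gives local uniform Lipschitz continuity of $f_I \alpha$, so $|f_0 f_I \alpha| \leq \operatorname{L}(f_0, f_I \alpha)$ wherever the Lie derivative exists, and since the absolutely continuous flow of the locally uniformly bounded $e_0$ moves at uniformly bounded coordinate speed on any compact set, $\operatorname{L}(f_0, f_I \alpha)$ is locally uniformly bounded.

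Part~(c) splits into two subcases. When $e_0$ is tangent to the fibers of $\pi$, the $\operatorname{L}$-analogue of the decomposition in \Cref{thm:Lemma03}~(d) combined with the boundary formula in \Cref{thm:Lemma04}~(d) and the same dichotomy yield the $\sqrt{\psi}$ bound on compact $K$. When $e_0$ is locally uniformly bounded by a multiple of $\sqrt{\psi}$, the integral identity
\[
\varphi(\Phi_0(t+s, t, x)) - \varphi(x) \ = \ \int_0^s (e_0 \varphi)(t + \sigma, \Phi_0(t + \sigma, t, x)) \, \mathrm{d}\sigma
\]
in any chart, combined with continuity of $\psi$ along the flow, gives $\limsup_{s \to 0} \|\varphi(\Phi_0(t+s, t, x)) - \varphi(x)\|/|s| \leq c \sqrt{\psi(x)}$; the local Lipschitz bound on $f_I \alpha$ then upgrades this to $\operatorname{L}(f_0, f_I \alpha)(t, x) \leq L c \sqrt{\psi(x)}$, and applying the same argument with $\alpha$ in place of $f_I \alpha$ handles $f_0 \alpha$. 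The main obstacle throughout is the careful bookkeeping at the interface $E = \psi^{-1}(0)$: the decompositions of \Cref{thm:Lemma03} hold only on $M_{>}$, so one must patch the bound on $K \setminus E$ to the boundary behavior on $K \cap E$ provided by \Cref{thm:Lemma04}, and the dichotomy is precisely what makes this patching possible by forcing either $e_J \alpha \equiv 0$ or $\prod_k h_{j_k}(0) = 0$ on $E$.
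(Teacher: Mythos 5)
Your proposal is correct and follows essentially the same route as the paper: both rest on the multiplicative decompositions of \Cref{thm:Lemma03,thm:Lemma04}, the same regime dichotomy (though the paper leaves it implicit in the phrase "since $\alpha$ is constant on the fibers of $\pi$, it follows that ...''), and, for the drift terms, the estimate $\operatorname{L}(f_0,f_I\alpha)\leq\|f_0\varphi\|\,\operatorname{Lip}_c(f_I\alpha)$ — which your integral-along-the-flow argument reproves from scratch — together with \Cref{thm:Lemma03}~\ref{thm:Lemma03:D} and \Cref{thm:Lemma04}~\ref{thm:Lemma04:D} for the $\pi$-tangent case. You spell out the case analysis and the patching across $E=\psi^{-1}(0)$ more explicitly than the paper does, but the underlying structure is identical.
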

\begin{proof}
Note that the function $\alpha$ is constant on the fibers of $\pi$. The asserted bounded properties of $f_0\alpha$ and $f_i\alpha$ are therefore clear. Next, we turn our attention to $\partial_tf_I\alpha$. We already know that $\Delta\partial_t(f_I\alpha)$ in~\cref{eq:Lemma05:01} is locally uniformly bounded by a multiple of $\sqrt{\psi}$. Since $\alpha$ is constant on the fibers of $\pi$, it follows that also the product on the right-hand side of~\cref{eq:Lemma05:01} is locally uniformly bounded by a multiple of $\sqrt{\psi}$. Thus, the same is true for $\partial_tf_I\alpha$. Using, \Cref{thm:Lemma03}~\ref{thm:Lemma03:A} and \Cref{thm:Lemma04}~\ref{thm:Lemma04:A}, a similar argument implies that also $f_if_I\alpha$ is locally uniformly bounded by a multiple of $\sqrt{\psi}$. It is left to prove the boundedness properties of $f_0(f_I\alpha)$ in parts~\ref{thm:Lemma06:B} and~\ref{thm:Lemma06:C}. Note that for every $(t,x)\in\mathbb{R}\times{M}$ at which $f_0(f_I\alpha)$ exists, we have
\[
|(f_0(f_I\alpha))(t,x)| \ = \ \operatorname{L}(f_0,f_I\alpha)(t,x)
\]
Fix an arbitrary chart $c=(U,\varphi)$ for~$M$. Let $f_0\varphi$ denote the componentwise Lie derivative of $\varphi$ along $f_0$, which is locally uniformly bounded by assumption on $f_0=e_0$. It follows from the definitions that
\[
\operatorname{L}(f_0,f_I\alpha) \ \leq \ \|f_0\varphi\|\,\operatorname{Lip}_c(f_I\alpha)
\]
on $\mathbb{R}\times{U}$. Since $f_I\alpha$ is locally uniformly Lipschitz continuous, we know that $\operatorname{Lip}_c(f_I\alpha)$ is locally uniformly bounded. Thus, $\operatorname{L}(f_0,f_I\alpha)$ is locally uniformly bounded as well. The above estimate also implies that $\operatorname{L}(f_0,f_I\alpha)$ is locally uniformly bounded by a multiple of $\sqrt{\psi}$ if $f_0$ is locally uniformly bounded by a multiple of $\sqrt{\psi}$. Finally, suppose that $f_0$ is tangent to the fibers of~$\pi$. To verify that $\operatorname{L}(f_0,f_I\alpha)$ is locally uniformly bounded by a multiple of $\sqrt{\psi}$, we use the estimate
\[
\operatorname{L}(f_0,f_I\alpha) \ \leq \ \Big|\operatorname{L}(f_0,f_I\alpha)-\operatorname{L}(f_0,e_{I}\alpha)\prod_{k=1}^{\ell}(h_{i_{k}}\circ\psi)\Big| + \operatorname{L}(f_0,e_{I}\alpha)\prod_{k=1}^{\ell}|h_{i_{k}}\circ\psi|.
\]
The first term on the right-hand side of the above inequality is locally uniformly bounded by a multiple of $\sqrt{\psi}$ by \Cref{thm:Lemma03}~\ref{thm:Lemma03:D} and \Cref{thm:Lemma04}~\ref{thm:Lemma04:D}. Since~$\alpha$ is tangent to the fibers of $\pi$, the second contribution on the right-hand side of the above inequality is also locally uniformly bounded by a multiple of $\sqrt{\psi}$. Thus, the same is true for $\operatorname{L}(f_0,f_I\alpha)$, which completes the proof of the lemma.
\end{proof}

\subsection{Proof of \texorpdfstring{\Cref{thm:ESCSinusoids}}{Proposition~\ref{thm:ESCSinusoids}}}\label{sec:ESCSinusoids}
We follow the proof of Theorem~5.1 in~\cite{Liu19972}. For each $k=1,\ldots,p$, we define two sets $\Omega(2k-1):=\Omega(2k):=\{\pm\omega_k\}$ and four $C^1$ maps $\eta_{\pm\omega_k,2k-1},\eta_{\pm\omega_k,2k}\colon\mathbb{R}\to\mathbb{C}$ by $\eta_{\pm\omega_k,2k-1}(t):=\sqrt{2\omega_k}\lambda_k(t)/2$ and $\eta_{\pm\omega_k,2k}(t):=\pm\sqrt{2\omega_k}/(2\mathrm{i})$, respectively. Then, we can write 
\[
u^j_{\ell}(t)=j^{\frac{1}{2}}\sum_{\omega\in\Omega(\ell)}\eta_{\omega,\ell}(t)\,\mathrm{e}^{\mathrm{i}j\omega{t}}
\]
for $\ell=1,\ldots,m$. Using integration by parts, we get
\[
\int_{t_0}^{t}u^j_{\ell}(s)\,\mathrm{d}s \ = \ j^{-\frac{1}{2}}\sum_{\omega\in\Omega(\ell)}\left(\vphantom{\prod^{.}}\right.\frac{\eta_{\omega,\ell}(t)\,\mathrm{e}^{\mathrm{i}j\omega{t}}}{\mathrm{i}\omega} - \frac{\eta_{\omega,\ell}(t_0)\,\mathrm{e}^{\mathrm{i}j\omega{t_0}}}{\mathrm{i}\omega} - \int_{t_0}^{t}\frac{\dot{\eta}_{\omega,\ell}(s)\,\mathrm{e}^{\mathrm{i}j\omega{s}}}{\mathrm{i}\omega}\,\mathrm{d}s\left.\vphantom{\prod^{.}}\right).
\]
Thus, by defining
\begin{align*}
v^j_{\ell}(t) & \ := \ -j^{-\frac{1}{2}}\sum_{\omega\in\Omega(\ell)}\frac{\dot{\eta}_{\omega,\ell}(t)\,\mathrm{e}^{\mathrm{i}j\omega{t}}}{\mathrm{i}\omega}, \allowdisplaybreaks \\
\widetilde{UV}\vphantom{UV}^j_{\ell}(t) & \ := \ -j^{-\frac{1}{2}}\sum_{\omega\in\Omega(\ell)}\frac{\eta_{\omega,\ell}(t)\,\mathrm{e}^{\mathrm{i}j\omega{t}}}{\mathrm{i}\omega},
\end{align*}
we ensure that $\dot{\widetilde{UV}}\vphantom{UV}^j_{\ell}=v^j_{\ell}-u^j_{\ell}$. If we multiply $u^j_{\ell}$ by $\widetilde{UV}\vphantom{UV}^j_{\ell'}$ and integrate we get
\[
\int_{t_0}^{t}u^j_{\ell}(s)\widetilde{UV}\vphantom{UV}^j_{\ell'}(s)\,\mathrm{d}s \ = \ -\sum_{(\omega,\omega')\in\Omega(\ell)\times\Omega(\ell')}\int_{t_0}^{t}\frac{\eta_{\omega,\ell}(s)\,\eta_{\omega',\ell'}(s)}{\mathrm{i}\omega'}\,\mathrm{e}^{\mathrm{i}j(\omega+\omega'){s}}\,\mathrm{d}s.
\]
It is straight forward to check that the terms with $\omega+\omega'=0$ in the above sum lead to $v_{\ell,\ell'}(t)$, i.e.,
\[
v_{\ell,\ell'}(t) \ = \ -\sum_{\substack{(\omega,\omega')\in\Omega(\ell)\times\Omega(\ell')\\\omega+\omega'=0}}\frac{\eta_{\omega,\ell}(t)\,\eta_{\omega',\ell'}(t)}{\mathrm{i}\omega'}.
\]
For the terms with $\omega+\omega'\neq0$, we apply again integration by parts and obtain
\begin{align*}
& \!\! \int_{t_0}^{t}\frac{\eta_{\omega,\ell}(s)\,\eta_{\omega',\ell'}(s)}{\mathrm{i}\omega'}\,\mathrm{e}^{\mathrm{i}j(\omega+\omega'){s}}\,\mathrm{d}s \ = \ -j^{-1}\int_{t_0}^{t}\frac{(\eta_{\omega,\ell}\eta_{\omega',\ell'})\dot{\vphantom{.}}(s)}{\mathrm{i}^2\omega'(\omega+\omega')}\,\mathrm{e}^{\mathrm{i}j(\omega+\omega'){s}}\,\mathrm{d}s \allowdisplaybreaks \\
& \qquad\quad + j^{-1}\left(\vphantom{\prod^{.}}\right.\frac{(\eta_{\omega,\ell}\eta_{\omega',\ell'})(t)}{\mathrm{i}^2\omega'(\omega+\omega')}\,\mathrm{e}^{\mathrm{i}j(\omega+\omega'){t}} - \frac{(\eta_{\omega,\ell}\eta_{\omega',\ell'})(t_0)}{\mathrm{i}^2\omega'(\omega+\omega')}\,\mathrm{e}^{\mathrm{i}j(\omega+\omega'){t_0}}\left.\vphantom{\prod^{.}}\right).
\end{align*}
To ensure $\dot{\widetilde{UV}}\vphantom{UV}^j_{\ell,\ell'}=v^j_{\ell,\ell'}-u^j_{\ell}\widetilde{UV}\vphantom{UV}^j_{\ell'}$, we define
\begin{align*}
v^j_{\ell,\ell'}(t) & \ := \ v_{\ell,\ell'}(t) + j^{-1}\sum_{\substack{(\omega,\omega')\in\Omega(\ell)\times\Omega(\ell')\\\omega+\omega'\neq0}}\frac{(\eta_{\omega,\ell}\eta_{\omega',\ell'})\dot{\vphantom{.}}(t)}{\mathrm{i}^2\omega'(\omega+\omega')}\,\mathrm{e}^{\mathrm{i}j(\omega+\omega'){t}} \allowdisplaybreaks \\
\widetilde{UV}\vphantom{UV}^j_{\ell,\ell'}(t) & \ := \ j^{-1}\sum_{\substack{(\omega,\omega')\in\Omega(\ell)\times\Omega(\ell')\\\omega+\omega'\neq0}}\frac{(\eta_{\omega,\ell}\eta_{\omega',\ell'})(t)}{\mathrm{i}^2\omega'(\omega+\omega')}\,\mathrm{e}^{\mathrm{i}j(\omega+\omega'){t}}.
\end{align*}
It is easy to verify that the above defined sequences of $v^j_I$ and $\widetilde{UV}\vphantom{UV}^j_I$ satisfy conditions~\ref{def:GDConvergence:1}-\ref{def:GDConvergence:3} in \Cref{def:GDConvergence} with respect to the $u^j_i$ and $v^j_I$ for $r=2$.

\subsection{Proof of \texorpdfstring{\Cref{thm:unicycleSinusoids}}{Proposition~\ref{thm:unicycleSinusoids}}}\label{sec:unicycleSinusoids}
For each $\nu=1,\ldots,N$, and $k=1,2,3$ we define the set $\Omega(3(\nu-1)+k):=\{\pm\omega_{\nu,k}\}$ and complex-valued constants
\begin{align*}
\eta_{\omega_{\nu,1}} & \ := \ \frac{\omega_{\nu,1}^{3/4}}{2}, & \eta_{\omega_{\nu,2}} & \ := \ \frac{\omega_{\nu,2}^{3/4}}{2\,\mathrm{i}} , & \eta_{\omega_{\nu,3}} & \ := \ {2^{13/8}}\,\frac{\omega_{\nu,3}^{3/4}}{2}, \allowdisplaybreaks \\
\eta_{-\omega_{\nu,1}} & \ := \ \frac{\omega_{\nu,1}^{3/4}}{2}, & \eta_{-\omega_{\nu,2}} & \ := \ -\frac{\omega_{\nu,2}^{3/4}}{2\,\mathrm{i}}, & \eta_{-\omega_{\nu,3}} & \ := \ {2^{13/8}}\,\frac{\omega_{\nu,3}^{3/4}}{2}
\end{align*}
Then, we can write
\[
u^j_{\ell}(t) \ = \ j^{\frac{3}{4}}\sum_{\omega\in\Omega(\ell)}\eta_{\omega,\ell}\,\mathrm{e}^{\mathrm{i}j\omega{t}}
\]
for $\ell=1,\ldots,m$. As a preparation for the subsequent calculations, we prove that for all $\ell_i\in\{1,\ldots,m\}$ and $\omega_i\in\Omega(\ell_i)$ with $i=1,2,3,4$, the following properties hold.
\begin{enumerate}[label=(\roman*)]
	\item\label{item:proofUnicycle:i} We always have $\omega_1\neq0$.
	\item\label{item:proofUnicycle:ii} If $\omega_1+\omega_2=0$, then $\ell_1=\ell_2$.
	\item\label{item:proofUnicycle:iii} We always have $\omega_1+\omega_2+\omega_3\neq0$.
	\item\label{item:proofUnicycle:iv} We have $\omega_1+\omega_2+\omega_3+\omega_4=0$ if and only if
	\begin{enumerate}[label=(\Alph*)]
		\item \label{item:proofUnicycle:iv:A} each $\omega_{i}$, $i=1,2,3,4$, is canceled out by its negative $-\omega_{i}$ (this case is referred to as \emph{pure cancelation by pairs} in~\cite{Liu19972}); or
		\item \label{item:proofUnicycle:iv:B} there is $\nu\in\{1,\ldots,N\}$ such that $(\omega_1,\omega_2,\omega_3,\omega_4)$ is either a permutation of $(\omega_{\nu,1},\omega_{\nu,2},-\omega_{\nu,3},-\omega_{\nu,3})$ or a permutation of $(-\omega_{\nu,1},-\omega_{\nu,2},\omega_{\nu,3},\omega_{\nu,3})$.
	\end{enumerate}
\end{enumerate}
Property~\ref{item:proofUnicycle:i} is clear. Property~\ref{item:proofUnicycle:ii} follows from the fact that the $\omega_{\nu,a}$, $\nu=1,\ldots,N$, $a=1,2,3$, are pairwise distinct. We prove the remaining properties~\ref{item:proofUnicycle:iii} and~\ref{item:proofUnicycle:iv} by means of a result from number theory. Recall that a positive integer is said to be \emph{square free} if in its prime factorization no prime factor occurs with an exponent larger than one. A finite sequence $x_1,\ldots,x_n$ of real numbers is said to be \emph{linearly independent} over the set of integers $\mathbb{Z}$ if $\lambda_1x_1+\cdots+\lambda_nx_n=0$ with $\lambda_1,\ldots,\lambda_n\in\mathbb{Z}$ is only satisfied for $\lambda_1=\cdots{\lambda_n}=0$. In the following, we will use the known fact (see~\cite{Besicovitch1940}) that square roots of pairwise distinct square free integers $>1$ are linearly independent over $\mathbb{Z}$. Note that the numbers $\kappa_2,2\kappa_2,\kappa_3,2\kappa_3,\ldots$ are pairwise distinct square free integers $>1$. Moreover each $\omega\in\Omega(\ell)$ is of the form $\omega=a\sqrt{\kappa_{\nu+1}}+b\sqrt{2\kappa_{\nu+1}}$ for some $\nu\in\{1,\ldots,N\}$ and for some $(a,b)$ from the set $\Lambda:=\{\pm(3,2),\pm(1,0),\pm(2,1)\}$. To prove~\ref{item:proofUnicycle:iii}, we write each $\omega_i$ uniquely as $\omega_i=a_i\sqrt{\kappa_{\nu_i+1}}+b_i\sqrt{2\kappa_{\nu_i+1}}$ with $(a_i,b_i)\in\Lambda$. If the numbers $\nu_1,\nu_2,\nu_3$ are not all equal, then $\omega_1+\omega_2+\omega_3$ is clearly a non\-trivial linear combination of the $\sqrt{\kappa_{\nu_i+1}},\sqrt{2\kappa_{\nu_i+1}}$ and therefore non\-zero. Otherwise, i.e., if the numbers $\nu_1,\nu_2,\nu_3$ are all equal, then $\omega_1+\omega_2+\omega_3=0$ would imply $a_1+a_2+a_3=0$ and $b_1+b_2+b_3=0$. However, it is easy to check that these equations have no solutions for $(a_i,b_i)\in\Lambda$. It remains to prove property~\ref{item:proofUnicycle:iv}. Suppose that $\omega_1+\omega_2+\omega_3+\omega_4=0$ and that condition~\ref{item:proofUnicycle:iv:A} of pure cancelation by pairs is not satisfied. Thus, we have to show that we are in the situation of case~\ref{item:proofUnicycle:iv:B}. Again we write $\omega_i=a_i\sqrt{\kappa_{\nu_i+1}}+b_i\sqrt{2\kappa_{\nu_i+1}}$ with $(a_i,b_i)\in\Lambda$. Since we have ruled out pure cancelation by pairs, the linear independence of the $\sqrt{\kappa_{\nu_i+1}},\sqrt{2\kappa_{\nu_i+1}}$ implies that the numbers $\nu_1,\ldots,\nu_4$ have to be all equal. This in return implies that $a_1+\cdots+a_4=0$ and $b_1+\cdots+b_4=0$. It is straight forward to check that, under the assumption of no canceling by pairs, there are exactly~12 solutions of $a_1+\cdots+a_4=0$ and $b_1+\cdots+b_4=0$ with $(a_i,b_i)\in\Lambda$, namely when $((a_1,b_1),\ldots,(a_4,b_4))$ is either a permutation of $(+(3,2),+(1,0),-(2,1),-(2,1))$ or a permutation of $(-(3,2),-(1,0),+(2,1),+(2,1))$. These 12 solutions coincide with the 12 possible assignments to $(\omega_1,\omega_2,\omega_3,\omega_4)$ in~\ref{item:proofUnicycle:iv:B}.

Now we can apply the same procedure as in the proof of Theorem~5.1 in~\cite{Liu19972}. For a single index $\ell\in\{1,\ldots,m\}$, we compute
\[
\int_{t_0}^{t}u^j_{\ell}(s)\,\mathrm{d}s \ = \ \int_{t_0}^{t}v^j_{\ell}(s)\,\mathrm{d}s - \big(\widetilde{UV}\vphantom{UV}^j_{\ell}(t)-\widetilde{UV}\vphantom{UV}^j_{\ell}(t_0)\big)
\]
with $v^j_{\ell}:=v_{\ell}\equiv0$ and
\[
\widetilde{UV}\vphantom{UV}^j_{\ell}(t) \ := \ -j^{-\frac{1}{4}}\sum_{\omega\in\Omega(\ell)}\frac{\eta_{\omega,\ell}\,\mathrm{e}^{\mathrm{i}j\omega{t}}}{\mathrm{i}\omega},
\]
because of property~\ref{item:proofUnicycle:i}.

Fix an arbitrary multi-index $I=(\ell_1,\ell_2)$ with $\ell_1,\ell_2\in\{1,\ldots,m\}$. If we multiply $u^j_{\ell_1}$ by $\widetilde{UV}\vphantom{UV}^j_{\ell_2}$ and integrate we get
\[
\int_{t_0}^{t}u^j_{\ell_1}(s)\widetilde{UV}\vphantom{UV}^j_{\ell_2}(s)\,\mathrm{d}s \ = \ -j^{\frac{2}{4}}\sum_{(\omega_1,\omega_2)\in\Omega(\ell_1)\times\Omega(\ell_2)}\int_{t_0}^{t}\frac{\eta_{\omega_1,\ell_1}\,\eta_{\omega_2,\ell_2}}{\mathrm{i}\omega_2}\,\mathrm{e}^{\mathrm{i}j(\omega_1+\omega_2){s}}\,\mathrm{d}s.
\]
Let $(\omega_1,\omega_2)\in\Omega(\ell_1)\times\Omega(\ell_2)$. Then, also $(-\omega_1,-\omega_2)\in\Omega(\ell_1)\times\Omega(\ell_2)$. If $\omega_1+\omega_2=0$, then property~\ref{item:proofUnicycle:ii} implies $\ell_1=\ell_2$, and therefore $\frac{\eta_{\omega_1,\ell_1}\,\eta_{\omega_2,\ell_2}}{\mathrm{i}\omega_2}=\frac{|\eta_{\omega_2,\ell_2}|^2}{\mathrm{i}\omega_2}$ and $\frac{\eta_{-\omega_1,\ell_1}\,\eta_{-\omega_2,\ell_2}}{-\mathrm{i}\omega_2}=-\frac{|\eta_{\omega_2,\ell_2}|^2}{\mathrm{i}\omega_2}$. Thus, all terms with $\omega_1+\omega_2=0$ in the above sum add up to $0$, and therefore the summation reduces to the set $\Omega(I)$ of all $(\omega_1,\omega_2)\in\Omega(\ell_1)\times\Omega(\ell_2)$ with $\omega_1+\omega_2\neq0$. We compute the remaining integrals and obtain
\[
\int_{t_0}^{t}u^j_{\ell_1}(s)\widetilde{UV}\vphantom{UV}^j_{\ell_2}(s)\,\mathrm{d}s \ = \ \int_{t_0}^{t}v^j_{I}(s)\,\mathrm{d}s - \big(\widetilde{UV}\vphantom{UV}^j_{I}(t)-\widetilde{UV}\vphantom{UV}^j_{I}(t_0)\big)
\]
with $v^j_{I}:=v_{I}\equiv0$ and
\[
\widetilde{UV}\vphantom{UV}^j_{I}(t) \ := \ (-1)^2j^{-\frac{2}{4}}\sum_{\hat{\omega}\in\Omega(I)}\frac{\eta_{\hat{\omega},I}}{\mathrm{i}^2h(\hat{\omega})}\,\mathrm{e}^{\mathrm{i}j(\sum\hat{\omega}){t}},
\]
where we have used the abbreviations $\eta_{\hat{\omega},I}:=\eta_{\omega_1,\ell_1}\eta_{\omega_2,\ell_2}$, $h(\hat{\omega}):=\omega_2(\omega_2+\omega_1)$, and $\sum\hat{\omega}:=\omega_1+\omega_2$ for $\hat{\omega}=(\omega_1,\omega_1)\in\Omega(I)$.

Fix an arbitrary multi-index $I=(\ell_1,\ell_2,\ell_3)$ with $\ell_1,\ell_2,\ell_3\in\{1,\ldots,m\}$ and write $\bar{I}:=(\ell_2,\ell_3)$. Multiplying $u^j_{\ell_1}$ by $\widetilde{UV}\vphantom{UV}^j_{\bar{I}}$ and integrating we get
\[
\int_{t_0}^{t}u^j_{\ell_1}(s)\widetilde{UV}\vphantom{UV}^j_{\bar{I}}(s)\,\mathrm{d}s \ = \ (-1)^2j^{\frac{1}{4}}\sum_{(\omega_1,\hat{\omega})\in\Omega(\ell_1)\times\Omega(\bar{I})}\int_{t_0}^{t}\frac{\eta_{\omega_1,\ell_1}\,\eta_{\hat{\omega},\bar{I}}}{\mathrm{i}^2h(\hat{\omega})}\,\mathrm{e}^{\mathrm{i}j(\omega_1+\sum\hat{\omega}){s}}\,\mathrm{d}s.
\]
By definition of~$\Omega(\bar{I})$ and because of property~\ref{item:proofUnicycle:iii}, the set $\Omega(\ell_1)\times\Omega(\bar{I})$ in the above sum can be identified with the set $\Omega(I)$ of all $\hat{\omega}=(\omega_1,\omega_2,\omega_3)$ in $\Omega(\ell_1)\times\Omega(\ell_2)\times(\Omega_3)$ with $h(\hat{\omega}):=\omega_3(\omega_3+\omega_2)(\omega_3+\omega_2+\omega_1)\neq0$. We compute the integrals and obtain
\[
\int_{t_0}^{t}u^j_{\ell_1}(s)\widetilde{UV}\vphantom{UV}^j_{\bar{I}}(s)\,\mathrm{d}s \ = \ \int_{t_0}^{t}v^j_{I}(s)\,\mathrm{d}s - \big(\widetilde{UV}\vphantom{UV}^j_{I}(t)-\widetilde{UV}\vphantom{UV}^j_{I}(t_0)\big)
\]
with $v^j_{I}:=v_{I}\equiv0$ and
\[
\widetilde{UV}\vphantom{UV}^j_{I}(t) \ := \ (-1)^3j^{-\frac{3}{4}}\sum_{\hat{\omega}\in\Omega(I)}\frac{\eta_{\hat{\omega},I}}{\mathrm{i}^3h(\hat{\omega})}\,\mathrm{e}^{\mathrm{i}j(\sum\hat{\omega}){t}},
\]
where we have used the abbreviations $\eta_{\hat{\omega},I}:=\eta_{\omega_1,\ell_1}\eta_{\omega_2,\ell_2}\eta_{\omega_3,\ell_3}$ and $\sum\hat{\omega}:=\omega_1+\omega_2+\omega_3$ for $\hat{\omega}=(\omega_1,\omega_2,\omega_3)\in\Omega(I)$.

Fix an arbitrary multi-index $I=(\ell_1,\ldots,\ell_4)$ with $\ell_1,\ldots,\ell_4\in\{1,\ldots,m\}$ and write $\bar{I}:=(\ell_2,\ell_3,\ell_4)$. Multiplying $u^j_{\ell_1}$ by $\widetilde{UV}\vphantom{UV}^j_{\bar{I}}$ and integrating we get
\[
\int_{t_0}^{t}u^j_{\ell_1}(s)\widetilde{UV}\vphantom{UV}^j_{\bar{I}}(s)\,\mathrm{d}s \ = \ (-1)^3j^{0}\sum_{(\omega_1,\hat{\omega})\in\Omega(\ell_1)\times\Omega(\bar{I})}\int_{t_0}^{t}\frac{\eta_{\omega_1,\ell_1}\,\eta_{\hat{\omega},\bar{I}}}{\mathrm{i}^3h(\hat{\omega})}\,\mathrm{e}^{\mathrm{i}j(\omega_1+\sum\hat{\omega}){s}}\,\mathrm{d}s.
\]
We let $\Omega(I)$ be the set of all $\hat{\omega}=(\omega_1,\ldots,\omega_4)$ in $\Omega(\ell_1)\times\cdots\times(\Omega_4)$ with $h(\hat{\omega}):=\omega_4(\omega_4+\omega_3)\cdots(\omega_4+\omega_3+\omega_2+\omega_1)\neq0$. Again, we introduce the abbreviations $\eta_{\hat{\omega},I}:=\eta_{\omega_1,\ell_1}\cdots\eta_{\omega_4,\ell_4}$ and $\sum\hat{\omega}:=\omega_1+\cdots+\omega_4$ for $\hat{\omega}=(\omega_1,\ldots,\omega_4)\in\Omega(I)$. Then, we can write the above integral as
\[
\int_{t_0}^{t}u^j_{\ell_1}(s)\widetilde{UV}\vphantom{UV}^j_{\bar{I}}(s)\,\mathrm{d}s \ = \ \int_{t_0}^{t}v^j_{I}(s)\,\mathrm{d}s - \big(\widetilde{UV}\vphantom{UV}^j_{I}(t)-\widetilde{UV}\vphantom{UV}^j_{I}(t_0)\big)
\]
with
\begin{align*}
v^j_I(t) & \ := \ (-1)^3j^{0}\sum_{\substack{(\omega_1,(\omega_2,\omega_3,\omega_4))\in\Omega(\ell_1)\times\Omega(\bar{I})\\\omega_1+\omega_2+\omega_3+\omega_4=0}}\frac{\eta_{\omega_1,\ell_1}\,\eta_{\omega_2,\ell_2}\,\eta_{\omega_3,\ell_3}\,\eta_{\omega_4,\ell_4}}{\mathrm{i}^3\omega_4(\omega_4+\omega_3)(\omega_4+\omega_3+\omega_2)}, \allowdisplaybreaks \\
\widetilde{UV}\vphantom{UV}^j_{I}(t) & \ := \ (-1)^4j^{-1}\sum_{\hat{\omega}\in\Omega(I)}\frac{\eta_{\hat{\omega},I}}{\mathrm{i}^4h(\hat{\omega})}\,\mathrm{e}^{\mathrm{i}j(\sum\hat{\omega}){t}}.
\end{align*}
We show that $v^j_I(t)=v_I(t)$ with $v_I(t)$ defined as in~\Cref{thm:unicycleSinusoids}. We start with the case in which the set of $(\omega_1,(\omega_2,\omega_3,\omega_4))\in\Omega(\ell_1)\times\Omega(\bar{I})$ with $\omega_1+\omega_2+\omega_3+\omega_4=0$ is not empty. We know from property~\ref{item:proofUnicycle:iv} that $\omega_1+\omega_2+\omega_3+\omega_4=0$ holds if and only if either condition~\ref{item:proofUnicycle:iv:A} or condition~\ref{item:proofUnicycle:iv:B} is satisfied. First, consider the case in which some $(\omega_1,\omega_2,\omega_3,\omega_4)$ satisfies condition~\ref{item:proofUnicycle:iv:A}. Then, there are $i,i'\in\{1,2,3,4\}$ and $\ell,\ell'\in\{1,\ldots,m\}$ such that
\[
\eta_{\omega_1,\ell_1}\,\eta_{\omega_2,\ell_2}\,\eta_{\omega_3,\ell_3}\,\eta_{\omega_4,\ell_4} \ = \ |\eta_{\omega_{i},\ell}|^2\,|\eta_{\omega_{i'},\ell'}|^2.
\]
Note that also $(-\omega_1,(-\omega_2,-\omega_3,-\omega_4))\in\Omega(\ell_1)\times\Omega(\bar{I})$ and
\[
\eta_{-\omega_1,\ell_1}\,\eta_{-\omega_2,\ell_2}\,\eta_{-\omega_3,\ell_3}\,\eta_{-\omega_4,\ell_4} \ = \ |\eta_{\omega_{i},\ell}|^2\,|\eta_{\omega_{i'},\ell'}|^2.
\]
It follows that terms in the sum on the right-hand side of~$v^j_I(t)$ for which condition~\ref{item:proofUnicycle:iv:A} holds cancel each other. Thus, the only possibly non\-vanishing contribution comes from terms with $(\omega_1,\omega_2,\omega_3,\omega_4)$ which satisfy condition~\ref{item:proofUnicycle:iv:B}. In this case, there exists $\nu\in\{1,\ldots,N\}$ such that $(\omega_1,\omega_2,\omega_3,\omega_4)$ is either a permutation of $(\omega_{\nu,1},\omega_{\nu,2},-\omega_{\nu,3},-\omega_{\nu,3})$ or a permutation of $(-\omega_{\nu,1},-\omega_{\nu,2},\omega_{\nu,3},\omega_{\nu,3})$. Thus, $I=(k_1,k_2,k_3,k_4)_{\nu}$, where $(k_1,k_2,k_3,k_4)_{\nu}$ is given by~\cref{eq:unicycleSinusoids:04} and $(k_1,k_2,k_3,k_4)$ is a permutation of $(1,2,3,3)$. A direct computation for the 12~permutations $(k_1,k_2,k_3,k_4)$ of $(1,2,3,3)$ shows that
\[
v^j_{(k_1,k_2,k_3,k_4)_{\nu}}(t) \ = \ v_{(k_1,k_2,k_3,k_4)_{\nu}}(t)
\]
with $v_{(k_1,k_2,k_3,k_4)_{\nu}}(t)$ as in~\cref{eq:unicycleSinusoids:03}. If there is no $(\omega_1,(\omega_2,\omega_3,\omega_4))\in\Omega(\ell_1)\times\Omega(\bar{I})$ for which $\omega_1+\omega_2+\omega_3+\omega_4=0$ holds, then $I$ is not of the form $(k_1,k_2,k_3,k_4)_{\nu}$ with $(k_1,k_2,k_3,k_4)$ being a permutation of $(1,2,3,3)$, and therefore $v^j_I(t)=0=v_I(t)$.

It is easy to verify that the above defined sequences of $v^j_I$ and $\widetilde{UV}\vphantom{UV}^j_I$ satisfy conditions~\ref{def:GDConvergence:1}-\ref{def:GDConvergence:3} in \Cref{def:GDConvergence} with respect to the $u^j_i$ and $v^j_I$ for $r=4$.

\section*{Acknowledgments}
The author would like to thank Professor U.~Helmke for proposing to him the topic.

\bibliographystyle{abbrv}
\bibliography{../../bibFile}
\end{document}